\numberwithin{equation}{section}
\newtheorem{Theorem}{Theorem}[section]
\newtheorem{Lemma}[Theorem]{Lemma}
\newtheorem{proposition}[Theorem]{Proposition}
\newtheorem{algorithm}{Algorithm}
\newtheorem{remark}[Theorem]{Remark}
\numberwithin{equation}{section}
\newtheorem{example}{Example}[section]
\def \Vh0{\stackrel{\circ}{V}_h}
\newcommand{\lc}
{\mathrel{\raise2pt\hbox{${\mathop<\limits_{\raise1pt\hbox
{\mbox{$\sim$}}}}$}}}
\newcommand{\gc}
{\mathrel{\raise2pt\hbox{${\mathop>\limits_{\raise1pt\hbox{\mbox{$\sim$}}}}$}}}
\newcommand{\ec}
{\mathrel{\raise2pt\hbox{${\mathop=\limits_{\raise1pt\hbox{\mbox{$\sim$}}}}$}}}
\def\bb{\begin{equation}} \def\ee{\end{equation}}
\def\beqn{\begin{eqnarray}}  \def\eqn{\end{eqnarray}}
\def\beqnx{\begin{eqnarray*}} \def\eqnx{\end{eqnarray*}}
\def\bn{\begin{enumerate}} \def\en{\end{enumerate}}
\def\bd{\begin{description}} \def\ed{\end{description}}
\newenvironment{figurehere}
  {\def\@captype{figure}}
  {}
\title{Algorithm for Hamilton-Jacobi equations in density space via a generalized Hopf formula} %from optimal transport and mean field games}
\date{}
\begin{document}

\author{Yat Tin Chow\footnote{Department of Mathematics, UCLA, Los Angeles, CA 90095-1555  (ytchow@math.ucla.edu, wcli@math.ucla.edu, sjo@math.ucla.edu, wotaoyin@math.ucla.edu). Research supported by AFOSR MURI proposal number 18RT0073, ONR grant: N00014-1-0444, N00014-16-1-2119, N00014-16-215-1, NSF grant ECCS-1462398 and DOE grant DE-SC00183838.
}
\and
Wuchen Li\footnotemark[1]
\and
Stanley Osher\footnotemark[1]
\and
Wotao Yin\footnotemark[1]
}
%\providecommand{\keywords}[1]{\textbf{\textit{Keywords---}} #1}
%\begin{keywords}{Hamilton-Jacobi equation in density space; Mean field games; Optimal transport; Generalized Hopf formula.}\end{keywords}

\maketitle
\begin{abstract}
We design fast numerical methods for Hamilton-Jacobi equations in density space (HJD), which arises in optimal transport and mean field games. We overcome the curse-of-infinite-dimensionality nature of HJD by proposing a generalized Hopf formula\footnote{We drop the word ``generalized'' in what follows.} in density space. The formula transfers optimal control problems in density space, which are constrained minimizations supported on both spatial and time variables, to optimization problems over only spatial variables. 
This transformation allows us to compute  HJD efficiently via multi-level approaches and coordinate descent methods. 
\end{abstract}

\textbf{Keywords}: Hamilton-Jacobi equation in density space; Generalized Hopf formula; Mean field games; Optimal transport.

\section{Introduction}
In recent years, optimal control problems in density space have started to play vital roles in physics \cite{Nelson}, fluid dynamics \cite{BB} and probability \cite{PMFG1}. Two typical examples are mean field games (MFGs) \cite{CAIN, Hopf_MFG} and optimal transportation \cite{vil2008}. For these optimal control problems,  Hamilton-Jacobi equation in density space (HJD) determines the global information of the system \cite{WHJB, WHJB1}, which describes the time evolution of the optimal value in density space. More precisely, HJD refers to the functional differential equation as follows:
Let $x\in X$, and $\rho(\cdot)\in \mathcal{P}(X)$ represent the probability density space supported on $X$.  Let $U\colon [0, \infty)\times \mathcal{P}(X)\rightarrow \mathbb{R}$ be the value function. Consider
\begin{equation*}
\begin{cases}
\partial_s U(s,\rho) + \mathcal{H}(\rho, \delta_{\rho} U) = 0 \\
U(0,\rho)= G(\rho),
\end{cases}
\end{equation*}
where $\delta_\rho$ is the $L^2$ first variation w.r.t. $\rho$ and $\mathcal{H}$ represents the total Hamiltonian function in $\mathcal{P}(X)$:
\begin{equation*}
\mathcal{H}(\rho,\delta_{\rho(x)} U):=\int_{X} H(x, \nabla_x\delta_{\rho(x)}U)\rho(x)dx +F(\rho),
\end{equation*}
with the given Hamiltonian function $H$ on $X$. Here, $F$, $G\colon \mathcal{P}(X)\rightarrow \mathbb{R}$ are given interaction potential and initial cost functional in density space, respectively. 

In applications, HJD has  been shown very effective at modeling population differential games, also known as MFGs, which study strategic dynamical interactions in large populations by extending finite players' differential games. This setting provides powerful tools for modeling macro-economics, stock markets, and wealth distribution \cite{MFGA}. In this setting, a Nash equilibrium (NE) describes a status in which no individual player in the population is willing to change his/her strategy unilaterally. A widely-studied special class of MFG is the potential game \cite{PMFG}, where all players face the same cost function or potential, and every player minimizes this potential. This amounts to solving an optimal control problem in density space. In this case, a NE refers to the characteristics of HJD, which form a PDE system consisting of continuity equation and Hamilton-Jacobi equation in $X$. These two equations represent the dynamical evolutions of the population density and the cost value, respectively. 

Despite the importance of HJD, solving it numerically is not a simple task. It is known that computing Hamilton-Jacobi equations using a grid in a dimension greater than or equal to three is difficult.  The cost increases exponentially with the dimension, which is known as the curse of dimensionality \cite{JO}. HJD is even harder to compute since it involves  an infinite-dimensional functional PDE. In this paper, expanding the ideas in \cite{Hopf_Lax_4, Hopf_Lax_2, Hopf_Lax_3, JO}, we overcome the curse of infinite dimensionality in HJD by exploiting a Hopf formula in density space. 
This approach considers a particular primal-dual formulation associated with the optimal control problem in density space. Specifically, the Hopf formula is given as 
\begin{eqnarray*}
U(t,\rho) 
&:=& \sup_{\Phi_t} \, \Bigg \{  \int_X 
 \rho_t \Phi_t  dx -   \int_0^t \left(  F(\rho_s) - \int_X 
 \rho_s\delta_{\rho_s} F(\rho_s)dx \right) ds -  G^*(  \Phi_0)   :  \notag \\
& &
\hspace{2.5cm} \begin{matrix} 
&\partial_s \rho_s = \delta_{\Phi_s} \mathcal{H} (\rho_s, \Phi_s),\quad \partial_s \Phi_s = - \delta_{\rho_s} \mathcal{H} (\rho_s, \Phi_s)\,\\
&   \rho(x,t) = \rho_t(x),\quad \Phi(x,t) = \Phi_t(x) \\
\end{matrix}
 \Bigg\},
\end{eqnarray*}
where $\Phi_0(x)=\Phi(0,x)$ and 
$$G^*(\Phi_0):=\sup_{\rho_0\in\mathcal{P}(X)}~\int_X\rho_0\Phi_0dx -G(\rho_0).$$
%and  $\delta$ is the $L^2$ first variation. 
We further discretize the above variational problem following the same discretization as in optimal transport on graphs \cite{Li2, Li-SE, Li1, Li4, LiFisher}. We then apply a multi-level block stochastic gradient descent method to optimize the discretized problem.

In the literature of numerical methods for potential MFGs are seminal works of Achdou, Camilli, and Dolcetta \cite{MFG1, MFG2, MFG}. Their approaches utilize the primal-dual structure of the optimal control formulation,  simplifying it by a Legendre transform and applying Newton's method to the resulting saddle point system. Different from their approaches, we focus on solving the dual problem, in which the optimal control problem is an optimization problem over the terminal adjoint state $\Phi(x): = \Phi(x,t)$, satisfying the MFG system. Since this is a functional of a single variable, many optimization techniques for high-dimensional problems can be applied, for example, coordinate gradient descent methods. Also, numerical methods for special cases of potential games were introduced in \cite{NN}. They transform the optimal control problem into a regularized linear program. Unlike these methods, our methods can be applied to general Lagrangians for optimal control problems in density space. Yet another well-known line of research focuses on  stationary MFG systems \cite{BC1, SMFG}, for which proximal splitting methods have been used. They are different from our focus on time-dependent MFGs. 

{ The Hopf maximization principle gives us an optimal balance between the indirect method (Pontryagin's maximum principle), e.g. the well-known MFG system (49)-(51) below in \cite{Hopf_MFG}, and the direct method (optimization over the spaces of curves), e.g. the primal-dual formulation in \cite{MFG1, MFG2, MFG} and Hopf formula (57)-(59) in \cite{Hopf_MFG}. This balance leads to computational efficiency.  
There are several existing formulations for solving HJD numerically: 
 (i) the original formulation in \eqref{2}-\eqref{211} or its resulting (primal) Lagrangian  formulation, (ii) the intermediate primal-dual formulation in \cite{MFG1, MFG2, MFG}, (iii) the dual formulation (Hopf formula) \eqref{lala_hopf} in (57)-(59) in \cite{Hopf_MFG},  (iv) the resulting KKT optimality condition \eqref{1} (the MFG system (49)-(51) in \cite{Hopf_MFG}), and (v) the proposed Hopf formulation \eqref{Smart} in this paper. Under suitable conditions, the {five} formulations are equivalent, but their effects on computation are different. Formulations (i), (ii), and (iii)  involve a large number of variables, which lead to high complexities on problems with high dimensions and long time intervals;  (iv) is a forward-backward system that needs different numerical methods.} Our approach (v) is a balance between the indirect and direct methods and reduces the number of variables to a single terminal adjoint state $\Phi_t(x) := \Phi(x, t)$.  The memory requirement is thus greatly reduced in our approach. On the other hand, we keep a variable and a functional such that our algorithm produces a descending sequence  that  converges to a local minimum. 

We utilize the coordinate descent method, which avoids the difficulties coming from a nonsmooth functional.
We remark that the proposed approach can handle Hamiltonians of homogeneous degree $1$, which can be used as a mean-field level set approach for the reachability problem. 
Moreover, we choose the Hopf formulation to handle the case where the Hamiltonian $H$ is non-convex.  {We propose to check the computed limit (i.e., whether it is a global minimum) via the condition $\Phi(x,0) \in \partial G (\rho (x,0))$.} 

The rest of this paper is organized as follows. In Section 2, we briefly review potential MFGs and related HJD and formally derive the Hopf formula in density space.   
We also propose a rigorous approach on discrete grid approximations of optimal control problems and show the validity of the Hopf formula under proper assumptions. In Section 4, we design a fast multi-level random coordinate descent method for solving the discrete Hopf formula that we obtained in Section 3.  Several numerical examples are presented in Section 5 to illustrate the effectiveness of the proposed algorithm.
\section{Hopf formula in mean field games}
In this section, we briefly review potential MFGs. They are related to optimal control problems in density space, which induce Hamilton-Jacobi equations in density space. We  propose the Hopf formula in density space for subsequent numerical computation.  
\subsection{Potential mean field games}
 Consider a differential game played by one population, which contains countably infinitely many agents. Each agent selects a pure strategy from a strategy set $X$, which is a $d$-dimensional torus. The aggregated state of the population can be described by the population state $\rho(x)\in\mathcal{P}(X)=\big\{\rho(\cdot)\colon \int_{X}\rho(x)dx=1,~\rho(x)\geq 0\big\}$, where $\rho(x)$ represents the population density of players choosing strategy $x\in X$. 
The game assumes that each player�'s cost is independent of his/her identity (autonomous game). In a differential game, each agent plays the game dynamically facing the same Lagrangian $L\colon X\times TX\rightarrow \mathbb{R}$, where $TX$ represents the tangent space of $X$. The term ``mean field'' makes sense when each player's potential energy $f$ and terminal cost $g$ rely on  mean-field quantities of all other players' choices, mathematically written as $f, g\colon X\times \mathcal{P}(X)\rightarrow \mathbb{R}$.

The Nash equilibrium (NE) describes a status in which no player in population is willing to change his/her strategy unilaterally. In a MFG, it is represented as a primal-dual dynamical system:   
\beqn
\begin{cases}
&\partial_s \rho(x,s)+ \nabla_x \cdot ( \rho(x,s) \, D_p H(x, \nabla_x \Phi (x,s)))=0 \,\\
&\partial_s \Phi(x,s) + H (x, \nabla_x \Phi (x,s))  + f (x,\rho(\cdot,s))=0\,\\
&\rho(x,t) = {\rho}(x),\quad \Phi(x,0) = g(x, \rho(\cdot,0)),\\
\end{cases}
\label{1}
\eqn
where the Hamiltonian  $H$ is defined as \begin{equation*}
H(x,p) := \sup_{v\in TX}~\langle v , p  \rangle - L(x,v). \,
\label{well}
\end{equation*}
Here $H$ relates to the Lagrangian $L$ through a Legendre transform in $v$. And $\rho(s,\cdot)$ represents the population state at time $s$ satisfying the continuity equation 
while $\Phi(s,\cdot)$ governs the velocity of population according to the Hamilton-Jacobi equation.

A game is called a potential game when there exists a differentiable potential energy $F\colon \mathcal{P}(X)\rightarrow \mathbb{R}$ and terminal cost $G\colon \mathcal{P}(X)\rightarrow \mathbb{R}$ such that 
\begin{equation*}
{\delta_{\rho(x)}}F(\rho)=f(x,\rho),\quad {\delta_{\rho(x)}}G(\rho)=g(x,\rho),
\end{equation*}
where ${\delta_{\rho(x)}}$ is the $L^2$ first variation operator. The above definition represents that the incentives of all the players can be globally modeled by a functional called the potential~\cite{PMFG1}. In this case, the game is modeled as the following optimal control problem in density space:
\begin{subequations}\label{2ab}
\begin{equation}\label{2}
\inf_{\rho, v}\quad  \left \{ \int_0^t \big[\int_X L( x,  v(x,s) )   \rho(x,s) \, dx-F(\rho(\cdot,s))\big]ds  + G(  \rho(\cdot,0) ) \right \},
\end{equation}
where the infimum is taken among all vector fields $ v(x,s)$ and densities $\rho(x,s)$ subject to the continuity equation
\beqn
\begin{cases}
\frac{\partial}{\partial s}\rho(x,s) + \nabla \cdot (   \rho(x,s)   v(x,s) )=0, \quad  0\leq s \leq t  \,, \\
 \rho(x,t) =  { \rho }(x)  \,.&
\end{cases}
\label{211}
\eqn
\end{subequations}

It can be shown that, under suitable conditions of $L$, $F$, $G$, NEs are minimizers of potential games. In other words, every NE \eqref{1} satisfies the Euler-Lagrange equation (Karush-Kuhn-Tucker conditions) of the optimal control problem \eqref{2ab}. Let $\mathcal{H}(\rho,\Phi)$ denote the total Hamiltonian defined over the primal-dual pair $(\rho,\Phi)$:
\beqnx
\mathcal{H}(\rho,\Phi) := \int_X \rho(x) H(x, \nabla_x \Phi (x) ) \,dx +F(\rho(\cdot)), 
\eqnx
where $\delta_\rho$, $\delta_\Phi$ are $L_2$ first variations w.r.t. $\rho$ and $\Phi$. 
Then, NE \eqref{1} is given as  
\beqnx
\begin{cases}
&\partial_s \rho_s = \delta_{\Phi_s} \mathcal{H} (\rho_s, \Phi_s),\quad \partial_s \Phi_s= - \delta_{\rho_s} \mathcal{H} (\rho_s, \Phi_s)\,\\
&\rho_t={\rho}(x), \quad\Phi_0= \delta_{\rho_0}G(\rho_0).
\end{cases}
\eqnx

The time evolution of the minimal value in optimal control satisfies the Hamilton-Jacobi equation. In the case of density space, the optimal value function in \eqref{2} is denoted by $U\colon [0,+\infty)\times \mathcal{P}(X)\rightarrow \mathbb{R}$. As shown in \cite{WHJB,WHJB1}, $U$ satisfies the Hamilton-Jacobi equation in density space
\begin{equation*}
\begin{cases}
&\partial_s U(s,\rho(\cdot)) + \mathcal{H}(\rho(\cdot), \delta_\rho U) = 0 \\
&U(0,\rho)= G(\rho) \,.
\end{cases}
\end{equation*}
Here, HJD is a functional partial differential equation. If $U$ is solved, then its characteristics in density space, i.e. $(\rho, \Phi)$, are known. In particular, 
$\Phi(t,x)={\delta_{\rho(x)}}U(t,\rho)$. Thus, NE \eqref{1} is found. Next, we shall design a fast numerical algorithm for HJD.

\subsection{Hopf formula in density space}
Our approach is based on a primal-dual reformulation of the optimal control problem \eqref{2ab}, which we call the Hopf formula. 
\begin{proposition}[Hopf formula in density space]
Assume the duality gap between the primal problem \eqref{2ab} and its dual problem is zero, then
\begin{eqnarray}
U(t,\rho) 
&:=& \sup_{\Phi} \, \Bigg \{  \int_X 
 \rho(x) \Phi(x)  dx -   \int_0^t \left(  F(\rho(\cdot,s)) - \int_X 
 \rho(s,x) \delta_{\rho_s}F(\rho(\cdot, s)) dx \right) ds -  G^*(  \Phi(\cdot,0)   )   :  \notag \\
& &
\quad \quad \quad \begin{matrix} 
 \partial_s  \rho(x,s)  + \nabla \cdot (  \rho(x,s) D_p H(x, \nabla \Phi(x,s)  )  )= 0 \notag ~\\
 \partial_s  \Phi(x,s)  +  H(x, \nabla \Phi(x,s)  )+\delta_{\rho_s(x)}F(\rho(\cdot, s))=0\\
  \rho(x,t) = \rho(x), \quad \Phi(x,t) = \Phi(x)
\end{matrix}
 \Bigg\}  
\\ \label{lala_hopf}
\end{eqnarray}
where 
\begin{equation*}
G^*(\Phi(\cdot, 0)):=\sup_{\rho(\cdot, 0)\in \mathcal{P}(X)}\Big\{ G(\rho(\cdot,0) )-\int_X\rho(x,0)\Phi(x,0)dx\Big\}.
\end{equation*}
\end{proposition}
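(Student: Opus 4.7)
The plan is a Lagrangian duality argument: I would introduce the adjoint $\Phi(x,s)$ as a Lagrange multiplier for the continuity equation \eqref{211}, invoke the zero-duality-gap hypothesis to swap $\inf$ and $\sup$, and then eliminate the primal variables $v$, $\rho(x,s)$ for $s \in (0,t)$, and $\rho_0$ by explicit inner minimizations.

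First I would rewrite the primal \eqref{2ab} as
\begin{eqnarray*}
U(t,\rho) = \inf_{\rho, v} \sup_{\Phi} \Big\{ & & \int_0^t \Big[\int_X L(x,v)\rho \, dx - F(\rho_s)\Big] ds + G(\rho_0) \\
& & + \int_0^t \int_X \Phi\big[\partial_s \rho + \nabla\cdot(\rho v)\big] dx \, ds \Big\}
\end{eqnarray*}
subject only to the terminal condition $\rho(x,t)=\rho(x)$. Integration by parts on the torus $X$, where boundary terms vanish, would convert the multiplier term into $\int_X \Phi_t \rho_t \, dx - \int_X \Phi_0 \rho_0 \, dx - \int_0^t \int_X \rho[\partial_s \Phi + v \cdot \nabla \Phi] \, dx \, ds$.

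Next, under the zero-duality-gap assumption, I would swap $\inf$ and $\sup$ and perform the inner infima in sequence. Pointwise minimization in $v$ yields $\inf_v[L(x,v) - v \cdot \nabla \Phi] = -H(x, \nabla \Phi)$ by the Legendre relation, with minimizer $v = D_p H(x, \nabla \Phi)$; reinserting this minimizer into the continuity equation recovers the first equation of the characteristic system. Minimization in $\rho(x,s)$ for $s \in (0,t)$ has Euler--Lagrange condition $\partial_s \Phi + H(x, \nabla \Phi) + \delta_{\rho_s} F(\rho_s) = 0$, i.e.\ the second equation of the characteristic system; away from it the interior infimum in $\rho$ is $-\infty$, so this equation emerges as a hard constraint in the dual. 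Finally, $\inf_{\rho_0}[G(\rho_0) - \int_X \Phi_0 \rho_0 \, dx] = -G^*(\Phi_0)$ by definition of the Legendre transform. Substituting the HJ equation back converts $\rho(-H - \partial_s \Phi)$ into $\rho\, \delta_{\rho_s} F(\rho_s)$, and gathering the surviving terms produces the stated objective $\int_X \rho \Phi \, dx - \int_0^t[F(\rho_s) - \int_X \rho_s \delta_{\rho_s}F(\rho_s)\, dx]\, ds - G^*(\Phi_0)$.

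The hardest step will be the rigorous justification of the $\inf$--$\sup$ swap, which is precisely the zero-duality-gap hypothesis and must be verified under suitable convexity and compactness assumptions; the statement imports this as a standing assumption. The inner minimization in $\rho(x,s)$ also requires care: one should respect the probability constraint $\int_X \rho\, dx = 1$, which is automatic from mass conservation under the continuity equation, and one needs convexity of $F$ (or a suitable convex relaxation) to guarantee that the Euler--Lagrange equation identifies the infimum rather than only a critical point. Lastly, well-posedness of the backward characteristic system from the prescribed terminal data $\Phi(x,t)=\Phi(x)$ and $\rho(x,t)=\rho(x)$ is needed for the dual objective to be well-defined, which is standard under smoothness of $H$, $F$, $G$.
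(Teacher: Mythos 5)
Your proposal is correct and follows essentially the same route as the paper's formal derivation: Lagrange multiplier for the continuity equation, inf--sup swap justified by the assumed zero duality gap, integration by parts in space and time, Legendre transform in the velocity (the paper routes this through the flux $m=\rho v$, a cosmetic difference at this formal level), elimination of $\rho_0$ via $G^*$, and substitution of the saddle-point characteristic system back into the objective. One small correction to your closing remarks: since the primal objective carries $-F(\rho_s)$, the inner minimization in $\rho$ is convex when $F$ is \emph{concave} (as the paper assumes in (A3)), not convex.
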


\begin{proof}[Formal derivation]
We first define the flux function $m(s,x):=\rho(s,x)v(s,x)$ in \eqref{2ab}. Thus problem \eqref{2ab} takes the form 
\begin{equation*}
U(t,\rho):=\inf_{\rho, v}\quad  \left \{ \int_0^t \big[\int_X L( x, \frac{m(x,s)}{\rho(x,s)} )   \rho(x,s) \, dx-F(\rho(\cdot,s))\big]ds  + G(  \rho(\cdot,0) ) \right \},
\end{equation*}
where the infimum is taken among all flux functions $m(x,s)$ and densities $\rho(x,s)$ subject to
\begin{equation*}
\begin{cases}
\frac{\partial}{\partial s}\rho(x,s) + \nabla \cdot m(x,s)=0, \quad  0\leq s \leq t  \,, \\
 \rho(x,t) =  { \rho }(x).&
\end{cases}
\end{equation*}

Next, we compute the dual of the optimal control problem \eqref{2ab}. Assume that, under suitable assumptions of $F$, $G$, $L$, the duality gap of optimal control problem \eqref{2ab} is zero. Hence we can switch ``inf'' and ``sup'' signs in our derivations. Let the Lagrange multiplier of continuity equation \eqref{211} be denoted by $\Phi(x,s)$. The optimal control problem \eqref{2ab} becomes
\begin{eqnarray*}
U(t,\rho) &=& \inf_{m(\cdot, s) , \rho(\cdot, s) ,  \rho(\cdot, t) = \rho  } \sup_{\Phi(\cdot, t)} \, \Bigg \{ \int_0^t  \int_X L\left( x, \frac{m(x,s)}{\rho(x,s)}\right) \rho(x,s) \,dx  ds  -  \int_0^t  F( \rho(\cdot,s)) ds +  G(\rho(\cdot,0) ) \notag \\
& &\hspace{3.5cm}  + \int_0^t \int_X \left( \partial_s \rho(x,s)+ \nabla \cdot m(x,s) \right) \Phi(x,s) dx  ds \Bigg\}  \notag \\
&= &\sup_{\Phi(\cdot, s)} \inf_{m(\cdot, s) , \rho(\cdot, s) ,  \rho(\cdot, t) = \rho } \, \Bigg \{ \int_0^t  \int_X L\left( x, \frac{m(x,s)}{\rho(x,s)}\right) \rho(x,s) \,dx  ds  -  \int_0^t  F( \rho(\cdot,s)) ds +  G(\rho(\cdot,0) )  \notag \\
& & \hspace{3.5cm} + \int_0^t \int_X \left( \partial_s \rho(x,s)+ \nabla \cdot m(x,s) \right) \Phi(x,s) dx  ds \Bigg\} \notag \\
&=&\sup_{\Phi(\cdot, s)} \inf_{m(\cdot, s) , \rho(\cdot, s) ,  \rho(\cdot, t) = \rho } \, \Bigg \{ \int_0^t  \int_X\Big[ L\left( x, \frac{m(x,s)}{\rho(x,s)}\right)- \frac{m(x,s)}{\rho(x,s)}\cdot\nabla \Phi(x,s)  \Big]\rho(x,s) \,dx  ds\\
 & &\hspace{3.5cm} -  \int_0^t  F( \rho(\cdot,s)) ds +  G(\rho(\cdot,0) ) + \int_0^t \int_X \partial_s \rho(x,s) \Phi(x,s) dx  ds \Bigg\} \notag \\
 &=& \sup_{\Phi(\cdot, s)} \inf_{ \rho(\cdot, s) ,  \rho(\cdot, t) = \rho } \, \Bigg \{ -  \int_0^t   \int_X  \rho(x,s)    H( x,  \nabla \Phi(x,s)  ) \,dx ds  -  \int_0^t  F( \rho(\cdot,s)) ds\\
 & & \hspace{3.5cm} +G(\rho(\cdot,0) ) + \int_0^t \int_X \partial_s \rho(x,s) \Phi(x,s)  dx ds \Bigg\},
\end{eqnarray*}
where the third equality is given by integration by parts, and the fourth equality follows by the Legendre transform in the third equality, i.e., with $v(x,s):=\frac{m(x,s)}{\rho(x,s)}$, $$ H(x,\nabla\Phi)=\sup_{v\in TX}~\nabla\Phi\cdot v-L(x,v).$$
%%In above formula, by integration by parts w.r.t $s$, we have
%%\begin{equation*}
%%\int_0^t \int_X \partial_s \rho(x,s) \Phi(x,s)  dx ds=
%%\end{equation*}
By integration by parts w.r.t. $s$ for the functional $\int_0^t \int_X \partial_s \rho(x,s) \Phi(x,s)  dx ds$, we obtain
\begin{eqnarray*}
 U(t,\rho) &= &  \sup_{\Phi(\cdot, s)} \inf_{ \rho(\cdot, s) ,  \rho(\cdot, t) = \rho } \, \Bigg \{ -  \int_0^t   \int_X  \rho(x,s)    H( x,  \nabla \Phi(x,s)  ) \,dx ds  -  \int_0^t  F( \rho(\cdot,s)) ds\\
& & \hspace{3.1cm} +G(\rho(\cdot,0) )-\int_X\rho(x,0)\Phi(x,0)dx\\
& &\hspace{3.1cm}+\int_X\rho(x,t)\Phi(x,t)dx- \int_0^t \int_X  \rho(x,s) \partial_s\Phi(x,s)  dx ds \Bigg\}.
%  \label{new}
\end{eqnarray*}
Then,
\begin{eqnarray*}
 U(t,\rho)&= & \sup_{\Phi } \sup_{\Phi(\cdot, s), \Phi(\cdot, t) = \Phi} \inf_{ \rho(\cdot, s) , \rho(\cdot, t) = \rho } \, \Bigg \{ -   \int_0^t  \int_X  \rho(x,s)    H( x,  \nabla \Phi(x,s)) \,dx ds-  \int_0^t  F( \rho(\cdot,s)) ds    \\ 
& & \hspace{3.5cm} -  G^*(  \Phi(\cdot,0)   ) + \int_X 
 \rho(x,t) \Phi(x,t)  dx   - \int_0^t \int_X 
 \rho(x,s) \partial_s \Phi(x,s)  dx ds \Bigg\}.
 \label{U}
\end{eqnarray*}

We optimize the above formula w.r.t. $\rho(x,s)$ and $\phi(x,s)$. Suppose for a fixed $\Phi = \Phi(t,\cdot)$, the saddle point problem
\begin{eqnarray*}
 &  & \sup_{\Phi(\cdot, s), \Phi(\cdot, t) = \Phi} \inf_{ \rho(\cdot, s) , \rho(\cdot, t) = \rho } \, \Bigg \{ -   \int_0^t  \int_X  \rho(x,s)    H( x,  \nabla \Phi(x,s)) \,dx ds - \int_0^t \int_X 
 \rho(x,s) \partial_s \Phi(x,s)  dx ds    \\ 
& & \hspace{4.2cm}  -  \int_0^t  F( \rho(\cdot,s)) ds -  G^*(  \Phi(\cdot,0)   ) + \int_X 
 \rho(x,t) \Phi(x,t)  dx   \Bigg\} 
\end{eqnarray*}
has a unique solution. It is simple to check that this saddle point satisfies \eqref{1}. Substituting \eqref{1} into \eqref{U}, we derive the Hopf formula \eqref{lala_hopf}.
\end{proof}
%We notice that the formula \eqref{lala_hopf} seem to {go beyond} these assumptions and give legitimate numerical results, as shown in section \ref{sec5}. 
%%Although it is not the scope of this paper to search for more precise conditions for which the above shall hold.
% it is an interesting topic, and it will be great to explore this direction.
%{\color{green} In the same spirit, in the discrete version, a conjecture is given.}

Equation \eqref{lala_hopf} can be viewed as the Hopf formula of the optimal control problem \eqref{2ab}.  This goes in line with \cite{Hopf_Lax_4, Hopf_Lax_2,Hopf_Lax_3}.  That means that \eqref{lala_hopf}  contains  an optimization problem and uses a minimal number of unknown variables. We develop fast algorithms based on this formula. 
%\noindent Concerning the above conclusion from the inf-sup inequality, the following remarks are in force:
%%\begin{remark}
%%\end{remark}

\begin{remark}
When $(-F)$, $G$ and $L$ are convex and smooth, the discrete formulation of the primal dual formulation of \eqref{2ab} has been used for numerical computation in \cite{MFG1,MFG2, MFG} along with Newton's method.  We, on the other hand, prefer sticking to the formulation \eqref{lala_hopf} since we hope to solve for non-convex $(-F)$, $G$ and $L$ with nonsmooth $H(x,p)$, while keeping a minimal number of variables. In addition, the Hopf formula \eqref{lala_hopf} can be further simplified into
\beqn
U(t,\rho) 
&=& \sup_{\Phi(\cdot, s)} \, \Bigg \{  \int_X  \rho(x) \Phi(x)  dx -   \int_0^t F^* (\Phi(\cdot,s)) ds -  G^*(  \Phi(\cdot,0)   )  \Bigg\},  \label{lala_hopf1}
\eqn
which coincides with (57)-(59) in \cite{Hopf_MFG}.  However, the formulation \eqref{lala_hopf1}, similar to the Lagrangian formulation \eqref{2ab}, has more independent variables after discretization of $\Phi(x,s)$. Hence, it is not ideal for numerical computation.
\end{remark}

\begin{remark}\label{rm1}
The Hopf formula \eqref{lala_hopf} is also related to the dual formulation of an optimal transport problem. 
When $F(\rho) = 0$, the primal equation in \eqref{lala_hopf} can be dropped. Let $\rho = \rho_1$ in
\beqnx
U(t,\rho_1) 
 =  \sup_{\Phi} \, \left \{  \int_X 
 \rho(x) \Phi(x)  dx -  G^*(  \Phi(\cdot,0)   )   :   \partial_s  \Phi(x,s)  +  H( x,  \nabla \Phi(x,s)  ) = 0, \, \Phi(x,t) = \Phi(x) \right\}. %\\
\eqnx
This is precisely the Kantorovich dual of the optimal transport problem from $\rho_0$ to $\rho_1$ when we choose $G(\rho) = \iota_{\rho_0} (\rho)$ and let $t =1$. Here, for a set $A$ and a subset $B \subset A$, the indicator function $\iota_B : A \rightarrow \{ 0,\infty \}$ is defined as
\beqnx
\iota_B (x) = 
\begin{cases}
0 & \text{ if } x\in B \\
\infty & \text{ if } x\notin B
\end{cases}\ .
\eqnx
If $B = \{x_0\}$ is a singleton, we write $\iota_{x_0} (x) := \iota_{\{x_0\}} (x)$, abusing the notation.
\end{remark}
\begin{remark}
As in remark \ref{rm1}, our Hopf formula \eqref{lala_hopf} reduces to Monge-Kantorovich duality of the optimal transport with a specific choices of $F$, $G$ and $t$.  Moreover, the simplified formula can be used to compute the proximal map of $p$-Wasserstein distance in the $L^2$ sense. Let us recall the connection between optimal transport and \eqref{2ab}. The optimal transport problem can be formulated in an optimal control problem in density space, known as the Benamou-Brenier formula \cite{vil2008}. Consider $L(x, q) = \frac{1}{2}|q|_2^p$. Then, \beqnx
 U(1,\rho_1)  
&\begin{matrix} \text{(Definition)} \\ = \end{matrix}& \inf_{ v(\cdot, s)  , \rho(\cdot, s) } \, \Bigg\{ \int_0^1 \int_X L( v(x,s)) \rho(x,s) \,dx ds  +  G(\rho (x,0) ) : \\
& & \hspace{2cm} \partial_s \rho+ \nabla \cdot ( \rho v ) = 0 , \,  \rho(1) =\rho_1 \Bigg\} \\
&\begin{matrix} \text{(Benamou-Brenier)} \\ = \end{matrix}& \inf_{\rho_0 } \, \Bigg\{ \big(W_p ( \rho_0,  \rho_1)\big)^p +  G(\rho_0 ) \Bigg\} \\
&\begin{matrix} \text{(Kantorovich duality)} \\ = \end{matrix}&  \inf_{\rho_0 }  \sup_{\Phi_1} \, \Bigg\{  \int_{Y}  \Phi(y,1)  \rho_1(y) d y -  \int_{X}  \Phi(x,0)   \rho_0(x) d x +  G(\rho_0 ) : \\
& &\hspace{1.5cm} \partial_s  \Phi(x,s)  +  H( \nabla \Phi(x,s)  ) \leq 0, \, \Phi(x,1) = \Phi_1(x)
\Bigg \}  \\
&\begin{matrix} \text{(Convexity of $G,H$)} \\ = \end{matrix}&   \sup_{\Phi_1}   \, \Bigg\{  \int_{Y}  \Phi(y,1)  \rho_1(y) d y - G^*(  \Phi(\cdot,0) ) : \\
& & \quad \quad \partial_s  \Phi(x,s)  +  H( \nabla \Phi(x,s)  ) = 0, \, \Phi(x,1) = \Phi_1(x)
\Bigg \}, 
\eqnx
where $W_p( \rho_0,  \rho_1)$ is the $L^p$-Wasserstein metric which can be defined via the Benamou-Brenier formulation as follows:
\begin{equation*}
\big(W_p( \rho_0,  \rho_1)\big)^p := \inf_{ v(\cdot, s)  , \rho(\cdot, s) } \, \Bigg\{ \int_0^1 \int_X L( v(x,s)) \rho(x,s) \,dx ds  \colon \partial_s \rho+ \nabla \cdot ( \rho v ) = 0 , \,  \rho(0)=\rho_0, \,\rho(1) =\rho_1\Bigg\}.
\end{equation*}

If one aims to consider a general optimization problem over $G$ regularized by $W_{p}^{p} $ as in\beqnx
\min_{\rho_0} \{ \beta W_{p}^{p} (\rho_0,\rho ) + G(\rho_0) \},
\eqnx
we can either apply the above formulation directly or apply a splitting method, in which we need the proximal maps of $ W_{p}^{p} $ (in $L^2$ sense) as
\beqnx
 \text{Prox}_{\beta W_{p}^{p} (\cdot,\rho ) } (\rho_1) 
&=& \mathop\mathrm{argmin}_{\rho_0} \, \Bigg \{
\beta W_{p}^{p} (\rho_0, \rho ) +
 \frac{1}{2}\| \rho_0  -\rho_1\|^2  \Bigg\}  \\
&=& \rho_1 -  \beta  \tilde{ \hat \Phi},
\eqnx
where
\beqnx
\tilde{ \hat \Phi} &:=& \mathop\mathrm{argmax}_{\tilde{\Phi}} \, \Bigg \{ \int_X \rho(x)  \tilde{\Phi}(x) dx - 
 \int_X \rho_1(x)  \Phi(\tilde{ \Phi}, 0,\cdot)dx + \frac{\beta}{2} \|  \Phi(\tilde{\Phi}, 0,\cdot)\|^2: \\
& & \hspace{1.8cm}  \partial_s \Phi + H (x, \nabla_x \Phi )  = 0 \,, \Phi(\tilde{\Phi}, t,\cdot) = \tilde{\Phi} \Bigg\}  \,.
\eqnx
\end{remark}

\section{Discretization and rigorous treatment}

In this section, we aim to give a rigorous treatment to the discrete spatial states in potential MFGs. 
Our spatial discretization follows the same  work on optimal transport on graphs as in \cite{Li4, LiFisher} and our proof follows  the ideas in \cite{Hopf_Lax_4}.

For illustrative purposes, we focus on the following special form of the Lagrangian:
$$L(x,v):=\sum_{i=1}^n L(v_i), $$
where $L :\mathbb{R}^1\rightarrow \mathbb{R}^1$ is a proper function, define the Hamiltonian $H~:~\mathbb{R}^1\rightarrow \mathbb{R}^1$ as
$$H(p) :=\sup_{v\in \mathbb{R}^1} \left\{ pv-L(v) \right\}.$$

Consider $G=(V, E)$ as a uniform toroidal graph with equal spacing $\Delta x=\frac{1}{M}$ in each dimension. Here, $V$ is a vertex set with $|V|=(M+1)^d$ nodes, and each node, $i=(i_k)_{k=1}^d\in V$, $1\leq k\leq d$, $0\leq i_k\leq n$, represents a cube with length $\Delta x$:
\[
C_i=\{
(x_1,\cdots, x_d)\in [0,1]^d\colon
|x_1-i_1\Delta x|\le \Delta x/2,\cdots,
|x_d-i_d\Delta x|\le \Delta x/2\}.
\]
Here $E$ is an edge set, where $i+\frac{e_v}{2}:=\textrm{edge}(i,i+e_v)$, and $e_v$ is a unit vector at $v$th column.

Define $$\rho_i := \int_{C_i} \rho(x) dx \in [0,1]$$ on each $i \in V$. Let the discrete flux function be $m:=(m_{i+\frac{e_v}{2}})_{i+\frac{e_v}{2}\in E}$,
where $m_{i+\frac{e_v}{2}}$ represents the discrete flux on the edge $i+\frac{e_v}{2}$, i.e.,
\[
m_{i+\frac{e_v}{2}} \approx \int_{C_{i+\frac{e_v}{2}}(x)}m_v(x)\;dx\ ,
\]
where $m(x)=(m_v(x))_{v=1}^d$ is the flux function in continuous space. 

Thus the discrete divergence operator is:
\begin{equation*}
\textrm{div}(m)|_i=\frac{1}{\Delta x}\sum_{v=1}^d (m_{i+\frac{1}{2}e_v}-m_{i-\frac{1}{2}e_v}).
\end{equation*}
The discretized cost functional forms
\begin{equation*}
  \mathcal{L}(m,\rho) :=  \sum_{i+\frac{e_v}{2}\in E} \tilde{L} \left( m_{i+\frac{1}{2}e_v} , \theta_{i+\frac{1}{2}e_v} \right) 
  \end{equation*}
where
\begin{equation*}
 \tilde{L} \left( m_{i+\frac{1}{2}e_v} , \theta_{i+\frac{1}{2}e_v} \right)  := 
  \begin{cases}
 L\left(\frac{m_{i+\frac{1}{2}e_v}}{ \theta_{i+\frac{1}{2}e_v}}\right) \theta_{i+\frac{1}{2}e_v} & \textrm{if $ \theta_{i+\frac{1}{2}e_v}>0$}\ ;\\
 0 & \textrm{if $ \theta_{i+\frac{1}{2}e_v}=0$ and  $m_{i+\frac{e_v}{2}}=0$\ ;}\\
 +\infty & \textrm{Otherwise}\ .
 \end{cases}
  \end{equation*}
and $ \theta_{i+\frac{1}{2}e_v}:=\frac{1}{2}(\rho_i+\rho_{i+e_v})$ is the discrete probability on the edge $i+\frac{e_v}{2}\in E$.

We further introduce a time discretization. The time interval $[0,1]$ is divided into $N$ intervals with endpoints $t_n=n \Delta t$, $\Delta t=\frac{1}{N}$, $l=0,1,\cdots, N$. Combining the above spatial discretization and a forward finite difference scheme on the time variable, we arrive at the following discrete optimal control problem:
\begin{subequations}\label{finite}
\begin{equation}
{\tilde U } (t,\rho):= \inf_{m, \rho}~\Big\{\sum_{n=1}^N \Delta t \, \mathcal{L}(m^n, \rho^n)-\sum_{n=1}^N \Delta t\,  F(\rho^n)+G(\rho^0)\Big\}
\end{equation}
where the minimizer is taken among $\{\rho\}_i^n$, $\{m\}^n_{i+\frac{e_v}{2}}$, such that for $n = 0,...,N-1$
\begin{equation}
\begin{cases}
\rho^{n+1}_i - \rho^n_i +  \Delta t\cdot\textrm{div}(m^{n+1})|_i  = 0 \,, \\
 \rho^{N}_i =   \rho_i  \ .&
\end{cases}
\end{equation}
\end{subequations}

We next derive the discrete Hopf formula for minimization \eqref{finite}. Denote $ \rho^{N}_i :=  \rho_i $ and $$ (\mathbf{m}, \mathbf{\rho},\mathbf{\Phi}) := 
\left( \{ m^n_{i+\frac{1}{2}e_v}  \}_{ n=0}^{N-1} , \{ \rho^n_i \}_{ n=0}^{N-1}  ,   \{ \Phi^n_i \}_{ n=1}^{N}  \right)
 \in \mathbb{R}^{|E| N}  \times  [0,1] ^{|V| N}  \times \mathbb{R} ^{|V| N}  \,.
 $$ 
 Hence by an application of Lagrange multiplier at \eqref{finite}, then we have
\beqnx
{\tilde U } (t,\{\rho_i\}) =\inf_{\{ m^n_{i+\frac{1}{2}e_v}  \}_{ n=0}^{N-1}  \in \mathbb{R}^{|E| N} \, , \, \{ \rho^n_i \}_{ n=0}^{N-1}   \in  [0,1] ^{|V| N}  }  \sup_{ \{ \Phi^n_i \}_{ n=1}^{N}   \in \mathbb{R} ^{|V| N}  } \, \mathcal{F} (\mathbf{m}, \mathbf{\rho},\mathbf{\Phi})
\eqnx
where
\beqn
\mathcal{F} (\mathbf{m}, \mathbf{\rho},\mathbf{\Phi}) &:= & 
 \sum_{n=0}^{N-1} \sum_{i+\frac{e_v}{2}\in E} \Delta t \,  { \mathcal{L} } \left( m_{i+\frac{1}{2}e_v}^{n+1} , \theta_{i+\frac{1}{2}e_v}^{n+1} \right)   - \sum_{n=0}^{N-1} \Delta t \,  F( \{\rho\}_i^{n+1} ) +  G( \{\rho\}_i^0 )  \\
& & + \sum_{n=0}^{N-1} \sum_{i \in V} \Phi_i^{n+1} \left(  \rho^{n+1}_i - \rho^n_i +  \Delta t\cdot\textrm{div}(m^{n+1})|_i  \right)  \ . 
\label{orz}
\eqn
For a rigorous treatment, we assume:\begin{enumerate}
\item[(A1)] 
The Lagrangian $L :\mathbb{R} \rightarrow \mathbb{R}$ is a proper, lower semi-continuous,  convex functional.

\item[(A2)] 
The Lagrangian $L :\mathbb{R} \rightarrow \mathbb{R}$ has the following properties:
\begin{itemize}
\item
for any fixed $x\neq 0$, $\lim_{y \rightarrow 0^+} L\left(\frac{x}{y}\right) y = \infty$;
\item
for any fixed $y$, the function $(x,y) \rightarrow L\left(\frac{x}{y}\right) y$ is equi-coercive (under parameter $y$) w.r.t $x$ in the following sense: for all $N >0$, there exists $K$ (independent of $y$)
\beqnx
\left |  L\left(\frac{x}{y}\right) y \right| \geq K 
\eqnx
whenever $|x| \geq N$. 

\end{itemize}

\item[(A3)] 
The functional $F :[0,1] ^{|V|} \rightarrow \mathbb{R}$ is a proper, upper semi-continuous, concave functional.

\item[(A4)] 
The functional $G :[0,1] ^{|V|} \rightarrow \mathbb{R}$ is proper, lower semi-continuous, and convex in $\{ \rho_i \}_{i=1}^{|V|} $.

\item[(A5)]
$ H:  \mathbb{R}^1\rightarrow \mathbb{R}^1 $ is in $C^2$, and $F :[0,1] ^{|V|} \rightarrow \mathbb{R}$ and $G :[0,1] ^{|V|} \rightarrow \mathbb{R}$ are in $C^2((0,1) ^{|V|})$.

\item[(A6)] 
Denote the Legendre transform of the function $ F:  [0,1]^{|V|} \rightarrow \mathbb{R}$ by $F^*$. Suppose $F^*$ is coercive, i.e.
\beqnx
| F^*(x) | \rightarrow \infty
\eqnx
as $x \rightarrow \infty$.

\item[(A7)] 
The derivative of the function $ F:  [0,1]^{|V|} \rightarrow \mathbb{R}^1 $ satisfies, for any $\bar \rho \in \{0,1\}^{|V|}$
\beqnx
| \partial_\rho F( \{ \rho _i \})  |^2_2 \rightarrow \infty
\eqnx
whenever $\{ \rho _i \}\rightarrow \bar \rho $. 

\end{enumerate}
Under the above assumptions, we introduce the discrete Hopf formula by the following theorem
\begin{Theorem}\label{thm}
If (A1)-(A7) holds, then the value function $ {\tilde U } (t,\{\rho_i\})$ in \eqref{finite} equals 
\begin{eqnarray}
 {\tilde U } (t,\{\rho_i\})=&  \sup_{\{ \Phi_i \}  \in \mathbb{R} ^{|V| }    }    \Bigg \{ 
 \sum_{i\in V} \Phi_i^{N}  \rho_i   - \sum_{n=1}^N \Delta t \left( F( \{\rho\}_i^n ) -  \sum_i [\nabla_\rho F ( \{\rho\}_i^n ) ]_i \rho^n_i \right)     -  G^*( \{\Phi\}_i^0 )   :  \notag \\
&\hspace{1.5cm}   \begin{matrix}
  \rho^{n}_i - \rho^{n-1}_i +  \Delta t \sum_{v=1}^d D_p H \left(\frac{1}{\Delta x} (\Phi_i^n-\Phi_{i+e_v}^n) \right)    \theta^n_{i+\frac{1}{2}e_v}  = 0  \\
    \Phi^{n+1}_i - \Phi^{n}_i +  \frac{ \Delta t }{4} \sum_{v=1}^d  H  \left(\frac{1}{\Delta x}(\Phi_i^n - \Phi_{i+e_v}^n) \right) + \Delta t[\nabla_\rho F ( \{\rho\}_i^n ) ]_i  = 0 \\
     \rho^{N}_i = \rho_i, \, \Phi_i^N   = \Phi_i 
  \end{matrix}  \Bigg\} 
\label{Smart}
\end{eqnarray}
\end{Theorem}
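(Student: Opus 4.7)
The plan is to mirror the formal derivation used for the continuous Proposition, but to carry it out at the discrete level where the coercivity assumptions (A1)--(A7) give a rigorous control on attainment and swapping of $\inf$--$\sup$. Start from the saddle-point expression $\tilde U(t,\{\rho_i\}) = \inf_{\mathbf{m},\mathbf{\rho}} \sup_{\mathbf{\Phi}}\, \mathcal{F}(\mathbf{m},\mathbf{\rho},\mathbf{\Phi})$ in \eqref{orz}. The map $(\mathbf{m},\mathbf{\rho})\mapsto\mathcal{F}$ is convex by (A1) together with the perspective construction of $\tilde L$, by (A4), and by the concavity of $F$ assumed in (A3), since the contribution $-F(\rho^n)$ enters convexly; the map $\mathbf{\Phi}\mapsto\mathcal{F}$ is affine and hence concave. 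Coercivity of the inner infimum in $\mathbf{m}$ is provided by (A2), coercivity in $\mathbf{\rho}^0$ by (A6) (through the convex conjugate $G^*$), and the blow-up in (A7) rules out minimizers on the boundary of $[0,1]^{|V|}$. With these ingredients I would invoke a Sion-type minimax theorem (as used in \cite{Hopf_Lax_4}) to exchange $\inf$ and $\sup$, so
\begin{equation*}
\tilde U(t,\{\rho_i\}) = \sup_{\mathbf{\Phi}} \inf_{\mathbf{m},\mathbf{\rho}^{0},\ldots,\mathbf{\rho}^{N-1}} \mathcal{F}(\mathbf{m},\mathbf{\rho},\mathbf{\Phi}).
\end{equation*}

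Next I would carry out the inner minimization in closed form. Minimizing over each flux $m^{n+1}_{i+e_v/2}$ is pointwise and produces the Legendre transform of $\tilde L$ in its first argument, which by the definition of $H$ and by (A5) gives $-\Delta t\, \theta^{n+1}_{i+e_v/2}\, H\bigl(\tfrac{1}{\Delta x}(\Phi^{n+1}_i - \Phi^{n+1}_{i+e_v})\bigr)$ after a discrete summation-by-parts that moves $\Delta t\,\Phi\cdot\mathrm{div}(m)$ onto the forward difference of $\Phi$ along each edge. I then perform a discrete Abel summation in the time index on $\sum_n \Phi_i^{n+1}(\rho_i^{n+1}-\rho_i^n)$ to produce the boundary terms $\sum_i \Phi_i^N\rho_i$ and $-\sum_i \Phi_i^0 \rho_i^0$ together with an interior sum involving $\rho_i^n(\Phi_i^{n+1}-\Phi_i^n)$. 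The pointwise infimum in $\rho_i^0$ is then $-G^*(\{\Phi_i^0\})$ by (A4) and (A6), which is attained in $(0,1)^{|V|}$ by (A7).

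For $1\le n\le N-1$ the infimum in $\rho_i^n$ is an interior critical point, again by (A7), and the first-order condition for $\rho_i^n$ produces exactly the second constraint in \eqref{Smart}, namely the discrete Hamilton--Jacobi equation with the factor $1/4$ arising from the $1/2$ in the edge average $\theta_{i+e_v/2}=\tfrac12(\rho_i+\rho_{i+e_v})$ combined with the $1/2$ from the two incident edges in each coordinate direction. At the same saddle point the optimality condition in $\Phi_i^n$ is the first constraint of \eqref{Smart}, i.e.\ the discrete continuity equation with velocity $D_pH$. To finish, substitute the critical identities back into $\mathcal{F}$: the discrete HJ equation lets one rewrite the interior integration-by-parts term plus the $H$-term as $\Delta t\,[\nabla_\rho F(\rho^n)]_i \rho_i^n$, so the remaining $F$ contribution becomes $F(\rho^n) - \sum_i [\nabla_\rho F(\rho^n)]_i\rho^n_i$, which is precisely the expression in \eqref{Smart}.

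The main obstacle I anticipate is the rigorous min-max exchange and the proof that the saddle point is attained in the interior of $[0,1]^{|V|}$: the functional $\tilde L$ is only lower semicontinuous with the $+\infty$ branch at $\theta=0$, $m\ne0$, so one needs (A2) to keep minimizing sequences of $\mathbf{m}$ bounded uniformly as $\theta$ approaches degeneracy, and (A7) to prevent $\mathbf{\rho}$ from accumulating on $\{0,1\}^{|V|}$. Once the saddle is shown to exist and to satisfy the Euler--Lagrange system in \eqref{Smart}, the reduction to the stated single-variable supremum over $\{\Phi_i\}\in\mathbb{R}^{|V|}$ is an algebraic substitution.
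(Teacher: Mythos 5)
Your proposal is correct and follows essentially the same route as the paper: restrict $\mathbf{m}$ to a compact set via the equi-coercivity in (A2), apply Sion's minimax theorem, eliminate $\mathbf{m}$ and $\rho^0$ by Legendre transform and discrete summation by parts, and then characterize the interior saddle in $(\rho^n,\Phi^n)$ by the first-order conditions, which are exactly the constraints in \eqref{Smart}. The only cosmetic difference is that the paper establishes attainment of the supremum over the intermediate $\Phi^n$ by substituting $V_i^n$ for the discrete Hamilton--Jacobi residual and invoking the coercivity of $F^*$ in (A6), whereas you argue directly from the stationarity conditions; also note that (A6) concerns $F^*$, not $G^*$, so the $\rho^0$-infimum is handled by (A4) and compactness of $[0,1]^{|V|}$ alone.
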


\begin{remark} 
We remark that if $( \{ \rho^n_i\} , \{ \Phi^n_i \} )$ are computed according to the constraints given in \eqref{Smart} for all $n=0,..,N-1$, then for each $n$, the numerical Hamiltonian  
$$ \mathbb{H}(\rho^{n}, \Phi^n)  =  \sum_{i+\frac{e_v}{2}\in E}   H  \left(\frac{1}{\Delta x}(\Phi_i^n - \Phi_{i+e_v}^n) \right)   \theta^n_{i+\frac{1}{2}e_v}+  F( \{\rho\}_i^n )  $$
is conserved, where we write $\rho^{n} := \{ \rho^{n}_i\}$ and $\Phi^n := \{ \Phi^n_i\}$.
\end{remark}
\begin{remark} 
If $F(\rho)=0$, \eqref{Smart} is an unstable scheme for initial value Hamilton-Jacobi equations. 
In computations, we handle it using a monotone scheme; see %by the Lax-Friedrichs viscosity in
section \ref{section4} (Remark \ref{rm10}) below.   
\end{remark}

\begin{remark}
We note in numerical examples in Section \ref{sec5} that our formula appears to be valid beyond the assumptions (A1)-(A7), e.g., in the case when $H$ is a nonsmooth, nonconvex Hamiltonian.   
The continuous analog of \eqref{Smart} is discussed and proposed in Section 2.2.
The minimal assumptions of validity for \eqref{Smart} to hold may be an interesting direction to explore, and some possibilities are discussed in, e.g., \cite{Hopf_Lax_4, Hopf_MFG, proofimportant}.
\end{remark}

We prove Theorem \ref{thm} by showing the following three lemmas. %We first have the following lemma:
\begin{Lemma}\label{lem1}
Assume (A1). Then, the functional $\mathcal{L} (m,\rho)$ is convex.
\end{Lemma}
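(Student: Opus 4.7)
The plan is to recognize that $\tilde{L}(m,\theta)$ is precisely the \emph{perspective function} of $L$, and that convexity of such perspectives is a classical fact in convex analysis. Since $\theta_{i+\frac{1}{2}e_v} = \tfrac{1}{2}(\rho_i+\rho_{i+e_v})$ is an affine function of $\rho$, each summand in $\mathcal{L}(m,\rho)$ is a convex function composed with an affine map, and a finite sum of convex functions is convex.

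First I would reduce the claim to showing that the single-edge perspective
\[
\tilde L : \mathbb{R} \times [0,\infty) \to \mathbb{R}\cup\{+\infty\}, \qquad
\tilde L(m,\theta)=\begin{cases} L(m/\theta)\,\theta, & \theta>0,\\ 0, & \theta=0,\,m=0,\\ +\infty, & \text{otherwise},\end{cases}
\]
is jointly convex in $(m,\theta)$. The standard route is to verify the convex combination inequality directly: take $(m_1,\theta_1),(m_2,\theta_2)$ in $\mathbb{R}\times[0,\infty)$ and $\lambda\in[0,1]$, and set $\theta_\lambda=\lambda\theta_1+(1-\lambda)\theta_2$, $m_\lambda=\lambda m_1+(1-\lambda)m_2$. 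When $\theta_1,\theta_2>0$, one writes
\[
\frac{m_\lambda}{\theta_\lambda} = \frac{\lambda\theta_1}{\theta_\lambda}\cdot\frac{m_1}{\theta_1} + \frac{(1-\lambda)\theta_2}{\theta_\lambda}\cdot\frac{m_2}{\theta_2},
\]
which is a convex combination since the two coefficients are nonnegative and sum to $1$; convexity of $L$ (assumption (A1)) then yields
\[
L\!\left(\frac{m_\lambda}{\theta_\lambda}\right)\theta_\lambda \le \lambda L\!\left(\frac{m_1}{\theta_1}\right)\theta_1 + (1-\lambda)L\!\left(\frac{m_2}{\theta_2}\right)\theta_2.
\]

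Next I would handle the boundary cases $\theta_1=0$ or $\theta_2=0$. If $\theta_j=0$, finiteness of $\tilde L(m_j,\theta_j)$ forces $m_j=0$, so the convex combination inequality collapses to $\tilde L(m_\lambda,\theta_\lambda)\le(1-\lambda)\tilde L(m_2,\theta_2)$ (or the analogous expression), which holds trivially either because $\theta_\lambda>0$ and $m_\lambda/\theta_\lambda=m_2/\theta_2$, or because $\theta_\lambda=0$ and $m_\lambda=0$ so the left-hand side is $0$. The one case that requires care is when the right-hand side equals $+\infty$, but then the inequality is vacuous; lower semicontinuity inherited from (A1) guarantees no issues at the boundary.

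Finally I would conclude: since $(m,\rho)\mapsto (m_{i+\frac{1}{2}e_v},\theta_{i+\frac{1}{2}e_v})$ is an affine map from $\mathbb{R}^{|E|}\times[0,1]^{|V|}$ into $\mathbb{R}\times[0,\infty)$ (because $\theta_{i+\frac{1}{2}e_v}=\tfrac{1}{2}(\rho_i+\rho_{i+e_v})\ge 0$), each term $\tilde L(m_{i+\frac{1}{2}e_v},\theta_{i+\frac{1}{2}e_v})$ is convex in $(m,\rho)$, and summing over edges preserves convexity. The main (minor) obstacle is the careful bookkeeping at the degenerate points where some $\theta=0$; that is where the specific extension in the definition of $\tilde L$ is crucial and where assumption (A2) (in particular $\lim_{y\to 0^+}L(x/y)y=\infty$ for $x\ne 0$) makes the extension the correct lsc convex envelope.
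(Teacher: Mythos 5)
Your proof is correct, but it takes a genuinely different route from the paper's. The paper's argument is a second-derivative test: it reduces to convexity of the single-edge term and then computes, for $y>0$,
\[
\textrm{Hess}\left(L\left(\tfrac{x}{y}\right)y\right)= L''\left(\tfrac{x}{y}\right) \begin{pmatrix}
\tfrac{1}{y} & -\tfrac{x}{y^2} \\
-\tfrac{x}{y^2} & \tfrac{x^2}{y^3}
\end{pmatrix}\succeq 0,
\]
concluding from $L''\ge 0$. You instead verify the convex-combination inequality directly via the identity expressing $m_\lambda/\theta_\lambda$ as a convex combination of $m_1/\theta_1$ and $m_2/\theta_2$ with weights $\lambda\theta_1/\theta_\lambda$ and $(1-\lambda)\theta_2/\theta_\lambda$ --- the classical perspective-function argument. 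Your route buys two things the paper's does not: it needs only the convexity assumed in (A1) rather than the twice differentiability the Hessian computation silently requires (note (A1) only posits a proper, lsc, convex $L$, so the paper's proof is strictly speaking arguing under an unstated smoothness hypothesis), and it explicitly treats the boundary cases $\theta_j=0$, where the specific $0$/$+\infty$ extension in the definition of $\tilde L$ must be checked; the paper's computation only covers the open region $y>0$. The paper's proof is shorter and more transparent when $L\in C^2$. One small caveat on your final remark: assumption (A2) is not needed for convexity of the extension --- the assignment $\tilde L(0,0)=0$, $\tilde L(m,0)=+\infty$ for $m\ne 0$ is convex regardless, as your own case analysis shows (the $+\infty$ cases make the inequality vacuous); (A2) is only relevant if you additionally want the extension to coincide with the lower semicontinuous hull, which the lemma does not claim.
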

\begin{proof}
We shall show that $\mathcal{L}$ is convex. Since 

\begin{equation*}
  \mathcal{L}(m,\rho) :=  \sum_{i+\frac{e_v}{2}\in E} \tilde{L} \left( m_{i+\frac{1}{2}e_v} , \theta_{i+\frac{1}{2}e_v} \right) 
  \end{equation*}
We only need to show that $ \tilde{L}$ is convex.  
%\begin{equation*}
% \tilde{L} \left( m_{i+\frac{1}{2}e_v} , \theta_{i+\frac{1}{2}e_v} \right)  := 
%  \begin{cases}
% L\left(\frac{m_{i+\frac{1}{2}e_v}}{ \theta_{i+\frac{1}{2}e_v}}\right) \theta_{i+\frac{1}{2}e_v} & \textrm{if $ \theta_{i+\frac{1}{2}e_v}>0$}\ ;\\
% 0 & \textrm{if $ \theta_{i+\frac{1}{2}e_v}=0$ and  $m_{i+\frac{e_v}{2}}=0$\ ;}\\
% +\infty & \textrm{otherwise}\ .
% \end{cases}
%  \end{equation*}
In other words, for $y>0$,  $L(\frac{x}{y})y$ is convex for $(x,y)$. In fact, that is true since 
$$ \textrm{Hess}\left(L\left(\frac{x}{y}\right)y\right)= L''\left(\frac{x}{y}\right) \begin{pmatrix} 
\frac{1}{y} & -\frac{x}{y^2} \\
-\frac{x}{y^2} & \frac{x^2}{y^3}
\end{pmatrix}\succeq 0\ .
$$
\end{proof}

We now proceed as in \cite{Hopf_Lax_4} and obtain the following lemma. This lemma is similar to the primal-dual formulation in \cite{MFG1,MFG2, MFG}, but we provide it here for the sake of completeness.

\begin{Lemma} \label{hahalolXD}
Write 
$$ (\mathbf{m}, \mathbf{\rho},\mathbf{\Phi}) := 
\left( \{ m^n_{i+\frac{1}{2}e_v}  \}_{ n=0}^{N-1} , \{ \rho^n_i \}_{ n=0}^{N-1}  ,   \{ \Phi^n_i \}_{ n=1}^{N}  \right)
 \in \mathbb{R}^{|E| N}  \times  [0,1] ^{|V| N}  \times \mathbb{R} ^{|V| N}  \,.
 $$
and
$$ ( \tilde{ \mathbf{\rho} },\mathbf{\Phi}) := 
\left( \{ \rho^n_i \}_{ n=0}^{N-1}  ,   \{ \Phi^n_i \}_{ n=1}^{N}  \right)
 \in  [0,1] ^{|V| N}  \times \mathbb{R} ^{|V| (N-1)}  \,.
 $$
and let 
$ \mathcal{F} (\mathbf{m}, \mathbf{\rho},\mathbf{\Phi}) $ given in \eqref{orz}, and
\beqnx
\tilde{\mathcal{F}} ( \tilde{ \mathbf{\rho} },\mathbf{\Phi})  &:=&  
- \sum_{n=1}^N \sum_{i+\frac{e_v}{2}\in E}   H  \left(\frac{1}{\Delta x}(\Phi_i^n - \Phi_{i+e_v}^n) \right)   \theta^n_{i+\frac{1}{2}e_v}
 \Delta t     - \sum_{n=1}^N \Delta t F( \{\rho\}_i^n )  -  G^*( \{\Phi\}_i^0 ) \\
& & \hspace{3cm}  + \sum_{n=1}^{N-1} \sum_{i \in V}\left(  \Phi_i^{n} -  \Phi_i^{n+1} \right)    \rho^{n}_i + \sum_{i \in V} \Phi_i^{N}  \rho_i   
\eqnx
If (A1), (A2), (A3), (A4) are satisfied, then 
%we have
%then we have
\begin{eqnarray}
\begin{split}
&\inf_{\{ m^n_{i+\frac{1}{2}e_v}  \}_{ n=0}^{N-1}  \in \mathbb{R}^{|E| N} \, , \, \{ \rho^n_i \}_{ n=0}^{N-1}   \in  [0,1] ^{|V| N}  }  \sup_{ \{ \Phi^n_i \}_{ n=1}^{N}   \in \mathbb{R} ^{|V| N}  } \, \mathcal{F} (\mathbf{m}, \mathbf{\rho},\mathbf{\Phi})\\
 =&  \sup_{ \{ \Phi^n_i \}_{ n=1}^{N}   \in \mathbb{R} ^{|V| N}  }   \inf_{\{ \rho^n_i \}_{ n=1}^{N-1}   \in  [0,1] ^{|V| (N-1)}  }\tilde{\mathcal{F}} ( \tilde{ \mathbf{\rho} },\mathbf{\Phi})   \,. 
\end{split}
\label{wawawawa} 
\end{eqnarray}
\end{Lemma}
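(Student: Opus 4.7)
The plan is to apply convex duality to interchange the infimum over $(\mathbf{m}, \mathbf{\rho})$ and the supremum over $\mathbf{\Phi}$ in the Lagrangian $\mathcal{F}$, then perform the inner minimization over $\mathbf{m}$ and over $\rho^0$ explicitly to recover the claimed $\tilde{\mathcal{F}}$.

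First I would verify the convex--concave structure. By Lemma \ref{lem1}, $\mathcal{L}(m,\rho)$ is jointly convex in $(m,\rho)$; by (A3), $-F$ is convex in $\rho$; by (A4), $G$ is convex. The constraint-coupling term $\sum_{n,i} \Phi_i^{n+1}\bigl(\rho^{n+1}_i - \rho^n_i + \Delta t\,\textrm{div}(m^{n+1})|_i\bigr)$ is bilinear, so $\mathcal{F}$ is convex in $(\mathbf{m},\tilde{\mathbf{\rho}})$ and affine (hence concave) in $\mathbf{\Phi}$ on the closed convex product domain $\mathbb{R}^{|E|N}\times[0,1]^{|V|N}\times\mathbb{R}^{|V|N}$. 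The interchange of $\inf$ and $\sup$ would then follow from Fenchel--Rockafellar duality applied to the primal program \eqref{finite}, which has a convex cost and affine equality constraints; a Slater-type constraint qualification is immediate by taking any feasible $(m,\rho)$ with $\rho^n$ in the interior of $[0,1]^{|V|}$ and $m\equiv 0$, while coercivity from (A2) together with lower/upper semi-continuity from (A1), (A3), (A4) ensures that the primal infimum is attained and no duality gap arises.

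After swapping, for fixed $\mathbf{\Phi}$ I would first minimize over $\mathbf{m}$. Discrete summation by parts on the torus transfers the divergence onto $\Phi$, producing the edgewise coupling $\sum_{n,i+\frac{e_v}{2}} \Delta t\, m^{n+1}_{i+\frac{1}{2}e_v}\cdot \tfrac{1}{\Delta x}(\Phi_i^{n+1} - \Phi_{i+e_v}^{n+1})$, so the $\mathbf{m}$-minimization decouples edge by edge. On edges with $\theta^n_{i+\frac{1}{2}e_v}>0$, the substitution $v=m_e/\theta_e$ and the Legendre-transform definition of $H$ give
\[
\inf_{m_e}\bigl[\tilde L(m_e,\theta_e)+m_e q_e\bigr] \;=\; -\theta_e\, H\!\left(\tfrac{1}{\Delta x}(\Phi^n_i-\Phi^n_{i+e_v})\right),
\]
with finiteness and attainment coming from (A2); on edges with $\theta_e=0$ the definition of $\tilde L$ forces $m_e=0$ and both sides vanish. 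Next, minimizing over $\rho^0$ — whose only contribution is $G(\rho^0) - \sum_i \Phi_i^1 \rho_i^0$ — produces $-G^*(\Phi^1)$ directly from the definition of the Fenchel conjugate in (A4). The remaining variables $\{\rho^n\}_{n=1}^{N-1}$ stay in the inner $\inf$, and collecting every term while inserting $\rho^N_i=\rho_i$ in the boundary contribution reassembles exactly $\tilde{\mathcal{F}}(\tilde{\mathbf{\rho}},\mathbf{\Phi})$, yielding \eqref{wawawawa}.

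I expect the main obstacle to be making the $\inf$--$\sup$ swap fully rigorous: because $\mathbf{\Phi}$ lives in the unbounded space $\mathbb{R}^{|V|N}$, compact-based minimax principles such as Sion's do not apply directly, and one must rely on the Fenchel--Rockafellar route sketched above. The delicate point is verifying the constraint qualification and controlling the behaviour near the boundary $\partial[0,1]^{|V|}$, where $\theta_e$ may vanish and $\tilde L$ acquires the extended-real values prescribed in its definition; this is exactly where assumptions (A1)--(A4) (with the equi-coercivity of (A2)) enter, and where the proof follows the template of \cite{Hopf_Lax_4}.
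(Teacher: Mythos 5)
Your overall architecture matches the paper's: interchange the $\inf$ and $\sup$, then eliminate $\mathbf{m}$ edge by edge via the Legendre transform $H(p)=\sup_v\{pv-L(v)\}$ after a discrete summation by parts, and eliminate $\rho^0$ via the conjugate $G^*$, leaving $\{\rho^n\}_{n=1}^{N-1}$ inside the inner infimum; that portion of your argument is essentially the same computation the paper performs. Where you genuinely diverge is the justification of the swap. You assert that Sion's theorem ``does not apply directly'' because $\mathbf{\Phi}$ ranges over the unbounded set $\mathbb{R}^{|V|N}$, and you substitute a Fenchel--Rockafellar argument. This is a misreading of Sion: the theorem requires compactness of only \emph{one} of the two sets, and the paper arranges exactly that on the $\inf$ side --- $[0,1]^{|V|N}$ is already compact, and the equi-coercivity in (A2) lets one replace $\mathbb{R}^{|E|N}$ by a compact box $C^{|E|N}$ without changing the value of either the inf-sup or the sup-inf. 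With $\mathcal{F}$ lower semicontinuous and quasi-convex in $(\mathbf{m},\mathbf{\rho})$ by (A1), (A3), (A4) and affine in $\mathbf{\Phi}$, Sion then gives the interchange directly. Your Fenchel--Rockafellar route is viable in principle (the program is convex with affine equality constraints), but as written it is only a sketch at the one point where care is needed: the constraint qualification. The effective domain of the cost degenerates on the boundary of $[0,1]^{|V|N}$ (where $\theta_e=0$ forces $m_e=0$ and $\tilde L$ is $+\infty$ otherwise), and your proposed Slater point $m\equiv 0$ forces $\rho^n\equiv\rho$ for all $n$ through the continuity equation, so the existence of a feasible interior point hinges on the prescribed terminal datum $\rho$ lying in the interior of $[0,1]^{|V|}$ --- a hypothesis you would need to add or circumvent. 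The paper's compactification-plus-Sion argument sidesteps this entirely, so I would adopt it; your edgewise Legendre computation and the $\rho^0$ conjugation can then be kept as written.
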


\begin{proof}
In fact, from the equi-coercivity (A2), we find that there exists a closed and bounded interval $C \subset \mathbb{R}$ s.t.
\beqnx
\inf_{\{ m^n_{i+\frac{1}{2}e_v}  \}_{ n=0}^{N-1}  \in \mathbb{R}^{|E| N} \, , \, \{ \rho^n_i \}_{ n=0}^{N-1}   \in  [0,1] ^{|V| N}  }  \sup_{ \{ \Phi^n_i \}_{ n=1}^{N}   \in \mathbb{R} ^{|V| N}  } \, \mathcal{F} (\mathbf{m}, \mathbf{\rho},\mathbf{\Phi}) \\
=
\inf_{\{ m^n_{i+\frac{1}{2}e_v}  \}_{ n=0}^{N-1}  \in C^{|E| N} \, , \, \{ \rho^n_i \}_{ n=0}^{N-1}   \in  [0,1] ^{|V| N}  }  \sup_{ \{ \Phi^n_i \}_{ n=1}^{N}   \in \mathbb{R} ^{|V| N}  } \, \mathcal{F} (\mathbf{m}, \mathbf{\rho},\mathbf{\Phi})
\eqnx
Now that $ \mathcal{F} (\mathbf{m}, \mathbf{\rho},\mathbf{\Phi}) $ is lower-semicontinuous and quasi-convex w.r.t. $(\mathbf{m}, \mathbf{\rho} ) $ (from (A1), (A3) and (A4)) and upper-semicontinuous and quasi-concave w.r.t. $ \mathbf{\Phi} $ (from linearity), as well as $ C^{|E| N} \times [0,1] ^{|V| N} $, we have by an application of Sion's minimax theorem \cite{sion1,sion2} that
 \beqnx
\inf_{\{ m^n_{i+\frac{1}{2}e_v}  \}_{ n=0}^{N-1}  \in C^{|E| N} \, , \, \{ \rho^n_i \}_{ n=0}^{N-1}   \in  [0,1] ^{|V| N}  }  \sup_{ \{ \Phi^n_i \}_{ n=1}^{N}   \in \mathbb{R} ^{|V| N}  } \, \mathcal{F} (\mathbf{m}, \mathbf{\rho},\mathbf{\Phi})\\
=
\sup_{ \{ \Phi^n_i \}_{ n=1}^{N}   \in \mathbb{R} ^{|V| N}  }  \inf_{\{ m^n_{i+\frac{1}{2}e_v}  \}_{ n=0}^{N-1}  \in C^{|E| N} \, , \, \{ \rho^n_i \}_{ n=0}^{N-1}   \in  [0,1] ^{|V| N}  }  \, \mathcal{F} (\mathbf{m}, \mathbf{\rho},\mathbf{\Phi}) \\
=
\sup_{ \{ \Phi^n_i \}_{ n=1}^{N}   \in \mathbb{R} ^{|V| N}  }  \inf_{\{ m^n_{i+\frac{1}{2}e_v}  \}_{ n=0}^{N-1}  \in \mathbb{R}^{|E| N} \, , \, \{ \rho^n_i \}_{ n=0}^{N-1}   \in  [0,1] ^{|V| N}  }  \, \mathcal{F} (\mathbf{m}, \mathbf{\rho},\mathbf{\Phi})
\eqnx
where the last equality is again obtained by equi-coercivity in (A2).

Now let us fix $ ( \tilde{ \mathbf{\rho} },\mathbf{\Phi}) = (  \{ \rho^n_i \}_{ n=1}^{N-1} ,  \{ \Phi^n_i \}_{ n=1}^{N}  )  $, and consider the optimization
\beqnx
 \inf_{\{ m^n_{i+\frac{1}{2}e_v}  \}_{ n=0}^{N-1}  \in \mathbb{R}^{|E| N} \, , \, \{ \rho^0_i \}  \in  [0,1] ^{|V|}  }  \, \mathcal{F} (\mathbf{m}, \mathbf{\rho},\mathbf{\Phi}) \,.
\eqnx

We next derive its duality formula. Following the discrete integration by part, then
\beqnx
& & \inf_{\{ m^n_{i+\frac{1}{2}e_v}  \}_{ n=0}^{N-1}  \in \mathbb{R}^{|E| N} \, , \, \{ \rho^0_i \}  \in  [0,1] ^{|V|}  }  \, \mathcal{F} (\mathbf{m}, \mathbf{\rho},\mathbf{\Phi}) \\
&=&
\inf_{\{ m^n_{i+\frac{1}{2}e_v}  \}_{ n=0}^{N-1}  \in \mathbb{R}^{|E| N} \, , \, \{ \rho^0_i \}  \in  [0,1] ^{|V|}  }  \,
\Bigg\{ 
 \sum_{n=0}^{N-1} \sum_{i+\frac{e_v}{2}\in E} \Delta t \,  { \mathcal{L} } \left( m_{i+\frac{1}{2}e_v}^{n+1} , \theta_{i+\frac{1}{2}e_v}^{n+1} \right)   - \sum_{n=0}^{N-1} \Delta t \,  F( \{\rho\}_i^{n+1} ) +  G( \{\rho\}_i^0 )  \\
& &  \quad \quad  \quad \quad \quad  \quad \quad \quad  \quad \quad \quad  \quad \quad  + \sum_{n=0}^{N-1} \sum_{i \in V} \Phi_i^{n+1} \left(  \rho^{n+1}_i - \rho^n_i +  \Delta t\cdot\textrm{div}(m^{n+1})|_i  \right)  \Bigg \} \\
&=&
\inf_{ \{ \rho^0_i \}  \in  [0,1] ^{|V|}  } 
 \,
\Bigg\{ 
 \sum_{n=0}^{N-1} \sum_{i+\frac{e_v}{2}\in E} \Delta t \,
 \inf_{ m_{i+\frac{1}{2}e_v} } 
\Bigg\{  L    \left(\frac{ m^{n+1}_{i+\frac{1}{2}e_v} }{ \theta^{n+1}_{i+\frac{1}{2}e_v}} \right)  \theta^{n+1}_{i+\frac{1}{2}e_v}   + \frac{1}{\Delta x}(\Phi_i^{n+1} - \Phi_{i+e_v }^{n+1} )   m^{n+1}_{i+\frac{1}{2}e_v}  \Bigg\} \\
& &  \quad \quad  \quad \quad \quad  - \sum_{n=0}^{N-1} \Delta t \,  F( \{\rho\}_i^{n+1} ) +  G( \{\rho\}_i^0 )   + \sum_{n=0}^{N-1} \sum_{i \in V} \Phi_i^{n+1} \left(  \rho^{n+1}_i - \rho^n_i  \right)  
 \Bigg \} \\
 \eqnx
where the last equality is from the spatial integration by parts for $\sum_{n=0}^{N-1} \sum_{i \in V} \Phi_i^{n+1} \textrm{div}(m^{n+1})|_i $. From the Legendre transform 
$$H(p)=\sup_{v\in\mathbb{R}^1}~pv-L(v)$$ with $p=\frac{1}{\Delta x}(\Phi_{i+e_v}^n - \Phi_i^n)$ and $v=\frac{ m^{n+1}_{i+\frac{1}{2}e_v} }{ \theta^{n+1}_{i+\frac{1}{2}e_v}}$, 
we have 
 \beqnx
 & & \inf_{\{ m^n_{i+\frac{1}{2}e_v}  \}_{ n=0}^{N-1}  \in \mathbb{R}^{|E| N} \, , \, \{ \rho^0_i \}  \in  [0,1] ^{|V|}  }  \, \mathcal{F} (\mathbf{m}, \mathbf{\rho},\mathbf{\Phi}) \\
&=&
\inf_{ \{ \rho^0_i \}  \in  [0,1] ^{|V|}  } 
\Bigg\{ 
 - \sum_{n=1}^{N} \sum_{i+\frac{e_v}{2}\in E} H \left(\frac{1}{\Delta x}(\Phi_i^n - \Phi_{i+e_v}^n) \right)   \theta^{n}_{i+\frac{1}{2}e_v}  \Delta t   - \sum_{n=1}^{N} \,  F( \{\rho\}_i^{n+1} )  \Delta t   \\
& &  \quad \quad  \quad \quad \quad +  G( \{\rho\}_i^0 )   + \sum_{n=0}^{N-1} \sum_{i \in V} \Phi_i^{n+1} \left(  \rho^{n+1}_i - \rho^n_i  \right)  \Bigg \} \\
&=&
\inf_{ \{ \rho^0_i \}  \in  [0,1] ^{|V|}  } 
\Bigg\{ 
 - \sum_{n=1}^{N} \sum_{i+\frac{e_v}{2}\in E} H \left(\frac{1}{\Delta x}(\Phi_i^n - \Phi_{i+e_v}^n) \right)   \theta^{n}_{i+\frac{1}{2}e_v}  \Delta t   - \sum_{n=1}^{N} \,  F( \{\rho\}_i^{n+1} )  \Delta t  +  G( \{\rho\}_i^0 )  \\
& &  \quad \quad  \quad \quad \quad   + \sum_{n=1}^{N-1} \sum_{i \in V}\left(  \Phi_i^{n} -  \Phi_i^{n+1} \right)    \rho^{n}_i + \sum_{i \in V} \Phi_i^{N}  \rho_i   - \sum_{i \in V} \Phi_i^{0}  \rho^0_i   \Bigg \} \\
&=& \tilde{\mathcal{F}} ( \tilde{ \mathbf{\rho} },\mathbf{\Phi}),
\eqnx
where the last line follows from the definition of Legendre transform for $\{\rho^0\}_i^V$.
\end{proof}

%%%\noindent {Remark 6:} 
\begin{remark}
We remark that \cite{MFG1,MFG2, MFG} utilized a similar version of the above lemma and computed the saddle point uses Newton's method.  However, since we aim to reduce the number of dimensions in our numerical scheme and also aim to handle nonsmooth cases, we do not stop at this formulation.
\end{remark}
%%Next, we would like to obtain the following lemma:

\begin{Lemma}  \label{hahalolXDsayonara}

If (A1), (A3),(A4), (A5), (A6),(A7) are satisfied, then 
\begin{eqnarray*}
& & \sup_{ \{ \Phi^n_i \}_{ n=1}^{N}   \in \mathbb{R} ^{|V| N}  }   \inf_{\{ \rho^n_i \}_{ n=1}^{N-1}   \in  [0,1] ^{|V| (N-1)}  }\tilde{\mathcal{F}} ( \tilde{ \mathbf{\rho} },\mathbf{\Phi})   \notag  \\
&=& \sup_{\{ \Phi_i \}  \in \mathbb{R} ^{|V| }    }    \Bigg \{ 
 \sum_{i\in V} \Phi_i^{N}  \rho_i   - \sum_{n=1}^N \Delta t \left( F( \{\rho\}_i^n ) -  \sum_i [\nabla_\rho F ( \{\rho\}_i^n ) ]_i \rho^n_i \right)     -  G^*( \{\Phi\}_i^0 )   :  \notag \\
& & \qquad \qquad
 \quad \quad  \begin{matrix}
  \rho^{n}_i - \rho^{n-1}_i +  \Delta t \sum_{v=1}^d D_p H \left(\frac{1}{\Delta x} (\Phi_i^n-\Phi_{i+e_v}^n) \right)    \theta^n_{i+\frac{1}{2}e_v}  = 0  \\
    \Phi^{n+1}_i - \Phi^{n}_i +  \frac{ \Delta t }{4} \sum_{v=1}^d  H  \left(\frac{1}{\Delta x}(\Phi_i^n - \Phi_{i+e_v}^n) \right) +\Delta t[\nabla_\rho F ( \{\rho\}_i^n ) ]_i  = 0 \\
     \rho^{N}_i = \rho_i, \, \Phi_i^N   = \Phi_i 
  \end{matrix}  \Bigg\}\ .
%\label{Smart}
\end{eqnarray*}
\end{Lemma}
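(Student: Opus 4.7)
The plan is to identify the saddle point of $\tilde{\mathcal{F}}$ via first-order optimality conditions and then substitute back to obtain a closed-form value depending only on the terminal adjoint $\{\Phi_i\}=\{\Phi^N_i\}$. The hypotheses (A1)--(A7) are precisely what is needed to make this KKT characterization rigorous: (A1), (A3), (A4) make $\tilde{\mathcal{F}}$ convex in $\tilde{\mathbf{\rho}}$ and concave in $\mathbf{\Phi}$ (the coupling terms are bilinear, $-F$ is convex in $\rho$ by (A3), $-G^*$ is concave in $\Phi^0$ by (A4), and each $-H(\cdot)\theta$ is concave in $\Phi$ since $\theta\ge 0$ when $\rho$ is fixed); (A5) supplies smoothness so the variational calculus applies; (A6) forces the maximum in $\Phi^0$ to be attained at a finite value; and (A7) drives the minimizing $\{\rho^n_i\}$ away from the boundary of $[0,1]^{|V|}$, so the KKT conditions collapse to interior equalities. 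Together with the convex--concave structure already exploited via Sion's theorem in Lemma \ref{hahalolXD}, these imply that the saddle point exists uniquely.

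\textbf{First, derive the two-point KKT system.} Differentiating $\tilde{\mathcal{F}}$ in $\rho^n_i$ for $n=1,\ldots,N-1$ and collecting the contributions of the $\tfrac{1}{2}(\rho^n_i+\rho^n_{i+e_v})$ factor over the $2d$ edges containing the node $i$ yields the discrete backward Hamilton--Jacobi equation
\begin{equation*}
\Phi^{n+1}_i - \Phi^n_i + \frac{\Delta t}{4}\sum_{v=1}^d H\!\left(\frac{\Phi^n_i - \Phi^n_{i+e_v}}{\Delta x}\right) + \Delta t\,[\nabla_\rho F(\rho^n)]_i = 0.
\end{equation*}
Symmetrically, differentiating in $\Phi^n_i$ for $n=1,\ldots,N-1$ and using the Legendre identity $\nabla_p H = D_p H$ produces the discrete forward continuity equation
\begin{equation*}
\rho^n_i - \rho^{n-1}_i + \Delta t \sum_{v=1}^d D_p H\!\left(\frac{\Phi^n_i - \Phi^n_{i+e_v}}{\Delta x}\right)\theta^n_{i+e_v/2} = 0,
\end{equation*}
while the condition in $\Phi^0_i$ is the Legendre relation $\rho^0 \in \partial G^*(\Phi^0)$. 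Together with the boundary data $\rho^N=\rho$ and $\Phi^N=\Phi$, this is exactly the constrained system displayed in \eqref{Smart}: starting from $(\rho^N,\Phi^N)$ the HJ equation and the continuity equation propagate $(\rho^n,\Phi^n)$ backward in $n$ down to $n=0$, uniquely determining every interior variable as a functional of $\Phi$.

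\textbf{Second, substitute and simplify.} Using the HJ equation we rewrite the duality term $\sum_{n=1}^{N-1}\sum_i(\Phi^n_i-\Phi^{n+1}_i)\rho^n_i$ as $\Delta t\sum_{n,i}\bigl[\tfrac14\sum_v H(\cdots) + [\nabla_\rho F(\rho^n)]_i\bigr]\rho^n_i$. Expanding $\theta^n_{i+e_v/2}=\tfrac12(\rho^n_i+\rho^n_{i+e_v})$ in the Hamiltonian term $-\sum_{n,i,v}\Delta t\,H(\cdots)\theta^n_{i+e_v/2}$ and re-indexing neighbouring nodes via the toroidal identification causes the nonlinear $H$-contributions of the two sums to cancel, leaving only the $[\nabla_\rho F(\rho^n)]_i\rho^n_i$ correction. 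The latter combines with the $-\Delta t F(\rho^n)$ terms into $-\Delta t\bigl(F(\rho^n)-\sum_i[\nabla_\rho F(\rho^n)]_i\rho^n_i\bigr)$, while the boundary pieces $\sum_i\Phi^N_i\rho_i$ and $-G^*(\Phi^0)$ pass through unchanged. Inserting the KKT constraints and retaining $\Phi=\Phi^N$ as the sole free variable then reproduces the right-hand side of the lemma.

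\textbf{Main obstacle.} The most delicate step is the bookkeeping in the substitution: one must verify that the averaging factor $\tfrac14$ arising from differentiating through $\theta$ combines precisely with the $\tfrac12$ defining $\theta$, under the discrete summation-by-parts on the torus, to cancel the nonlinear $H$-terms and leave only the Legendre-type residual $F(\rho^n)-\sum_i[\nabla_\rho F]_i\rho^n_i$. This is where the specific structure of the edge-centered discretization enters essentially. Secondary technical points are to check that the unique minimizers $\{\rho^n_i\}$ remain in the open cube $(0,1)^{|V|}$---for which (A7) is invoked to make $|\nabla_\rho F|$ blow up on $\partial[0,1]^{|V|}$---and that the maximum in $\Phi^0$ is attained at a finite value, guaranteed by the coercivity (A6) of $F^*$.
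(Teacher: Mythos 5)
Your proposal is correct in outline and follows essentially the same strategy as the paper: characterize the inner $\sup\inf$ by first-order conditions, using (A3) and (A7) to force the minimizing $\{\rho^n_i\}$ into the interior of $[0,1]^{|V|(N-1)}$ (so that stationarity in $\rho$ gives the discrete Hamilton--Jacobi equation), using (A5) for the smooth dependence $\tilde{\mathbf{\rho}}^*(\tilde{\mathbf{\Phi}},\Phi^N)$ needed to differentiate in $\Phi$ (yielding the discrete continuity equation), and then evaluating $\tilde{\mathcal{F}}$ on the resulting two-point system. Your explicit ``substitute and simplify'' bookkeeping --- re-indexing on the torus so that the $H$-contributions of the duality sum cancel against the $H\theta$ sum, leaving only the residual $F(\rho^n)-\sum_i[\nabla_\rho F(\rho^n)]_i\rho^n_i$ --- is in fact carried out in more detail than in the paper, which leaves that algebra implicit. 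The one substantive weakness is how you obtain attainment of the supremum over the interior variables $\{\Phi^n_i\}_{n=1}^{N-1}$, which range over a noncompact set: you assert existence of the saddle from convex--concavity ``together with Sion'' and assign (A6) to finiteness of the maximum in $\Phi^0$, but Sion's theorem (already spent in Lemma \ref{hahalolXD}) gives only equality of $\inf\sup$ and $\sup\inf$, not attainment on an unbounded domain, and without attainment the step of setting the $\Phi$-gradient to zero is unjustified. The paper's device is precisely to close this gap: rewrite the inner infimum as $-\sum_n\Delta t\,F^*(V^n)-G^*(\{\Phi\}^0_i)+\sum_i\Phi^N_i\rho_i$ via the Legendre transform of $F$, change variables from $\{\Phi^n_i\}$ to $V^{n-1}_i:=(\Phi^{n+1}_i-\Phi^n_i)/\Delta t+\frac14\sum_v H(\cdot)$ by solving the difference equation, and invoke the coercivity of $F^*$ in (A6) to produce a maximizer $V^*$, which is then pulled back to $\Phi^*$. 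You should either reproduce this coercivity argument or supply an equivalent one; with that addition your proof matches the paper's.
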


\begin{proof}

Writing $ ( \tilde{ \mathbf{\rho} },\tilde{\mathbf{\Phi}}) = (  \{ \Phi^n_i \}_{ n=1}^{N-1}  ,  \{ \rho^n_i \}_{ n=1}^{N-1}  )  $, then we have
\beqnx
 \sup_{ \{ \Phi^n_i \}_{ n=1}^{N}   \in \mathbb{R} ^{|V| N}  }   \inf_{\{ \rho^n_i \}_{ n=1}^{N-1}   \in  [0,1] ^{|V| (N-1)}  }\tilde{\mathcal{F}} ( \tilde{ \mathbf{\rho} },\mathbf{\Phi})  = \sup_{ \{ \Phi^N_i \}  \in \mathbb{R} ^{N}  }   \sup_{ \{ \Phi^n_i \}_{ n=1}^{N-1}   \in \mathbb{R} ^{|V| (N-1)}  }   \inf_{\{ \rho^n_i \}_{ n=1}^{N-1}   \in  [0,1] ^{|V| (N-1)}  } \tilde{\mathcal{F}} ( \tilde{ \mathbf{\rho} }, \tilde{\mathbf{\Phi}}, \Phi^N)   \notag.
\eqnx
Given $(\tilde{\mathbf{\Phi}}, \Phi^N_i)$, from (A3) and (A7), we have that the infimum 
$$
 \inf_{\{ \rho^n_i \}_{ n=1}^{N-1}   \in  [0,1] ^{|V| (N-1)}  } \tilde{\mathcal{F}} ( \tilde{ \mathbf{\rho} }, \tilde{\mathbf{\Phi}}, \Phi^N)  
$$
is attained in the interior of the domain $[0,1] ^{|V| (N-1)} $. From the smoothness given by (A5), there exists $ \tilde{ \mathbf{\rho} }^* ( \tilde{\mathbf{\Phi}}, \Phi^N ) $ smoothly depending on $( \tilde{\mathbf{\Phi}}, \Phi^N_i )$ such that 
$$
  \frac{\Phi^{n+1}_i - \Phi^{n}_i}{\Delta t} +  \frac{1}{4} \sum_{v=1}^d  H  \left(\frac{1}{\Delta x}(\Phi_i^n - \Phi_{i+e_v}^n) \right) +[\nabla_\rho F ( \{\rho^*( \tilde{\mathbf{\Phi}}, \Phi^N )  \}_i^n ) ]_i  = 0 ,$$
holds.  Now let us fix $\Phi^N$. From the definition, we have 
\beqnx
& &  \inf_{\{ \rho^n_i \}_{ n=1}^{N-1}   \in  [0,1] ^{|V| (N-1)}  } \tilde{\mathcal{F}} ( \tilde{ \mathbf{\rho} }, \tilde{\mathbf{\Phi}}, \Phi^N)   \\
& = & - \sum_{n=1}^N  \Delta t F^* \left(  \left\{   \frac{ \Phi^{n+1}_i - \Phi^{n}_i}{\Delta t} + \frac{1}{4} \sum_{v=1}^d  H  \left(\frac{1}{\Delta x}(\Phi_i^n - \Phi_{i+e_v}^n) \right) \right\}_i^n \right)
-  G^*( \{\Phi\}_i^0 ) + \sum_{i \in V} \Phi_i^{N}  \rho_i.
\eqnx
Now for any given $\{ V_i^n \}_{n=0}^{N-1}$, by solving the difference equation, there exists $\{ \Phi_i^n \}_{n=1}^{N-1}$ such that
\beqnx
 \frac{ \Phi^{n+1}_i - \Phi^{n}_i}{\Delta t} + \frac{1}{4} \sum_{v=1}^d  H  \left(\frac{1}{\Delta x}(\Phi_i^n - \Phi_{i+e_v}^n) \right)  = V_i^{n-1}.
\eqnx
Therefore we have
\beqn
& &\sup_{ \{ \Phi^n_i \}_{ n=1}^{N-1}   \in \mathbb{R} ^{|V| (N-1)}  }   \inf_{\{ \rho^n_i \}_{ n=1}^{N-1}   \in  [0,1] ^{|V| (N-1)}  } \tilde{\mathcal{F}} ( \tilde{ \mathbf{\rho} }, \tilde{\mathbf{\Phi}}, \Phi^N) \notag \\
&= & \sup_{ \{ V^n_i \}_{ n=1}^{N-1}   \in \mathbb{R} ^{|V| (N-1)}  }   \left \{ - \sum_{n=0}^{N-1}  \Delta t F^* \left(  V_i^n \right) 
-  G^*( \{\Phi\}_i^0 ) + \sum_{i \in V} \Phi_i^{N}  \rho_i    \right \} \label{wow_hopf} \,.
\eqn
Note that by (A6), the supremum in the above is attained, and hence there exists a maximum point $ \{ {V^*}_i^n  \}_{n=0}^{N-1} $ for the functional $ - \sum_{n=0}^{N-1}  F^* \left(  V_i^n \right)$.  Now go back to find $\{ {\Phi^*}_i^n \}_{n=1}^{N-1}$ such that 
\beqnx
 \frac{ {\Phi^*}^{n+1}_i - {\Phi^*}^{n}_i}{\Delta t} + \frac{1}{4} \sum_{v=1}^d  H  \left(\frac{1}{\Delta x}({\Phi^*}_i^n - {\Phi^*}_{i+e_v}^n) \right)  = {V^*}_i^{n-1}.
\eqnx
Now, fixing $\Phi^N$ and noticing $\{ {\Phi^*}_i^n \}_{n=1}^{N-1}$ is a maximum value of the function
\beqnx
\tilde{\mathbf{\Phi}} = \{ {\Phi}_i^n \}_{n=1}^{N-1}\,  \mapsto \inf_{\{ \rho^n_i \}_{ n=1}^{N-1}   \in  [0,1] ^{|V| (N-1)}  } \tilde{\mathcal{F}} ( \tilde{ \mathbf{\rho} }, \tilde{\mathbf{\Phi}}, \Phi^N)  =  \tilde{\mathcal{F}} (  \tilde{ \mathbf{\rho} }^* ( \tilde{\mathbf{\Phi}}, \Phi^N )  , \tilde{\mathbf{\Phi}}, \Phi^N),
\eqnx
we see that the maximum is attained and by (A5), $\{ {\Phi^*}_i^n \}_{n=1}^{N-1}$ can be characterized by
\beqnx
{\rho^*( \tilde{\mathbf{\Phi}}, \Phi^N ) }^{n}_i - {\rho^*( \tilde{\mathbf{\Phi}}, \Phi^N )  }^{n-1}_i +  \Delta t \sum_{v=1}^d D_p H \left(\frac{1}{\Delta x} ({\Phi^*}_i^n-{\Phi^*}_{i+e_v}^n) \right)   \theta^n_{i+\frac{1}{2}e_v  }( \tilde{\mathbf{\Phi}}, \Phi^N )     = 0.
\eqnx
Concluding the above argument, for each $\Phi^N$, we have $$
 \sup_{ \{ \Phi^n_i \}_{ n=1}^{N-1}   \in \mathbb{R} ^{|V| (N-1)}  }   \inf_{\{ \rho^n_i \}_{ n=1}^{N-1}   \in  [0,1] ^{|V| (N-1)}  } \tilde{\mathcal{F}} ( \tilde{ \mathbf{\rho} }, \tilde{\mathbf{\Phi}}, \Phi^N)
=   \tilde{\mathcal{F}} ( \tilde{ \mathbf{\rho} }^*(\Phi^N), \tilde{\mathbf{\Phi}}^*(\Phi^N), \Phi^N),$$ which satisfies the following pair of equations
\beqn
 \begin{cases}
  \rho^{n}_i - \rho^{n-1}_i +  \Delta t \sum_{v=1}^d D_p H \left(\frac{1}{\Delta x} (\Phi_i^n-\Phi_{i+e_v}^n) \right)    \theta^n_{i+\frac{1}{2}e_v}  = 0  \\
    \Phi^{n+1}_i - \Phi^{n}_i +  \frac{ \Delta t }{4} \sum_{v=1}^d  H  \left(\frac{1}{\Delta x}(\Phi_i^n - \Phi_{i+e_v}^n) \right) +\Delta t[\nabla_\rho F ( \{\rho\}_i^n ) ]_i  = 0 \\
     \rho^{N}_i = \rho_i, \, \Phi_i^N   = \Phi_i 
\end{cases}
\eqn
for all $n = 0,1,..,N-1$. 
The conclusion of the lemma thus follows.
\end{proof}

\begin{remark}  We note that taking supremum of \eqref{wow_hopf}  over $\Phi^N$ yields the discrete version of the Hopf formula given in (57)-(59) of \cite{Hopf_MFG}. However, we are not trying to get back to (57)-(59) in \cite{Hopf_MFG} since the formulation, though very elegant mathematically, contains too many variables for numerical optimization of a low memory requirement, and is thus not our first choice. % considering the our purpose to develop efficient numerical schemes.
\end{remark}
Combining all the three lemmas above, we prove Theorem \ref{thm}. In next section, we  apply the discrete Hopf formula \eqref{Smart} to design numerical methods for HJD.
\section{Algorithm}\label{section4}
In this section, we compute the optimizer in the Hopf formula in \eqref{Smart}. We shall perform the following multi-level block stochastic gradient descent method.

We first consider a sequence of step-size $h_i = 2^{-i}$ where $i = 0,\dots,N $ and a nested sequence of finite dimensional subspaces $  V_{h_0} \subset V_{h_2}  \subset \dots \subset V_{h_{N-1}}  \subset V_{h_N} $ of a function space over $[-1,1]^d \subset X$.
Now, we also define a family of restriction and extension operators:
\beqnx
R_{ij} : V_{h_i} \rightarrow V_{h_j} \quad  \text{ and } \quad E_{ji} : V_{h_j} \rightarrow V_{h_i}
\eqnx
Now, let us define the following approximation of the functional $G(\cdot)$ as
\beqnx
G_j~:~V_{h_j} &\rightarrow &\mathbb{R} \\
G_j(\tilde{\Phi}) &:=&   \int_X R_{Nj} [ \rho](x)  \tilde{\Phi}(x) dx - G^*(  \Phi(E_{jN} [\tilde{\Phi}], 0,\cdot) )
\\
& & -\int_0^t  \left( F(  \rho(E_{jN} [\tilde{\Phi}],s,\cdot)  ) -  \int_X [\nabla_\rho F ( \rho(E_{jN} [\tilde{\Phi}],s,\cdot)  ) ] \rho(E_{jN} [\tilde{\Phi}],s,x) dx \right) dt \\
\eqnx
where $(\rho(E_{jN} [\tilde{\Phi}], s,x) , \Phi(E_{jN} [\tilde{\Phi}], s,x) )$ numerically solves the following terminal value problem:
\beqnx
\begin{cases}
 \partial_s  \rho(E_{jN} [\tilde{\Phi}], s,x)  + \nabla \cdot (  \rho(E_{jN} [\tilde{\Phi}], s,x) D_pH(x, \nabla \Phi(E_{jN} [\tilde{\Phi}], s,x)   )  )=0\\%+ f(\rho(E_{jN} [\tilde{\Phi}], s,x)) = 0 \notag \\
 \partial_s  \Phi(E_{jN} [\tilde{\Phi}], s,\cdot)  +  H(x, \nabla \Phi(E_{jN} [\tilde{\Phi}], s,x)  )+\nabla_\rho F ( \rho(E_{jN} [\tilde{\Phi}],s,\cdot))= 0 \\
\rho(E_{jN} [\tilde{\Phi}], t,x) = \rho(x), \quad \Phi(E_{jN} [\tilde{\Phi}], t,x) = E_{jN} [\tilde{\Phi}](x)  \,.
\end{cases}
\eqnx
The numerical method to compute this Cauchy problem will be discussed after we present the main algorithm in Remark \ref{rm10}.

With the above notation, we are ready to present our variant of stochastic gradient descent to optimize $G_N(\cdot)$. We utilize the following coordinate descent algorithm:
\begin{algorithm}
Take an initial guess $[\Phi_0]^1 \in V_{h_0}$, for $i = 0,...,N$, do:
\begin{itemize}
\item
Take an initial guess of the Lipschitz constant $L_i$, set ${count} := 0$ and $v_i := 1/L_i$. 
\item
For $k=1,....,M$, do:
\begin{itemize}
\item[{1}:]
Randomly select $I_k \in 2^{\{1,..., 2^{di} \}}$,
\item[{2}:]
Compute the following unit vector $v_{I_k}$ where
\beqnx
[v_{I_k}]_l = \begin{cases}
1/\sqrt{|I|} & \text{ if } l \in I,\\
0 & \text{ otherwise. }
\end{cases}
\eqnx
\item[{3}:]
Compute
\beqnx
\begin{cases}
[\Phi_i]_I^{k+1}  =  [\Phi_i]_I^{k} - v_i \partial_{v_{I_k}} G_i ( [\Phi_i]_I^{k} ) &  \text{ if } l \in I\\
[\Phi_i]_l^{k+1} =  [\Phi_i]_l^{k+1}  & \text{ otherwise. }
\end{cases}
\eqnx
\item[{4}:]
If $| [\Phi_i]_I^{k+1}  -  [\Phi_i]_I^{k}| > \varepsilon$, then set $\text{count} := 0$. \
If $k =M$, then reset $k:=0$ and set $v_i := v_i/2$,  (i.e. let $L_i := 2L_i$.)
\item[{5}:]
If $| [\Phi_i]_I^{k+1}  -  [\Phi_i]_I^{k}| < \varepsilon$, set $\text{count} := \text{count} +1$.
\item[{5}:]
If $\text{count} = 2^{di}$, define $[\Phi_i]_\text{final} :=   [\Phi_i]_I^{k+1} $ , stop.
\end{itemize}
\item
If $i <N$, set $[\Phi_{i+1}]^1  = E_{i \, (i+1) } \left( [\Phi_i]_\text{final} \right) $ \,.
\end{itemize}
Output $ [\Phi_N]_\text{final}$.
\end{algorithm}

\begin{remark}\label{rm10}
For computation of $\big(\rho(E_{jN} [\tilde{\Phi}], s,x) , \Phi(E_{jN} [\tilde{\Phi}], s,x) \big)$ with numerical PDE techniques, we notice that the primal-dual system in \eqref{Smart}, i.e.  the conservation law and the HJD, may not provide a stable PDE algorithm.  One way to address this pathology is to modify the numerical Hamiltonian that we have implicitly chosen when we derive our algorithm.
We notice indeed that the system \eqref{Smart} is a symplectic scheme that conserves the following numerical Hamiltonian (writing $\rho^{n} := \{ \rho^{n}_i\}$ and $\Phi^n := \{ \Phi^n_i\}$):
\beqnx
\mathbb{H} (\rho^{n}, \Phi^n) =  \sum_{i+\frac{e_v}{2}\in E}   H  \left(\frac{1}{\Delta x}(\Phi_i^n - \Phi_{i+e_v}^n) \right)   \theta^n_{i+\frac{1}{2}e_v}+  F( \{\rho\}_i^n )  \,.
\eqnx
On the other hand, we notice that the choice of such Hamiltonian is not unique: we can choose another numerical Hamiltonian that corresponds to an upwind (monotone) scheme for the primal system and monotone Hamiltonian for the dual system as follows (see also \cite{MFG1, MFG2, MFG}):
\beqnx
\mathbb{H} (\rho^{n}, \Phi^n) := \sum_i  \sum_{v=1}^d H  \left( \left[ \frac{1}{\Delta x}(\Phi_i^n - \Phi_{i+e_v}^n) \right]^+ \right)    \rho^n_i+F( \{\rho\}_i^n ), 
\eqnx
where $ \left[ \cdot \right]^+ := \max(0, \cdot )$ and  $ \left[ \cdot \right]^- := \min(0, \cdot )$. With this, the primal dual system will instead be as follows:
\beqnx
\frac{\rho^{n}_i - \rho^{n-1}_i}{\Delta t} +   \sum_{v=1}^d D_p H \left(  \left[ \frac{1}{\Delta x}(\Phi_i^n - \Phi_{i+e_v}^n) \right]^+ \right)    \rho^n_i   + \sum_{v=1}^d D_p H \left(  \left[ \frac{1}{\Delta x}(\Phi_i^n - \Phi_{i+e_v}^n) \right]^-\right)    \rho^n_{i+e_v}     &=& 0  \\
 \frac{ \Phi^{n+1}_i - \Phi^{n}_i}{\Delta t} +  \frac{1}{2} \sum_{v=1}^d  H  \left( \left[ \frac{1}{\Delta x}(\Phi_i^n - \Phi_{i+e_v}^n) \right]^+ \right) +[\nabla_\rho F ( \{\rho\}_i^n ) ]_i  &=& 0. 
\eqnx
To further enhance  stability, we can add, given a regularization parameter $\beta$, a Lax-Friedrichs scheme numerical diffusion term:
\beqnx
   & &\frac{\rho^{n}_i - \rho^{n-1}_i}{\Delta t} +  \Big\{\sum_{v=1}^d D_p H \left(  \left[ \frac{1}{\Delta x}(\Phi_i^n - \Phi_{i+e_v}^n) \right]^+ \right)    \rho^n_i  \\
&&\hspace{1.8cm} +  \sum_{v=1}^d D_p H \left(  \left[ \frac{1}{\Delta x}(\Phi_i^n - \Phi_{i+e_v}^n) \right]^-\right)    \rho^n_{i+e_v} \Big\} +\frac{\beta \Delta x}{2 (\Delta x)^2} \sum_{v=1}^d ( \rho_i^n - \rho_{i+e_v}^n ) = 0  \\
   & &\frac{\Phi^{n+1}_i - \Phi^{n}_i}{\Delta t} +  \frac{1}{2} \sum_{v=1}^d  H  \left( \left[ \frac{1}{\Delta x}(\Phi_i^n - \Phi_{i+e_v}^n) \right]^+ \right) + [\nabla_\rho F ( \{\rho\}_i^n ) ]_i + \frac{\beta \Delta x }{2 (\Delta x)^2} \sum_{v=1}^d ( \Phi_i^n - \Phi_{i+e_v}^n )  = 0 \,.
\eqnx
This adds a magnitude of $\beta \Delta x$ numerical diffusion in the system.
We notice in our numerical examples that stability improves after imposing $v >0 $ and considering an upwind monotone scheme.
\end{remark}
\section{Numerical results} \label{sec5}
In this section, we present numerical results for solving HJD by Algorithm 1. We tested several cases with the different Hamiltonians, including the convex
\begin{equation*}
H_1(x,p) =  \frac{1}{2} ( |p_1|^2 + |p_2|^2 ) \,,
\end{equation*}
 the {non-convex
\begin{equation*}
H_2(x,p) =  \frac{1}{2} ( |p_1|^2 - |p_2|^2 ) \,,
\end{equation*}
and the convex 1-homogeneous Hamiltonian}
\begin{equation*}
H_3(x,p) =  |p_1| + |p_2| .
\end{equation*}
For a given center $x_0$ and radius $R$, we consider
\beqnx
G(\rho) = \inf_{\tilde{\rho} \in \mathcal{P}(X) } \left \{ \iota_{\mathcal{P}(B_R(x_0))} (\tilde{\rho}) + \frac{1}{2 v} \int_X [ \tilde{\rho} - \rho(x)]^2 dx \right \},
\eqnx
where $v$ is a regularization parameter, and we recall that, for a given convex subset $\mathcal{B} \subset \mathcal{P}(X)$, the indicator function $\iota_{\mathcal{B}}(\rho) = 0 $ if $\rho \in \mathcal{B}$ and $\iota_{\mathcal{B}}(\rho) = \infty $ otherwise.
A direct computation shows
\beqnx
\iota_{\mathcal{P}(B_R(x_0))}^*(\Phi) = \sup_{\rho\in\mathcal{P}(B_R(x_0))}~\int_X\rho(x)\Phi(x) dx.
=  \sup_{x \in B_R(x_0)} \Phi(x) \,.
\eqnx
With the correspondence of summation and infimum convolution via Legendre transform, we arrive at
\beqnx
G^*(\Phi) = \sup_{x \in B_R(x_0)} \Phi(x) + \frac{v}{2} \int_X [ \Phi(x)]^2 dx.
\eqnx
In numerical examples, we set $v = 10^{-3}$.
This helps us compute a regularized projection of a given $\rho$ to the set of all the measures supported at an unit ball.
For simplicity, we set $F(\rho) = 0$ in all our examples. 

We utilize {Algorithm 1} for numerical computations. The number of levels $M=3$ is always chosen.  The Lipschitz constant is always chosen as $L_i = L =2$. For numerical approximation of PDE, we choose the upwind numerical Hamiltonian, together with an addition of Lax-Friedrichs numerical diffusion where $\beta=2$ is chosen.  The discretization parameters are chosen as $\Delta x = 0.04 $ and $\Delta t = 0.008$. In all experiments, we consider $X=\mathbb{T}^2$.

\begin{example}  In this example, we consider the Hamiltonian $H_1$ and the input distribution $\rho(x)$ as follows:
%is the same as in Example 3, again see Figure \ref{exp_4}.

\begin{figurehere}
     \begin{center}
     \vskip -0.3truecm
       \scalebox{0.4}{\includegraphics{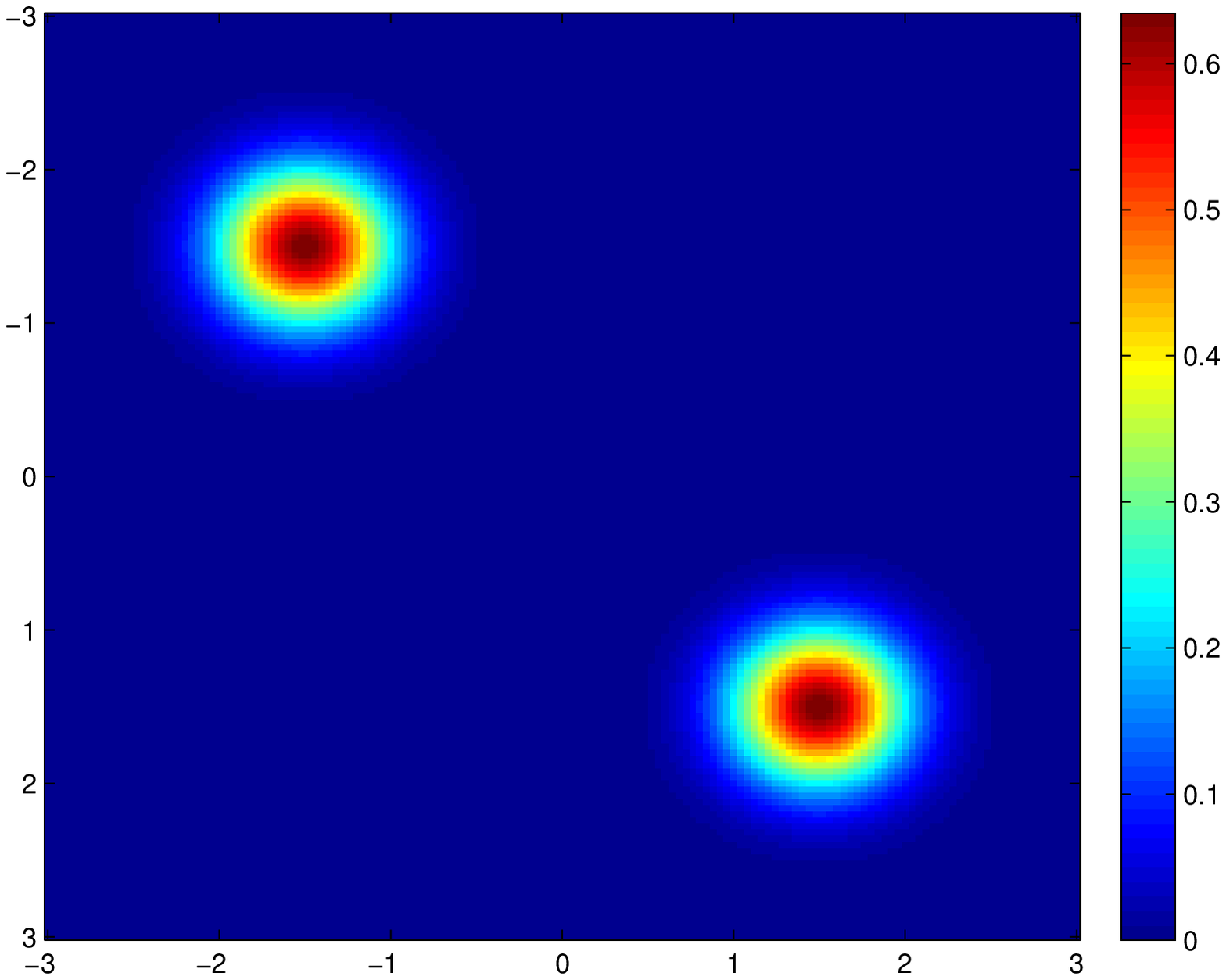}}
\caption{\small The distribution of the input $\rho$.}\label{exp_4}
     \end{center}
 \end{figurehere}

\noindent We choose the center and radius $(x_0 ,R)$ which helps to define $G(\rho)$ as $x_0 = (0,0), R = 1$. Figure \ref{exp_13} gives the optimizer $\tilde{\Phi}$ (left) in \eqref{Smart} and its gradient $\nabla_x \tilde{\Phi}$ (right) computed using {Algorithm 1} when $t = 1$ in the Hamiltonian.  The gradient $\nabla_x \tilde{\Phi}$ generates the final kick of the drift for the masses to be flown accordingly.

\begin{figurehere}
     \begin{center}
     \vskip -0.3truecm
        \scalebox{0.4}{\includegraphics{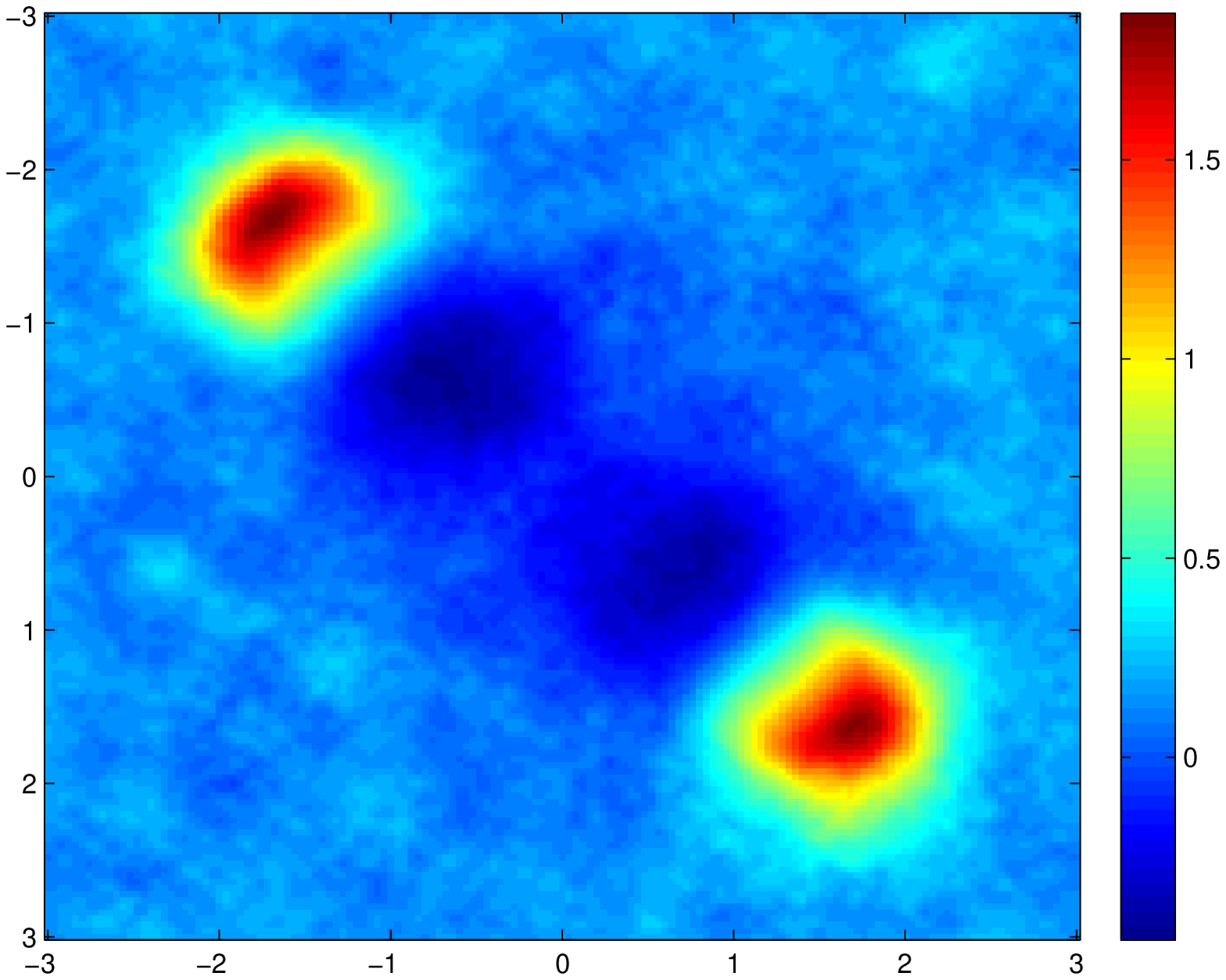}}
\scalebox{0.4}{\includegraphics{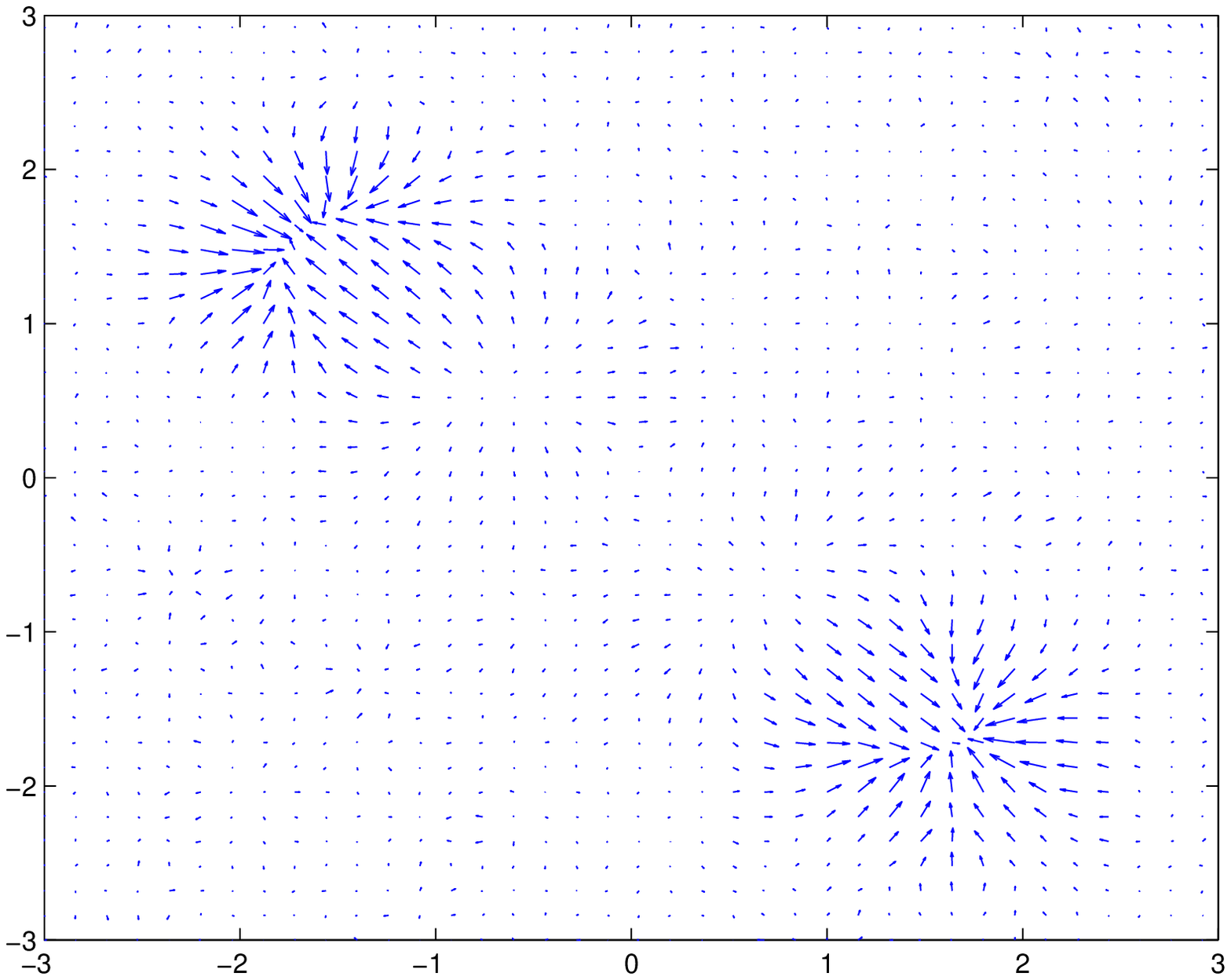}} \\
\caption{\small Left: optimizer $\tilde{\Phi}$ in $U(t,\rho)$ in \eqref{lala_hopf}, right: vector field $\nabla_x \tilde{\Phi}$.}\label{exp_13}
     \end{center}
 \end{figurehere}
In Figure \ref{exp_14}, we plot the distributions $ \rho(t,x) $ for different $t = 0,0.2,0.4,0.6,0.8,1.0$. It describes the transportation of the masses according to the flow generated by the gradient of $\Phi(t,x)$ at different times $t$.

\begin{figurehere}
     \begin{center}
     \vskip -0.3truecm
       \scalebox{0.35}{\includegraphics{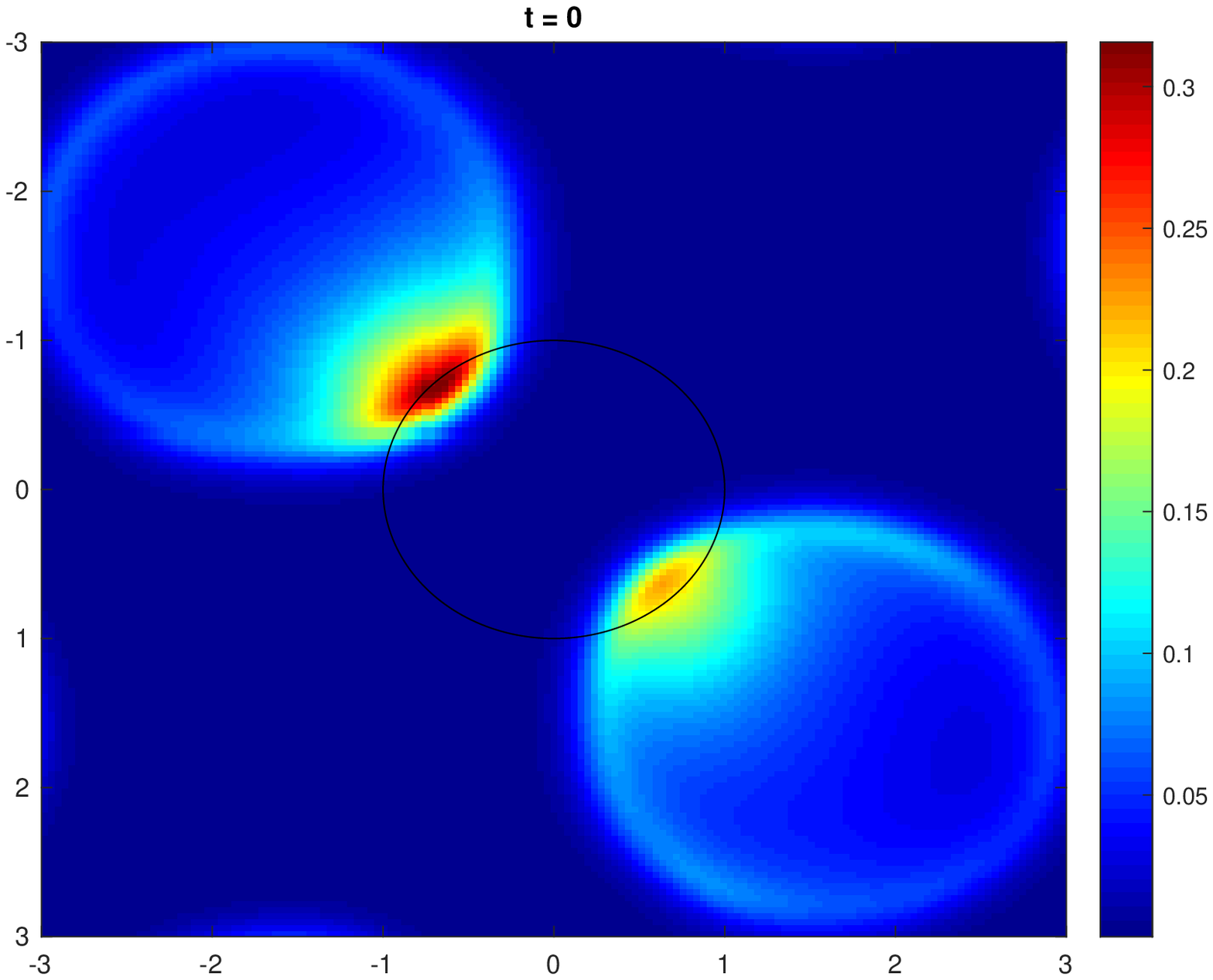}}
    \scalebox{0.35}{\includegraphics{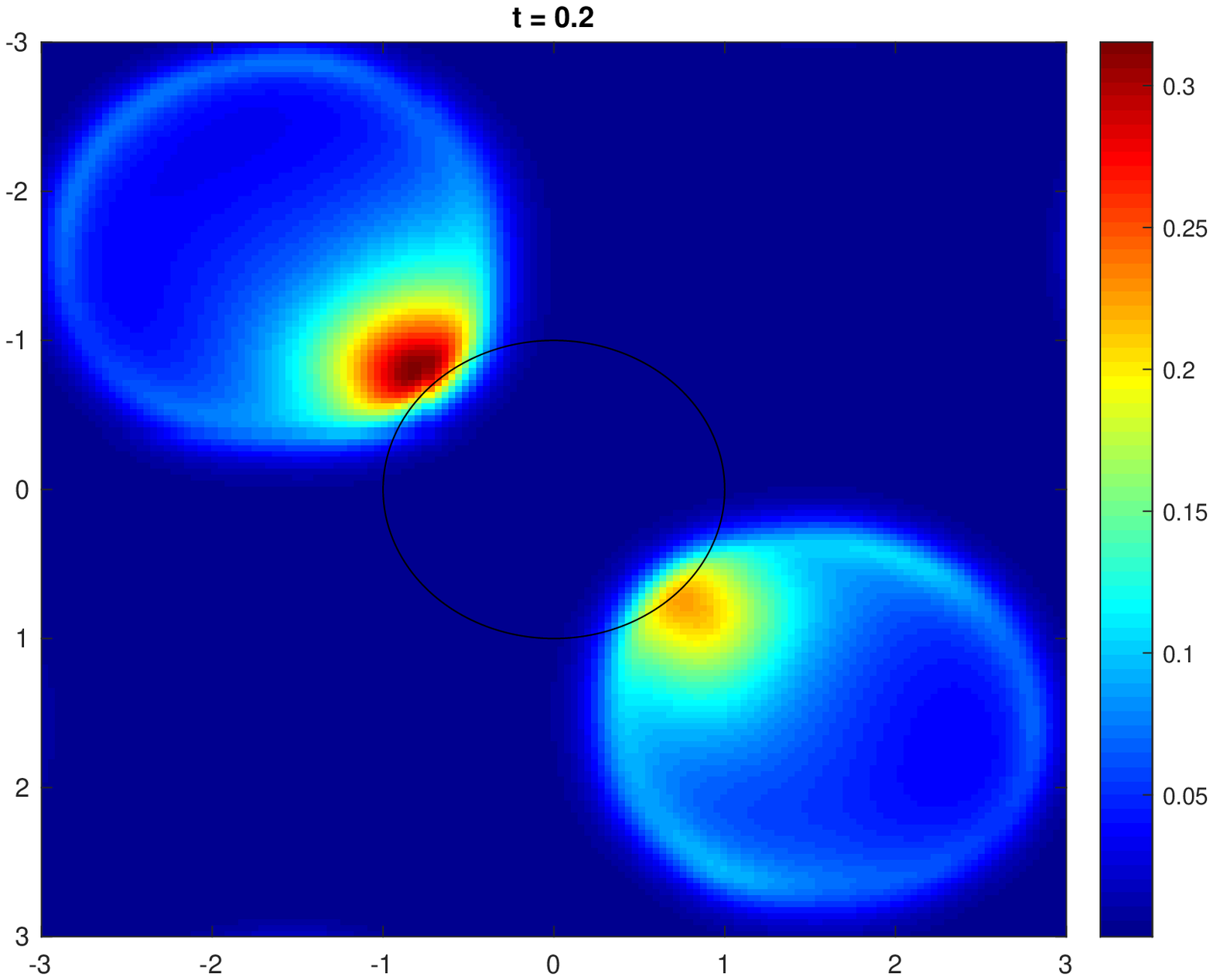}} \\
   \vskip -0.3truecm
       \scalebox{0.35}{\includegraphics{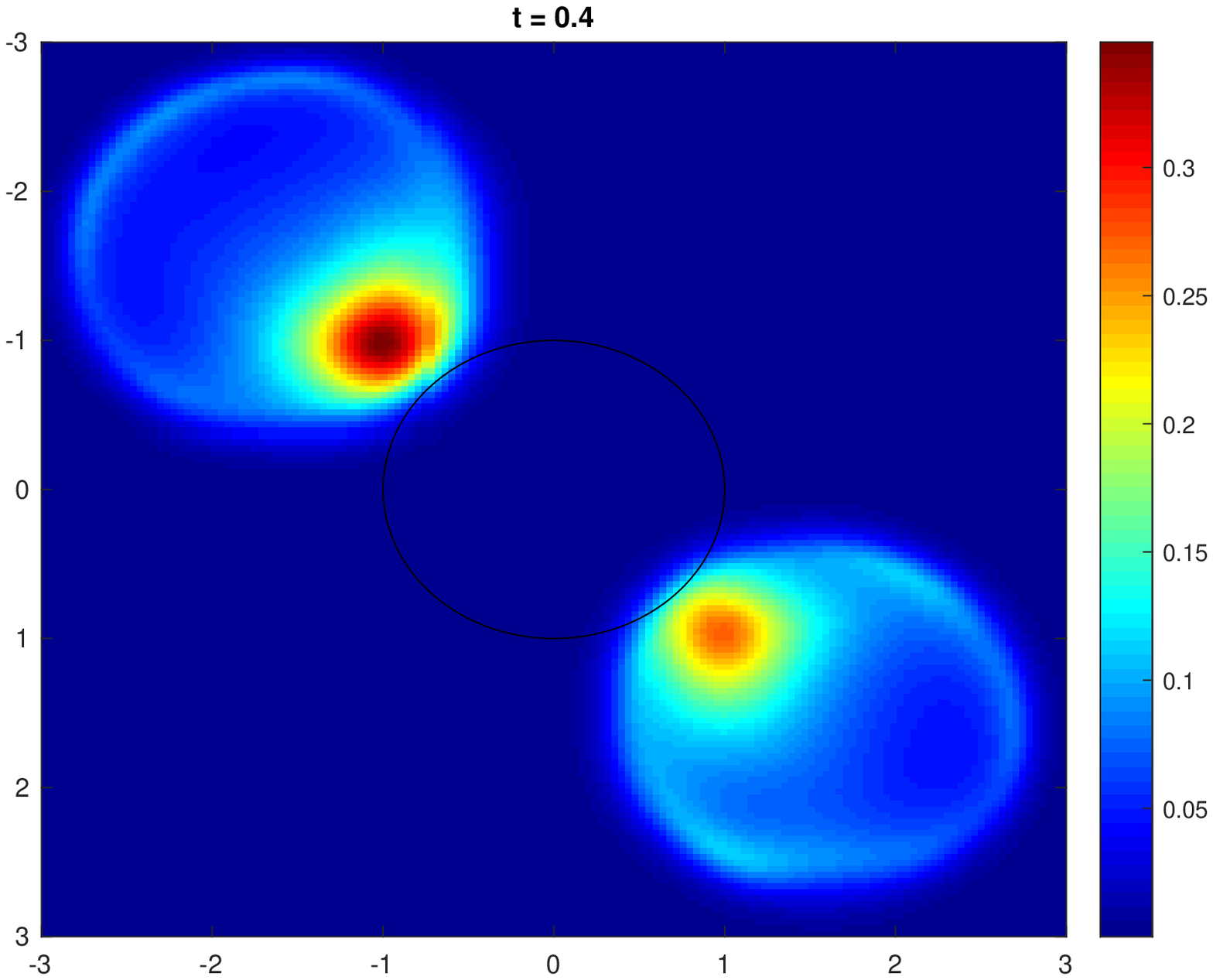}}
    \scalebox{0.35}{\includegraphics{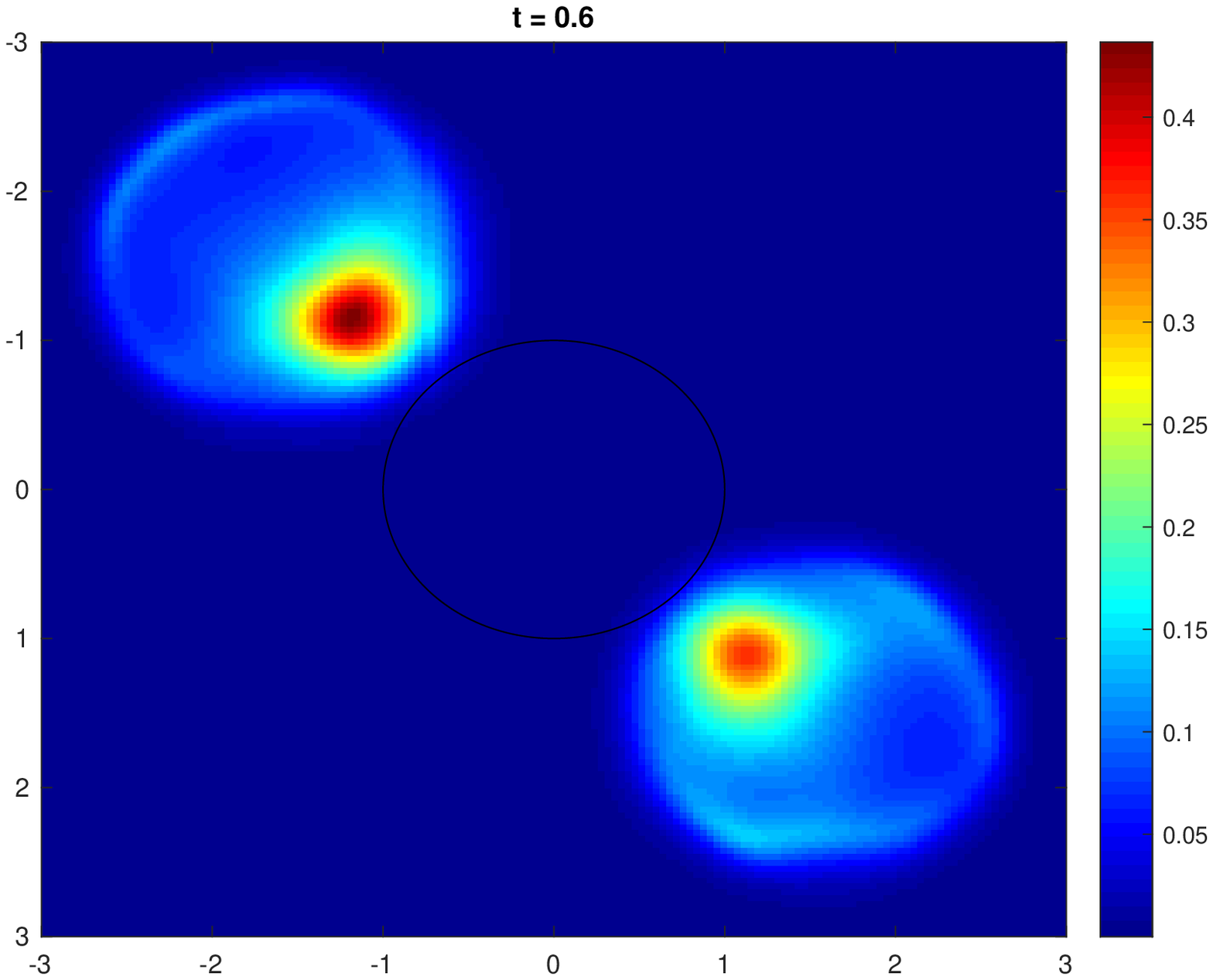}} \\
   \vskip -0.3truecm
       \scalebox{0.35}{\includegraphics{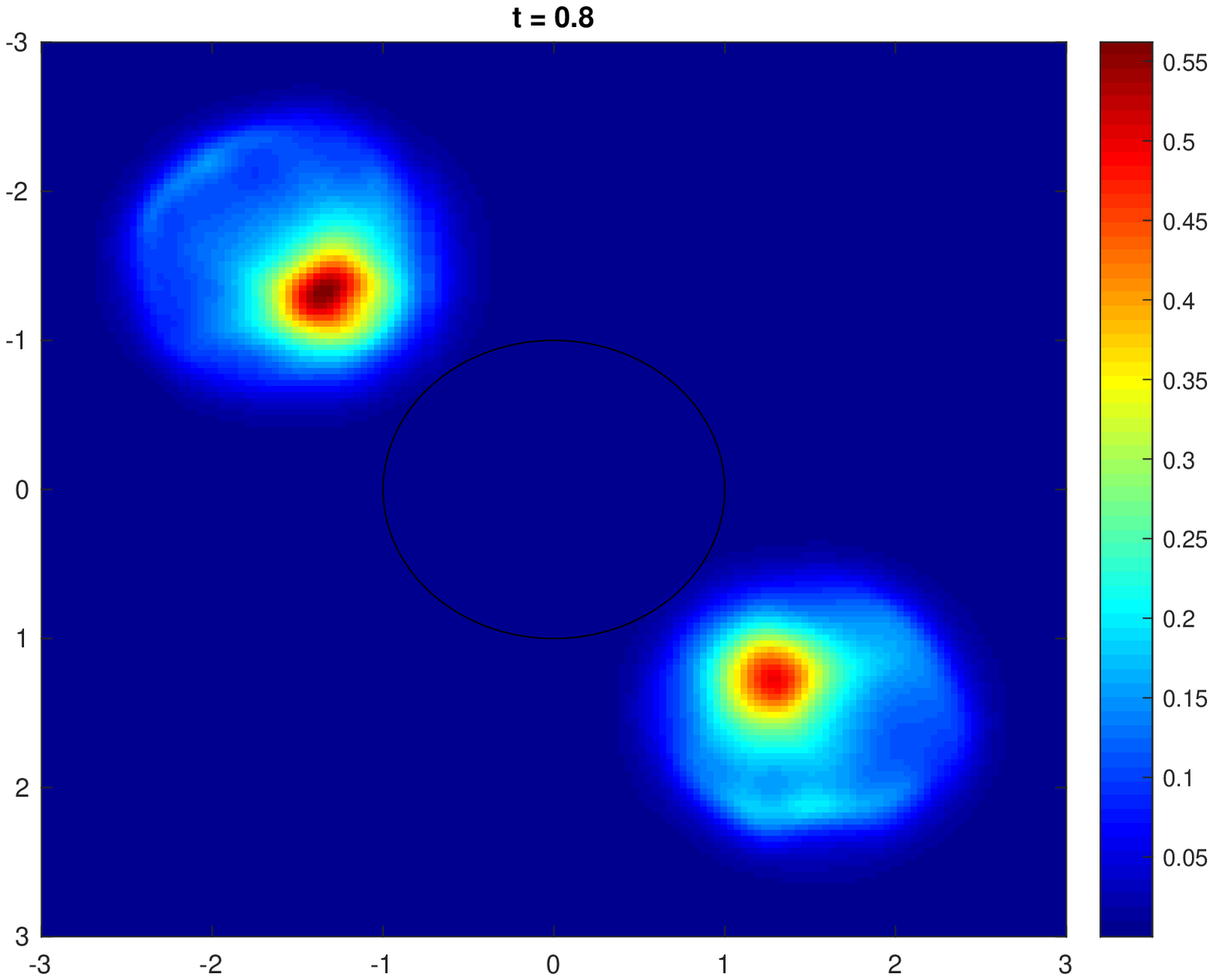}} 
         \scalebox{0.35}{\includegraphics{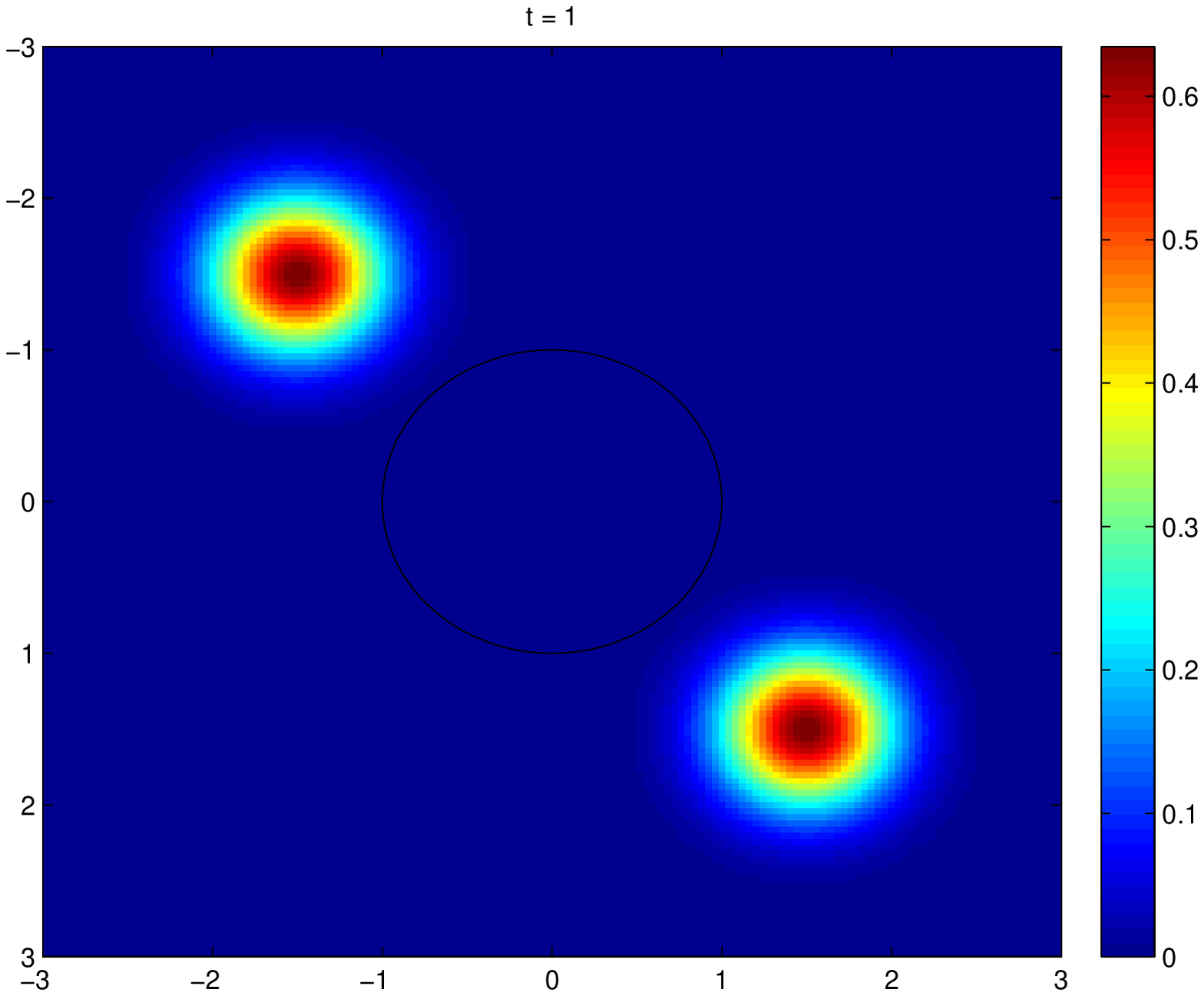}} \\
\caption{\small  The distribution $\rho(t,x) $ generating the convection on the mass for different $t = 0,0.2,0.4,0.6,0.8,1.0$. The black circle is the boundary of $B_1(0)$.}\label{exp_14}
     \end{center}
 \end{figurehere}

From the figures, we can see that our proposed method identifies simultaneously the two non-unique points closets from two mass lumps at antipodal positions to the ball in the center.  In particular, the projected measure is the average of the two Dirac masses at the boundary of the circle, where each of them is the closest point of the mass lumps.   The algorithm uses reversed time, and the reconstruction moves from the points on the balls to the two respective masses.
\end{example}

\begin{example} In this example, we consider the Hamiltonian $H_1$ above and the input distribution $\rho(x)$ as follows:
%is the same as in Example 3, again see Figure \ref{exp_5}.

\begin{figurehere}
     \begin{center}
     \vskip -0.3truecm
       \scalebox{0.4}{\includegraphics{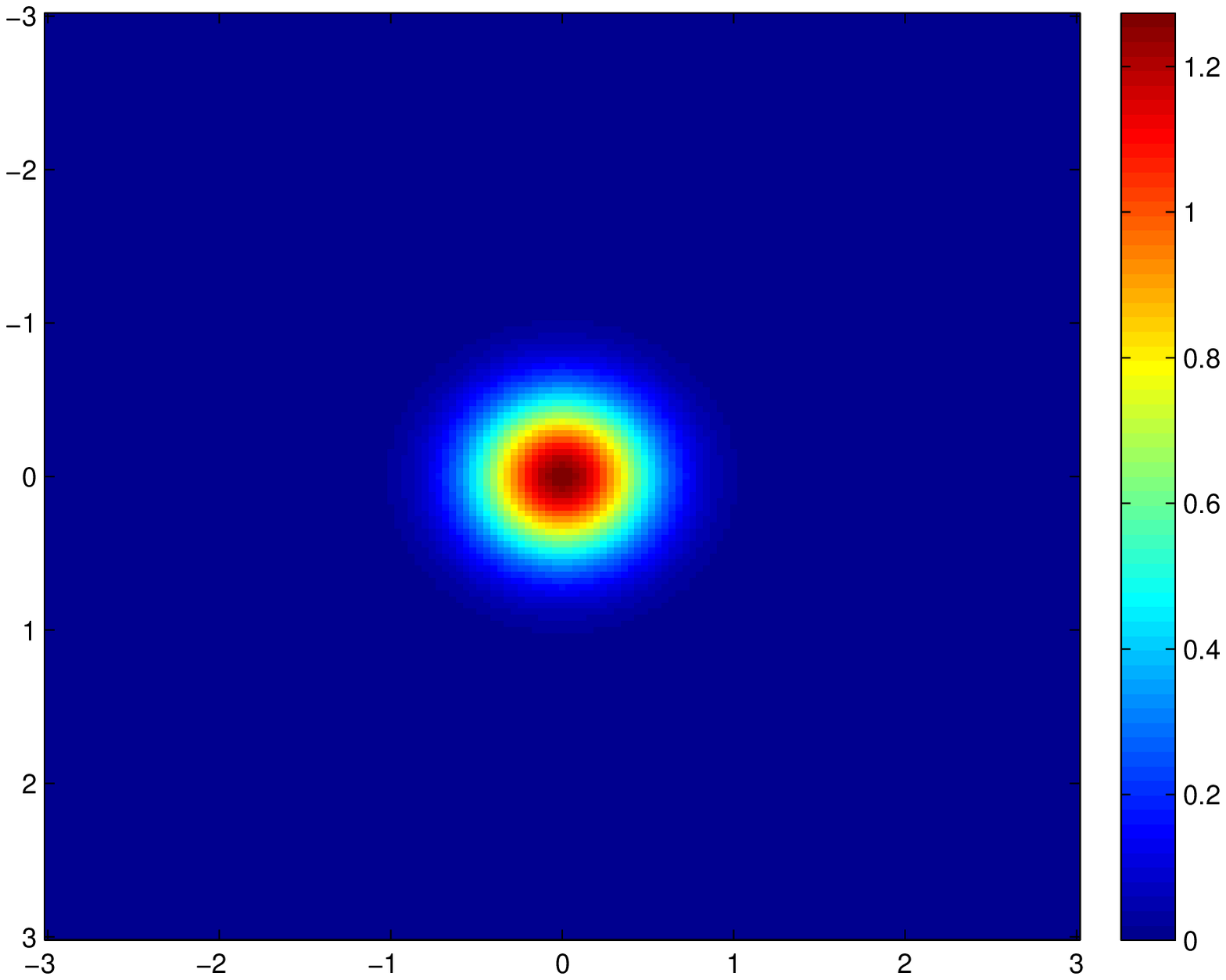}}
\caption{\small The distribution of the input $\rho$.}\label{exp_5}
     \end{center}
 \end{figurehere}

\noindent We choose the center and radius $(x_0 ,R)$, which helps to define $G(\rho)$ as $x_0 = (3,0), R = 2$. Since this is a torus, the mass sees a ``non-convex" object from both sides from afar.
Figure \ref{exp_15} gives the optimizer $\tilde{\Phi}$ (left) in \eqref{lala_hopf} and its gradient $\nabla_x \tilde{\Phi}$ (right) computed using {Algorithm 1} when $t = 1$ in the Hamiltonian.
We fix $L_i = 2$ in {Algorithm 1}.

\begin{figurehere}
     \begin{center}
     \vskip -0.3truecm
        \scalebox{0.4}{\includegraphics{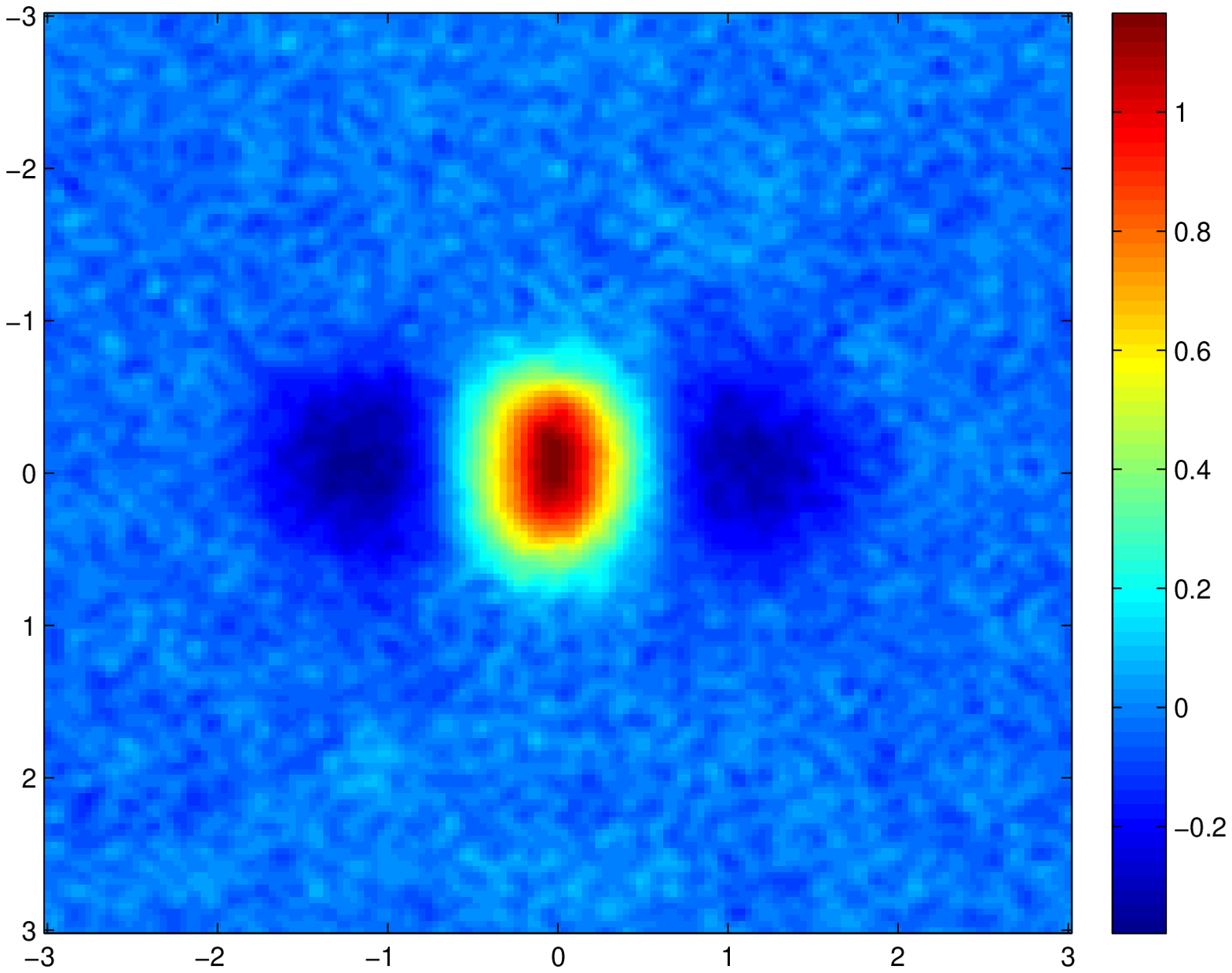}}
\scalebox{0.4}{\includegraphics{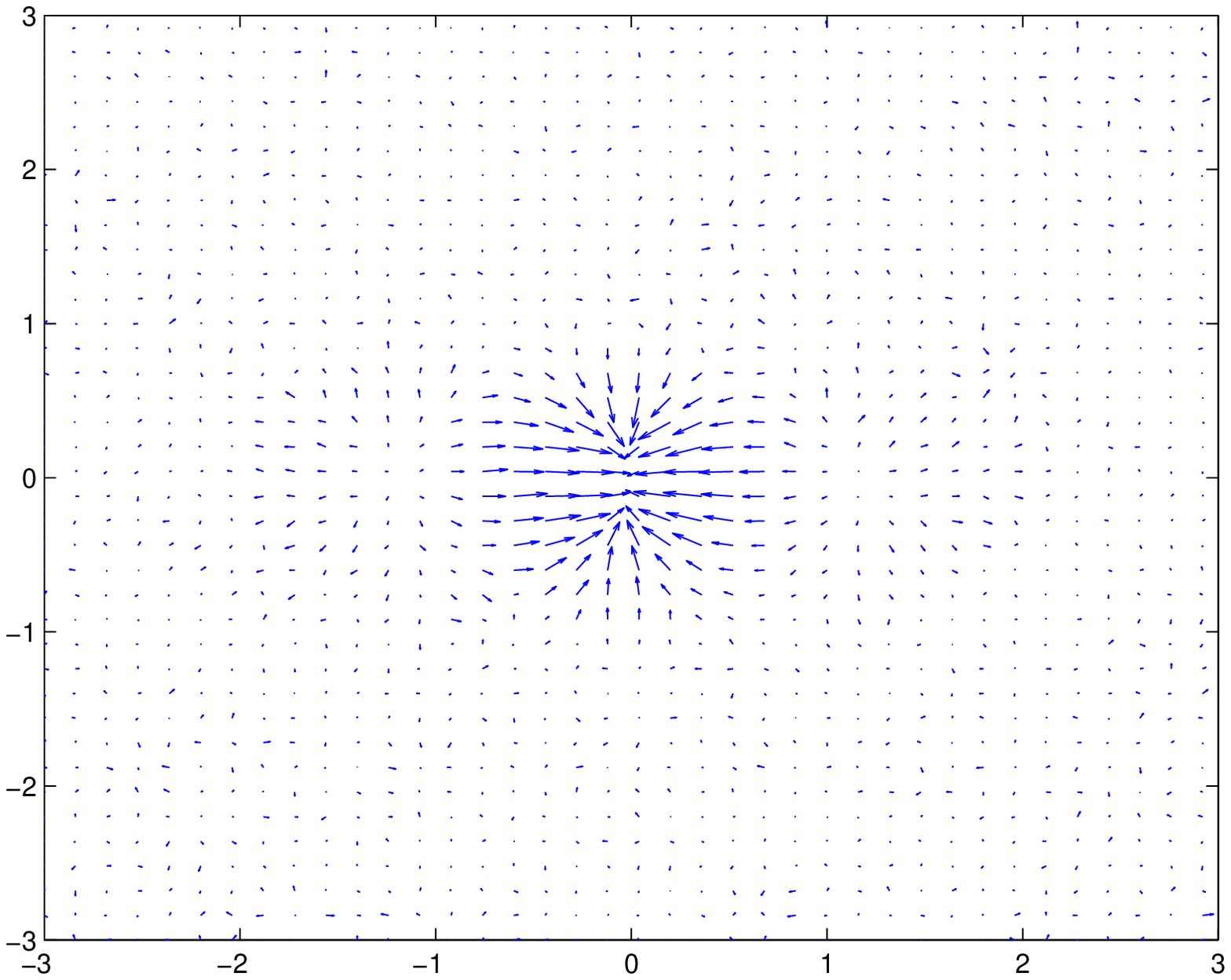}} \\
\caption{\small Left: optimizer $\tilde{\Phi}$ in $U(t,\rho)$ in \eqref{lala_hopf}, right: vector field $\nabla_x \tilde{\Phi}$.}\label{exp_15}
     \end{center}
 \end{figurehere}

In Figure \ref{exp_16}, we plot the distributions $ \rho(t,x) $ for different $t = 0,0.2,0.4,0.6,0.8,1.0$.

\begin{figurehere}
     \begin{center}
     \vskip -0.3truecm
       \scalebox{0.35}{\includegraphics{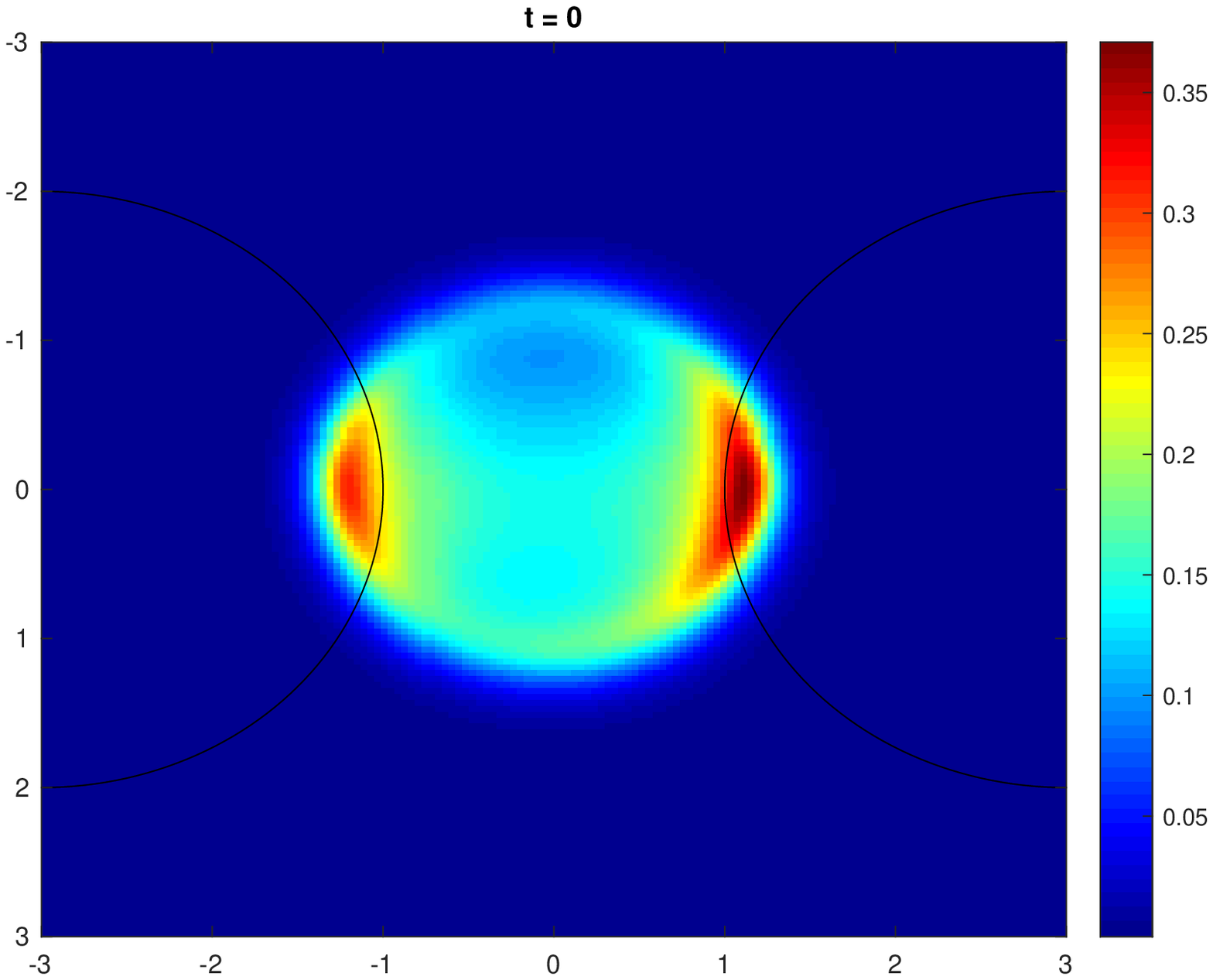}}
    \scalebox{0.35}{\includegraphics{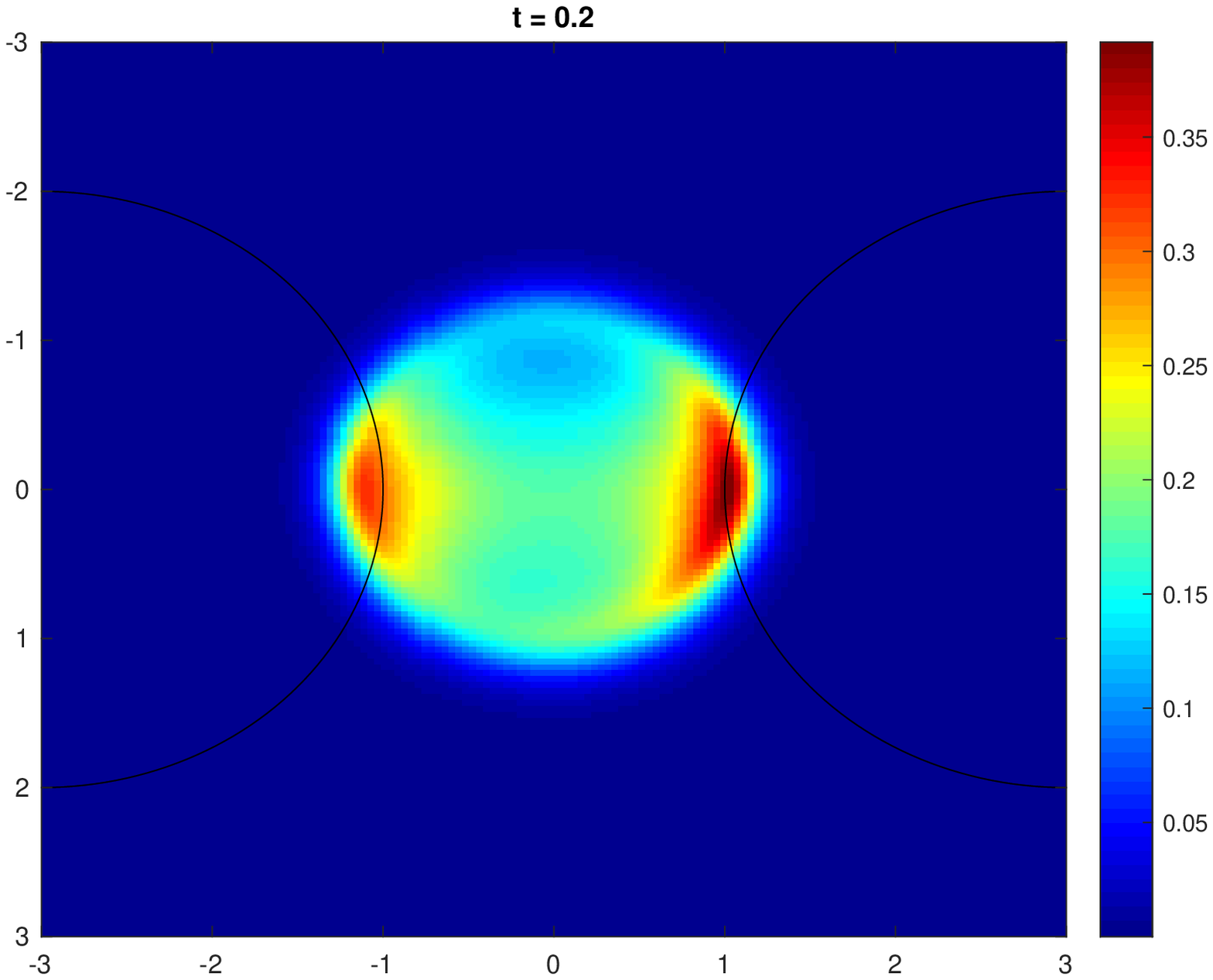}} \\
   \vskip -0.3truecm
       \scalebox{0.35}{\includegraphics{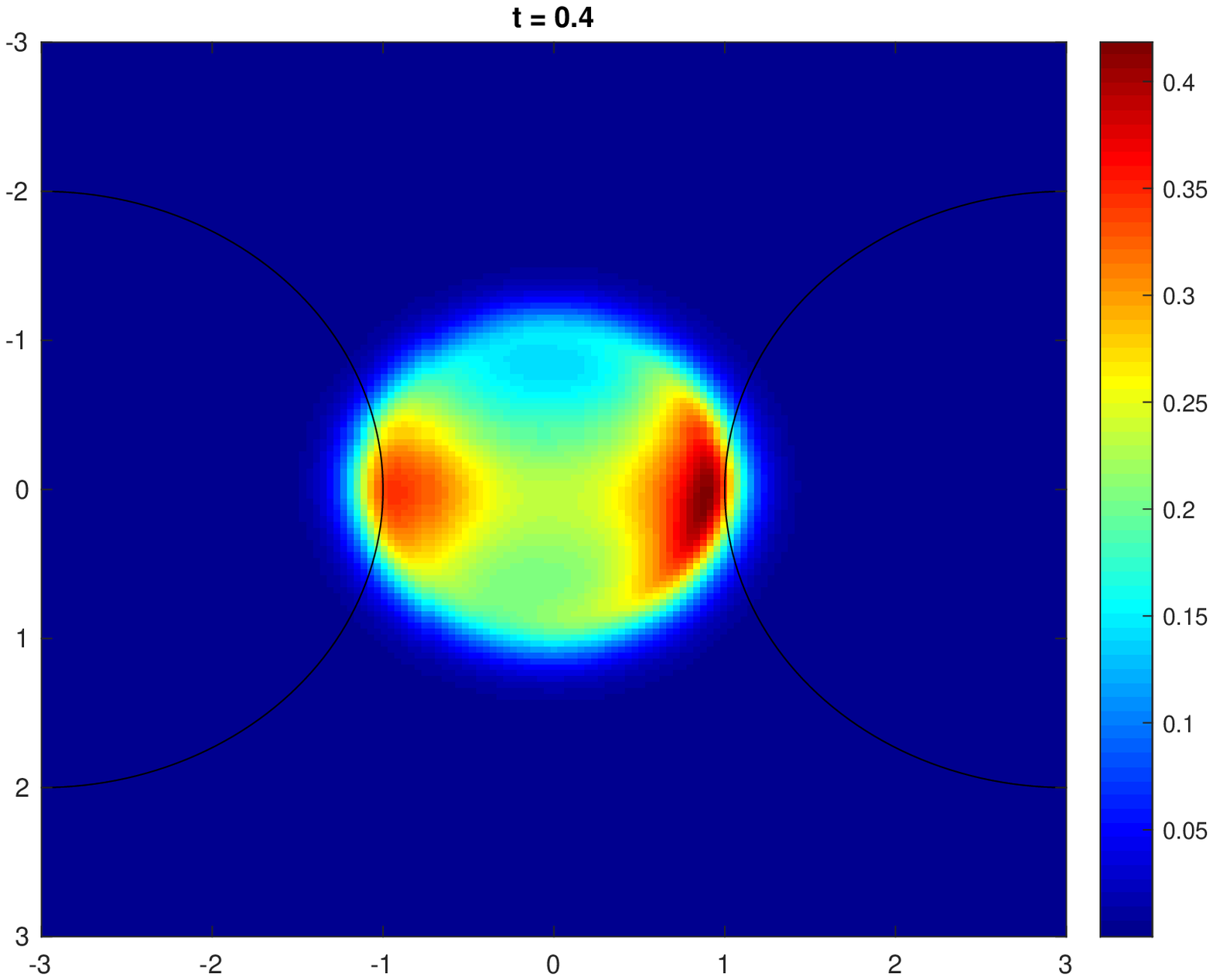}}
    \scalebox{0.35}{\includegraphics{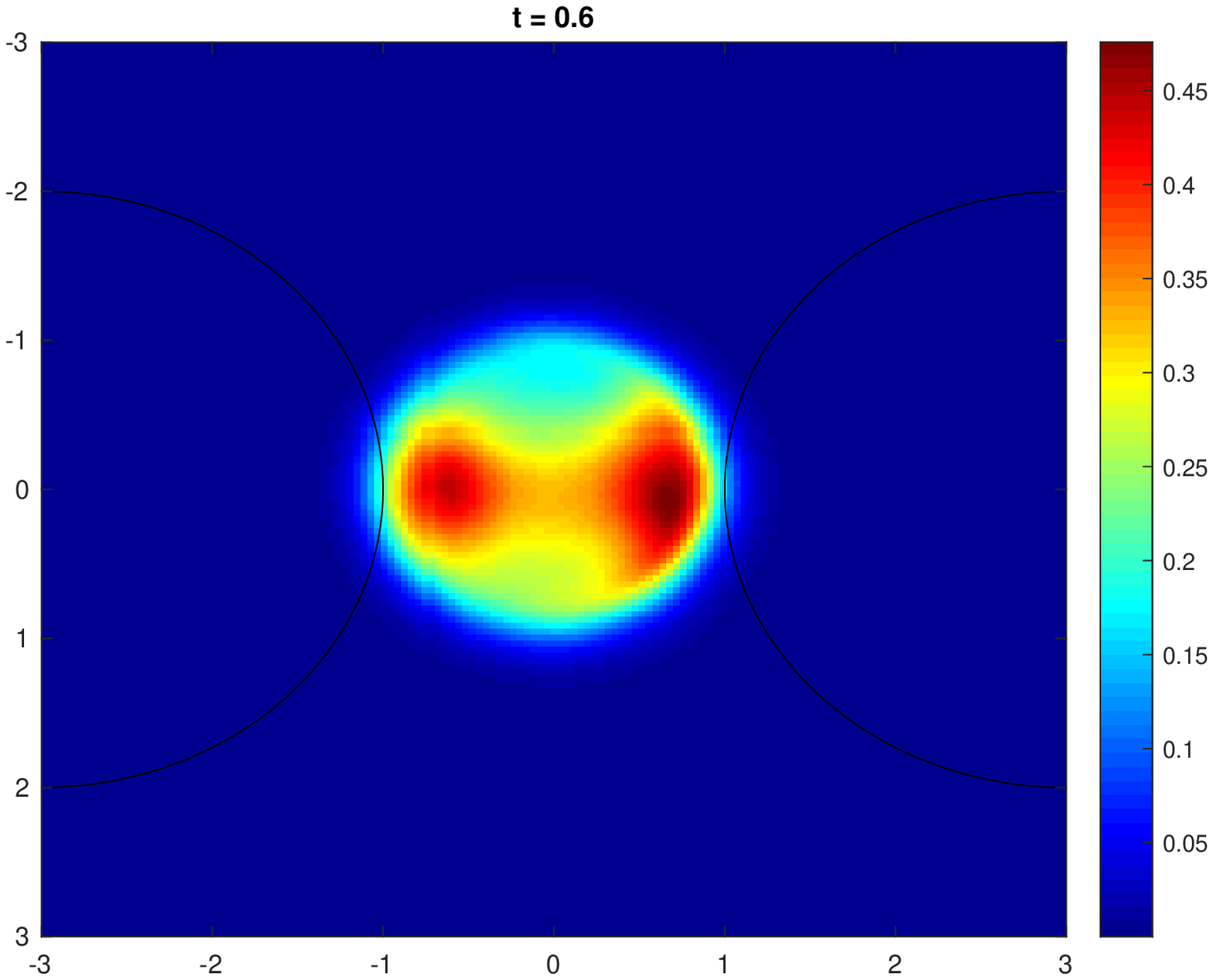}} \\
   \vskip -0.3truecm
       \scalebox{0.35}{\includegraphics{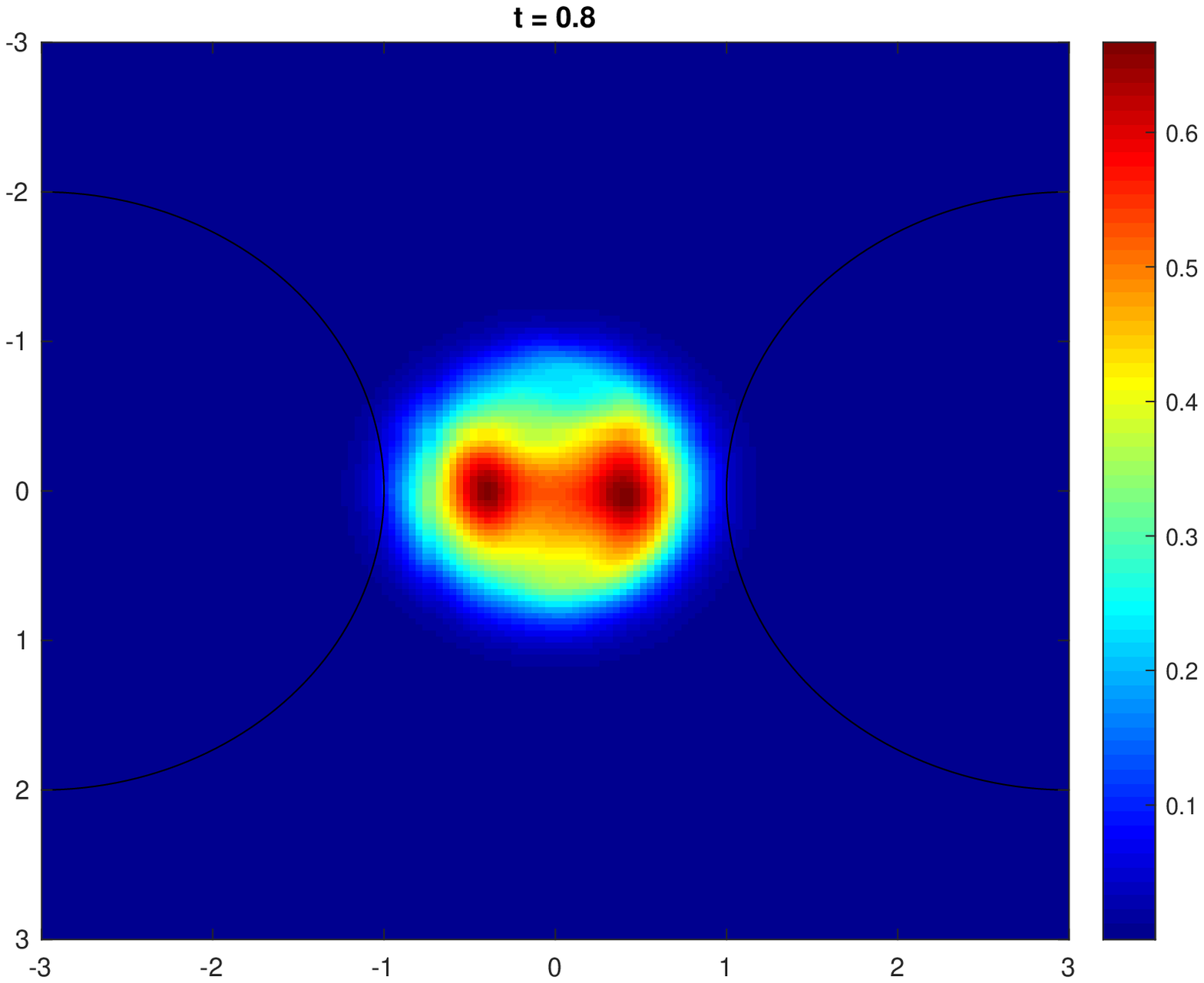}} 
         \scalebox{0.35}{\includegraphics{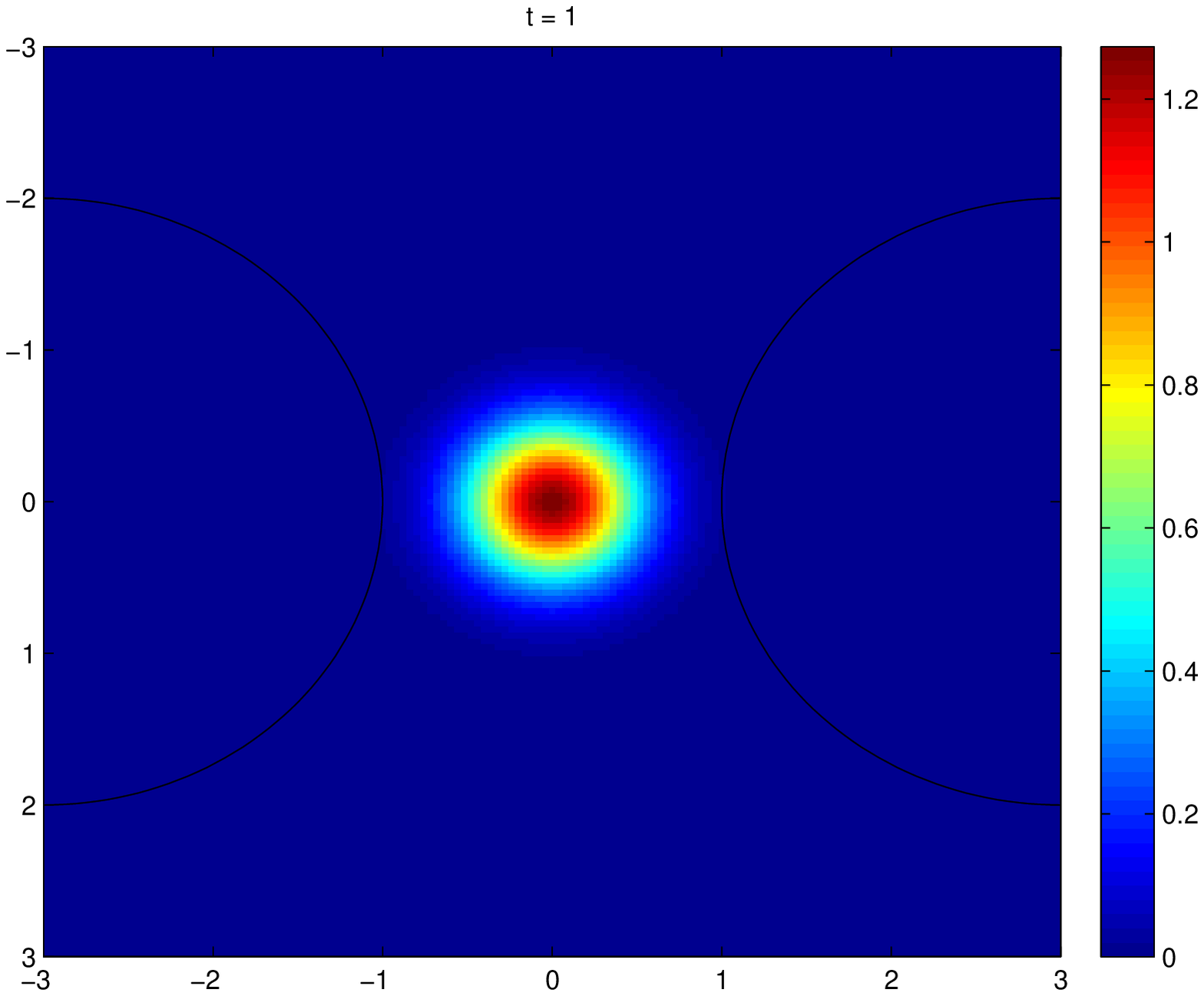}} \\
\caption{\small  The distribution $\rho(t,x) $ generating the convection on the mass for different $t = 0,0.2,0.4,0.6,0.8,1.0$. The black circle is the boundary of $B_1(0)$.}\label{exp_16}
     \end{center}
 \end{figurehere}

In this example, our method accurately finds the projections of a mass to the two non-unique closest points on the ``non-convex" body (which is in fact a ball "split" in two).  In particular, the flow of the mass splits into two opposite directions; each  brings half of the densities to the boundary of the target body.
\end{example}

\begin{example}
 In this example, we consider the non-convex Hamiltonian $H_3$ above and the input distribution $\rho(x)$ in Figure \ref{exp_0}.
\begin{figurehere}
     \begin{center}
     \vskip -0.3truecm
       \scalebox{0.4}{\includegraphics{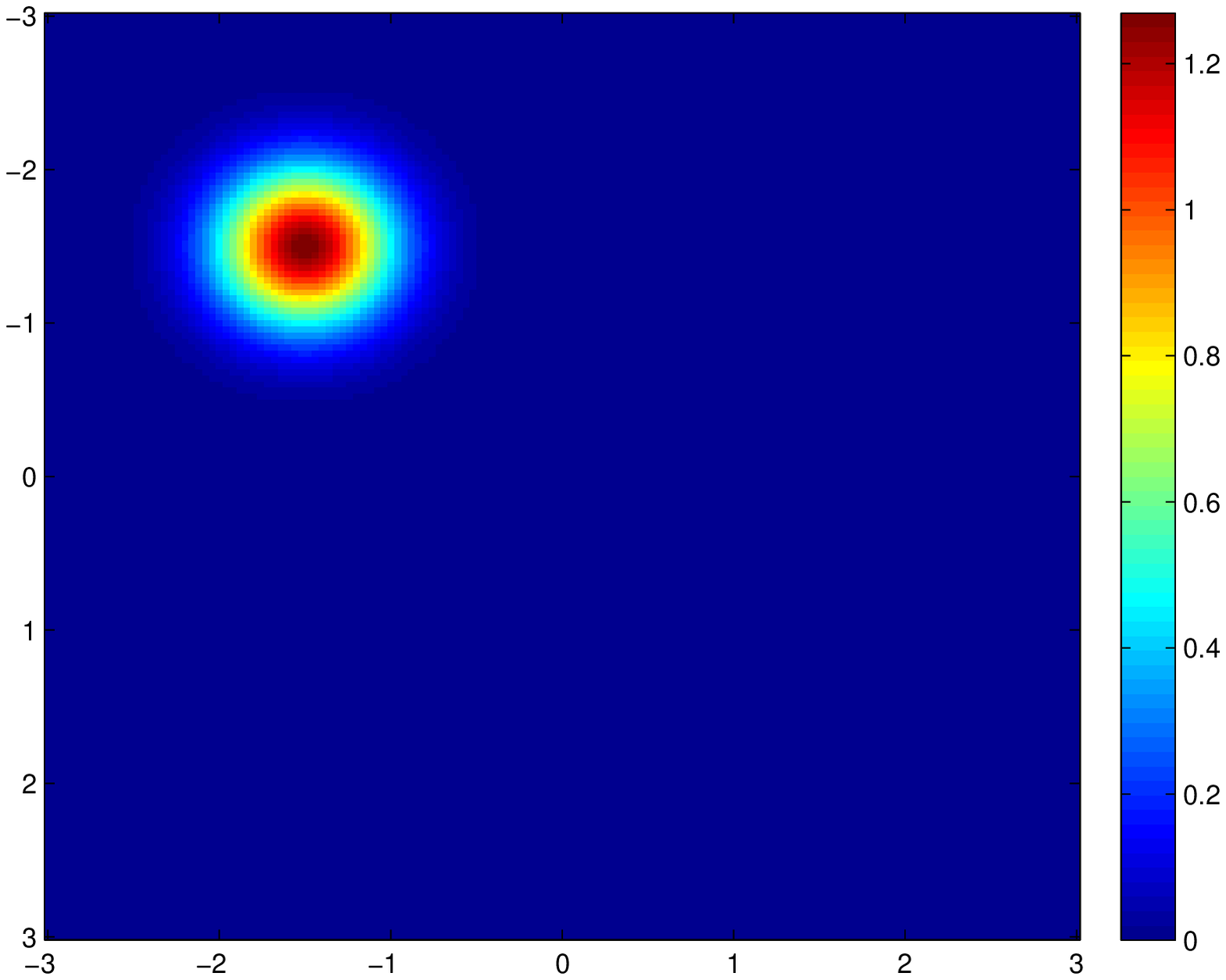}}
\caption{\small The distribution of the input $\rho$. }\label{exp_0}
     \end{center}
 \end{figurehere}

In order to compute the absolute value in the conservation law in a stable way, we replace the function with a soft absolute value as follows
\beqnx
\text{soft-abs-value}(x) := \frac{2}{\pi} \arctan (c x) \,,
\eqnx
where we choose $c=20$.  With this regularization, the Hamiltonian under consideration is actually
\beqnx
H_{3,c} (p):= H_c(p_1) + H_c(p_2)
\eqnx
where 
\beqnx
H_c(s) =   \frac{2}{\pi} s \arctan(s) - \frac{1}{c \pi} \log(1+ c^2 s^2) \,,
\eqnx
and thus the Lagrangian cost functional is
\beqnx
L_{3,c} (v):= L_c(v_1) + L_c(v_2),
\eqnx
where 
\beqnx
L_c(t) =    
\begin{cases}
\frac{1}{c \pi} \log\left( \sec^2\left(  \frac{\pi t}{2} \right)\right)  & \text{ if } |t| <1\,, \\
\infty & \text{ if } |t| \geq 1\,. \\
\end{cases}
\eqnx

\noindent  As in {Example 1}, we choose the center and radius $(x_0 ,R)$ which helps to define $G(\rho)$ as $x_0 = (0,0), R = 1$.  Figure \ref{exp_17} gives the optimizer $\tilde{\Phi}$ (left) in  \eqref{Smart} and its gradient $\nabla_x \tilde{\Phi}$ (right) computed using {Algorithm 1} when $t = 1$ in the Hamiltonian.

\begin{figurehere}
     \begin{center}
     \vskip -0.3truecm
        \scalebox{0.4}{\includegraphics{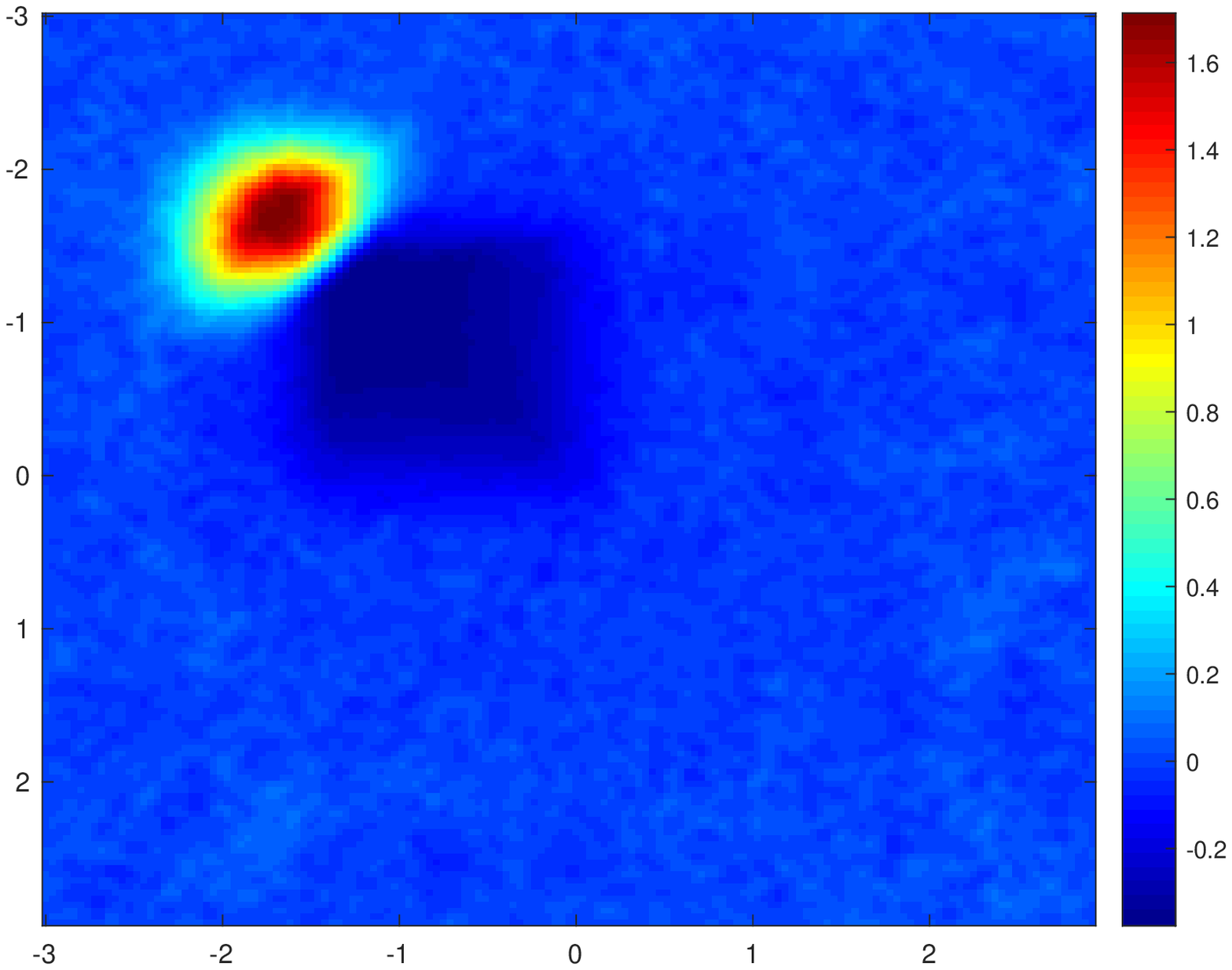}}
\scalebox{0.4}{\includegraphics{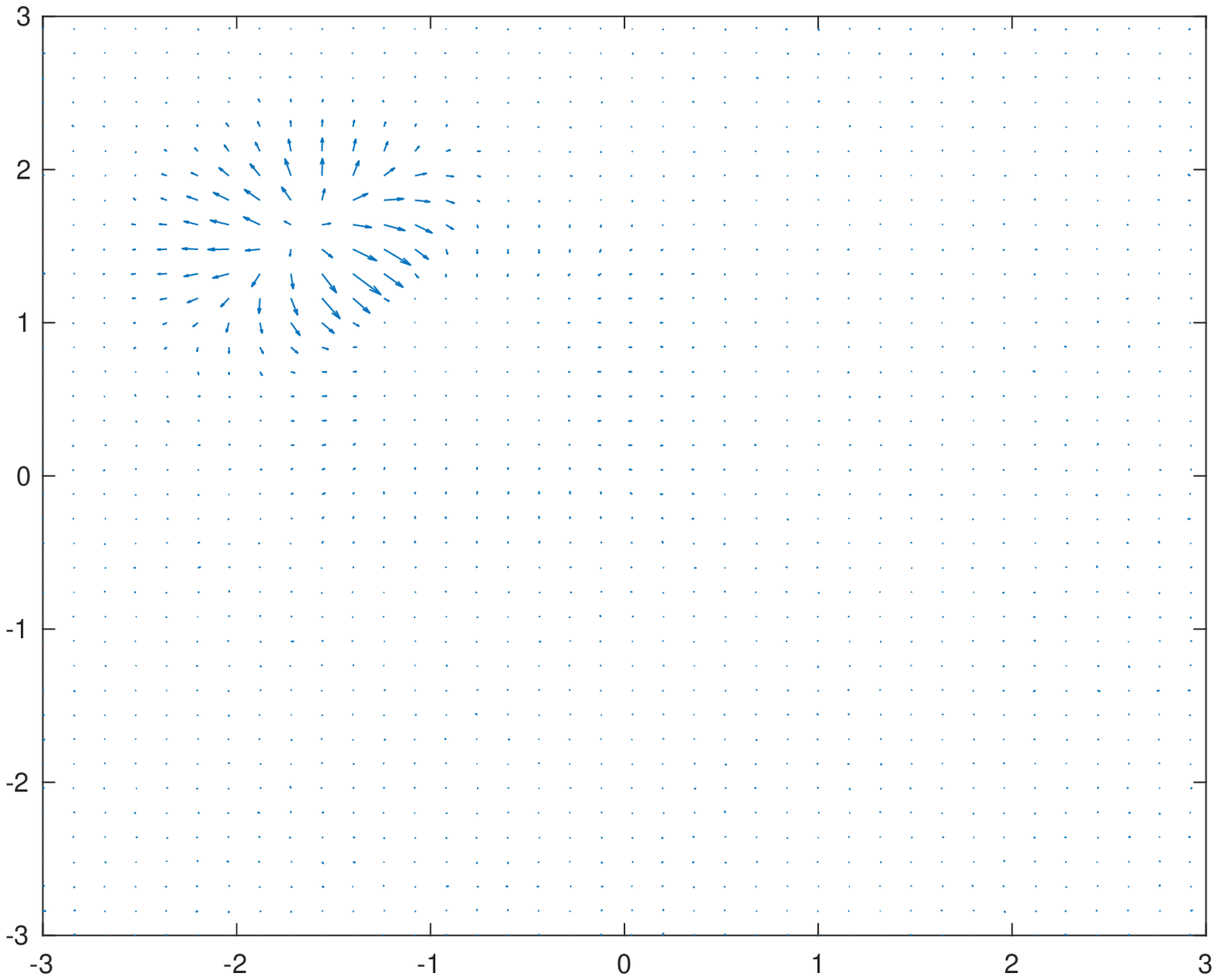}} \\
\caption{\small Left: optimizer $\tilde{\Phi}$ in $U(t,\rho)$ in \eqref{lala_hopf}, right: vector field $\nabla_x \tilde{\Phi}$.}\label{exp_17}
     \end{center}
 \end{figurehere}

\noindent Figure \ref{exp_18} plots the distributions $ \rho(t,x) $ for different $t = 0,0.2,0.4,0.6,0.8,1.0$.

\begin{figurehere}
     \begin{center}
     \vskip -0.3truecm
       \scalebox{0.35}{\includegraphics{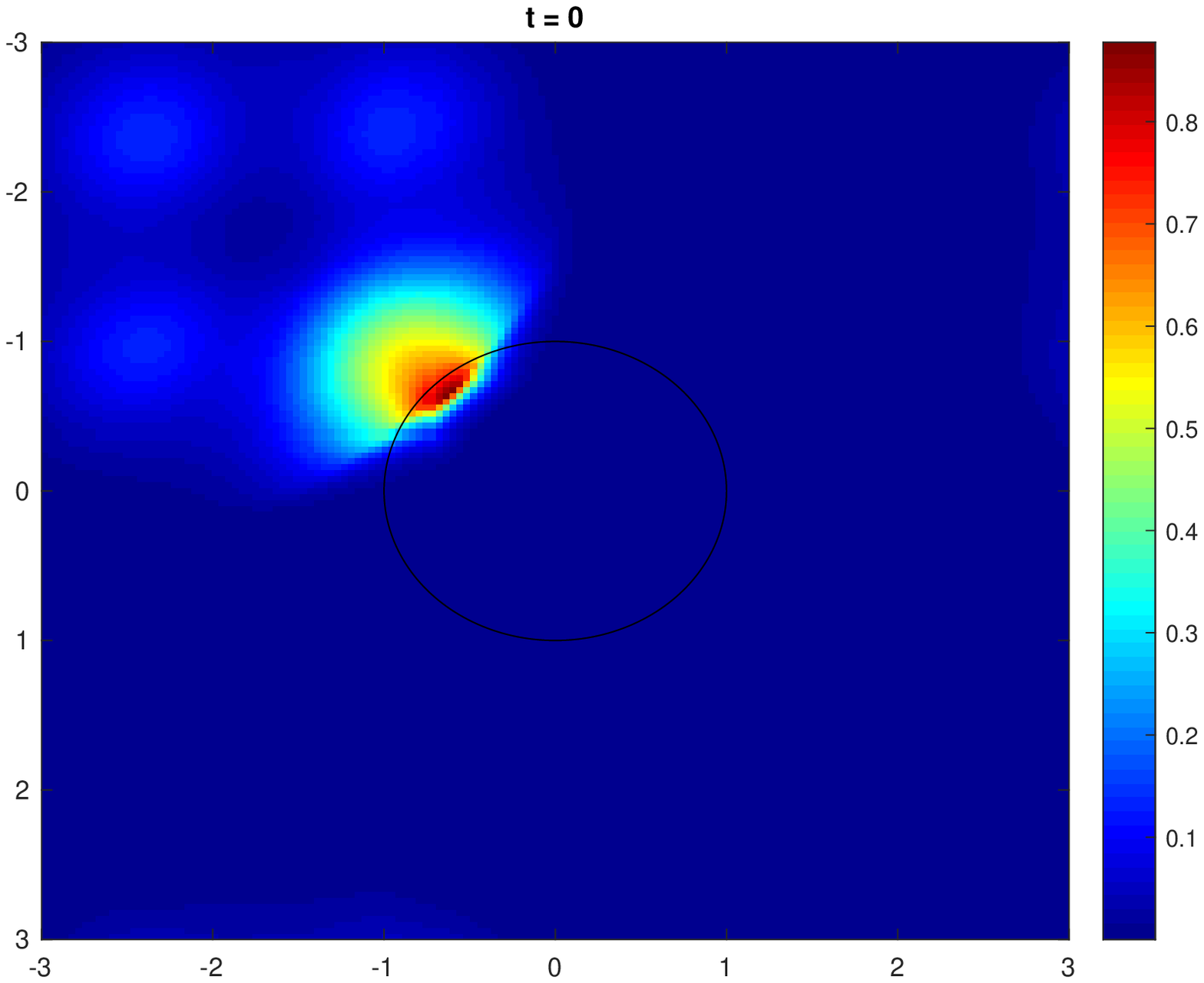}}
    \scalebox{0.35}{\includegraphics{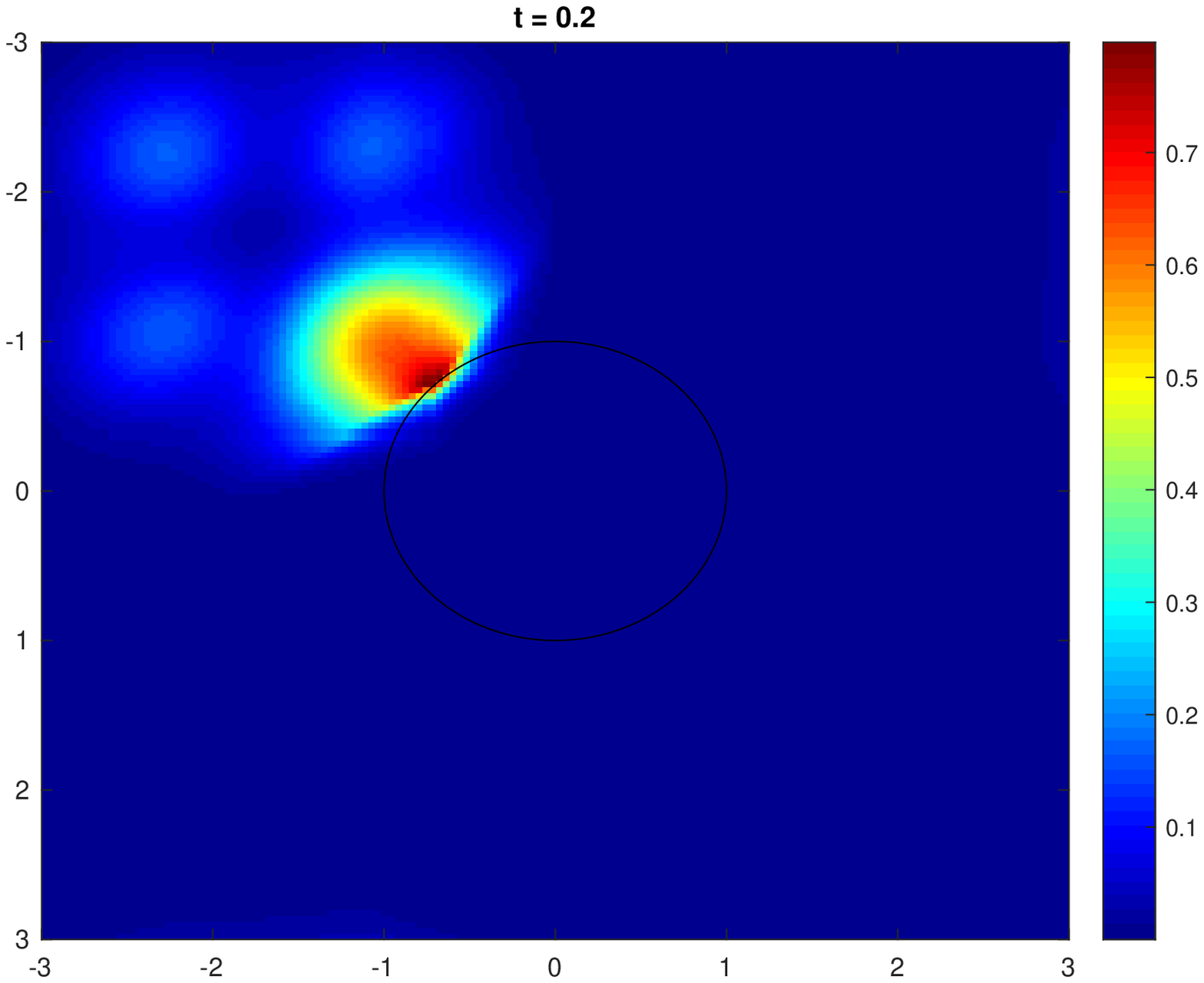}} \\
   \vskip -0.3truecm
       \scalebox{0.35}{\includegraphics{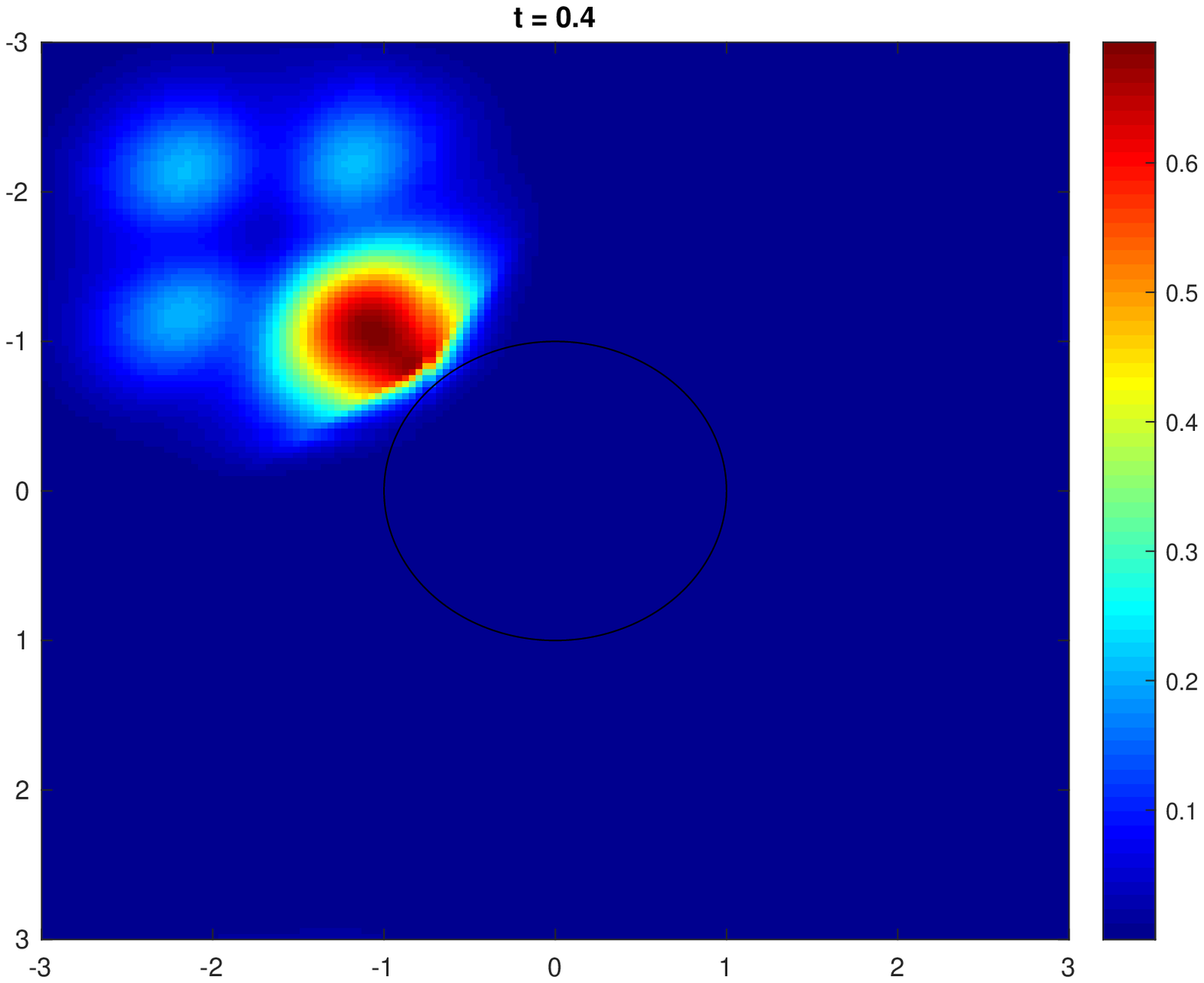}}
    \scalebox{0.35}{\includegraphics{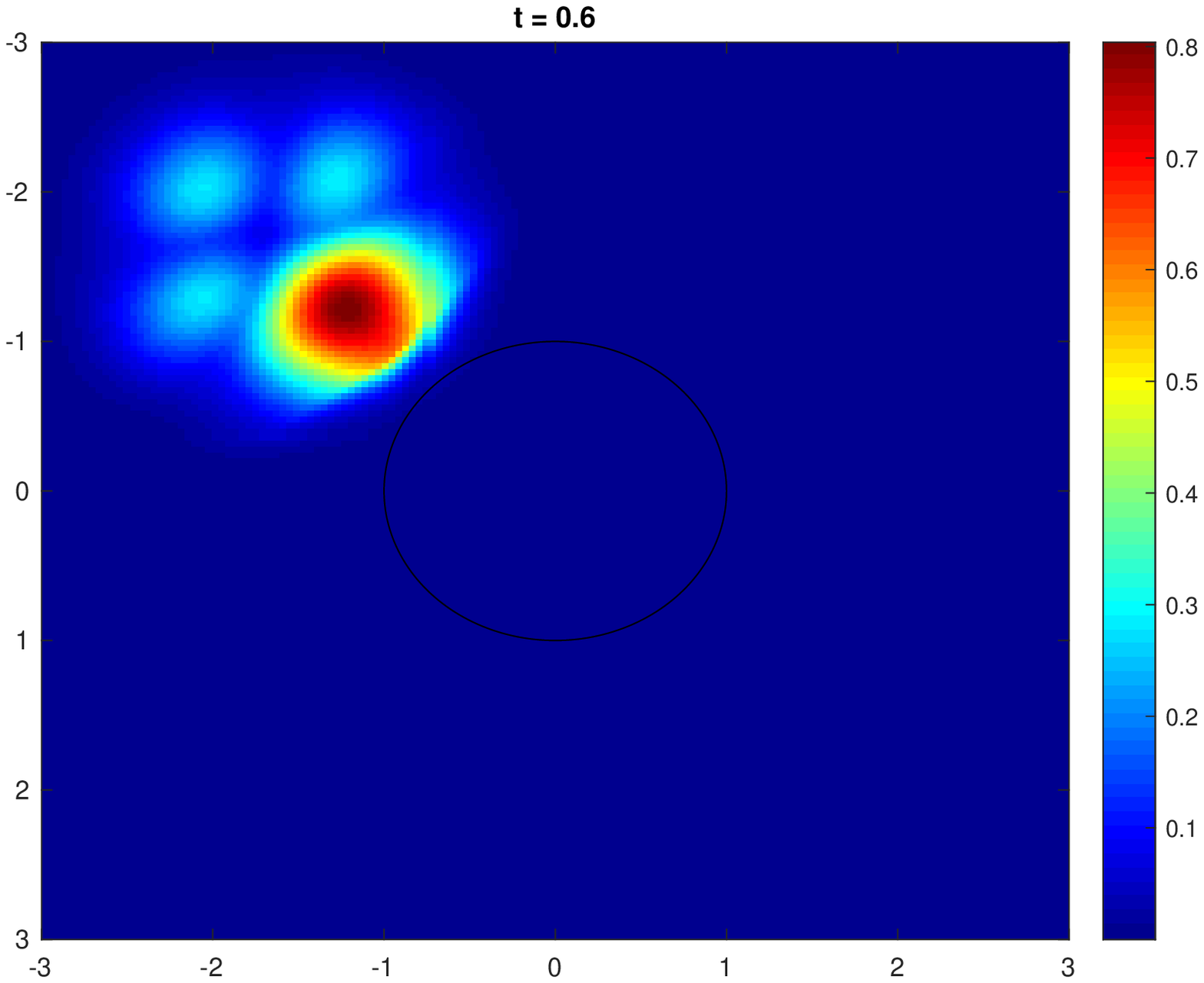}} \\
   \vskip -0.3truecm
       \scalebox{0.35}{\includegraphics{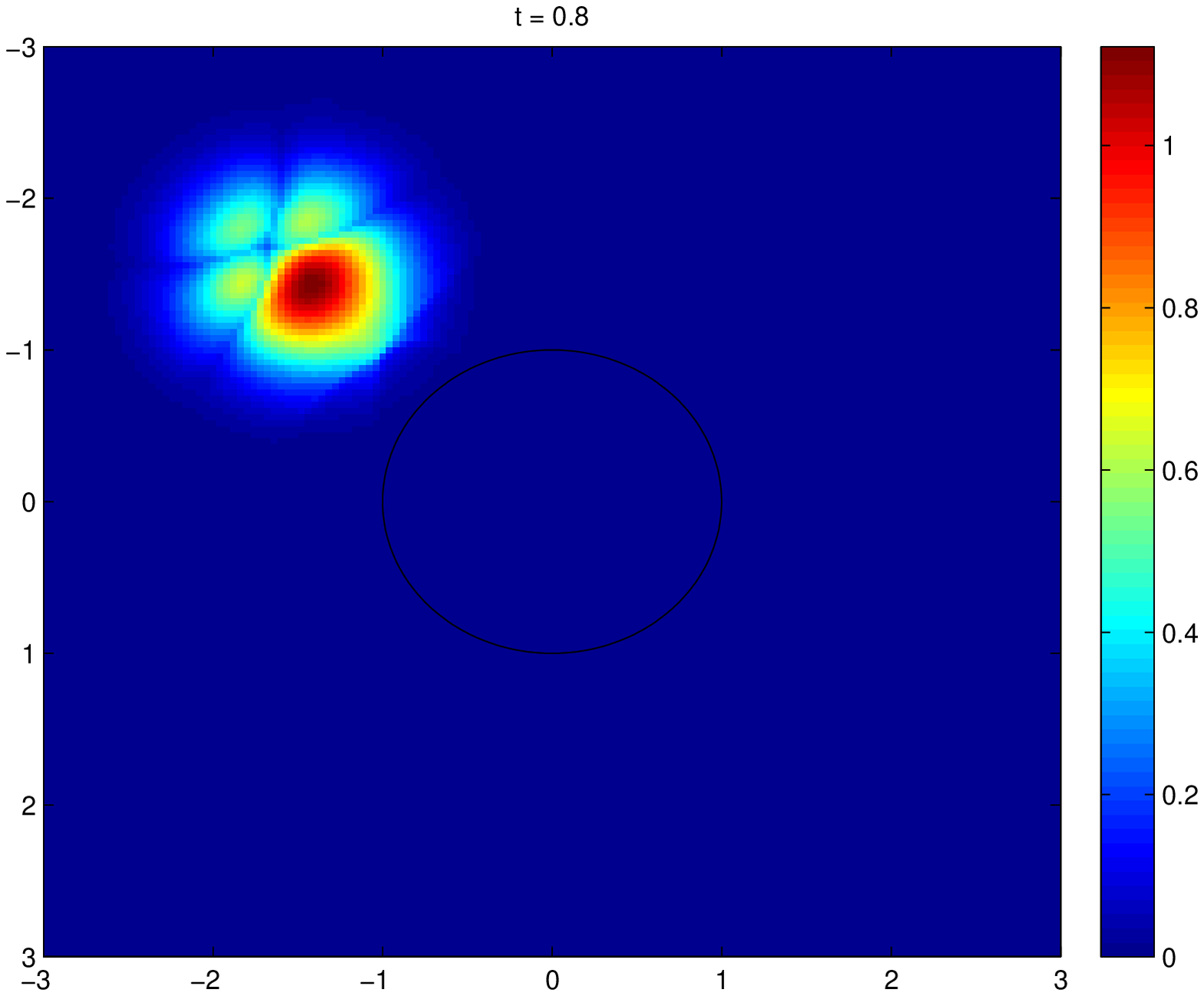}} 
         \scalebox{0.35}{\includegraphics{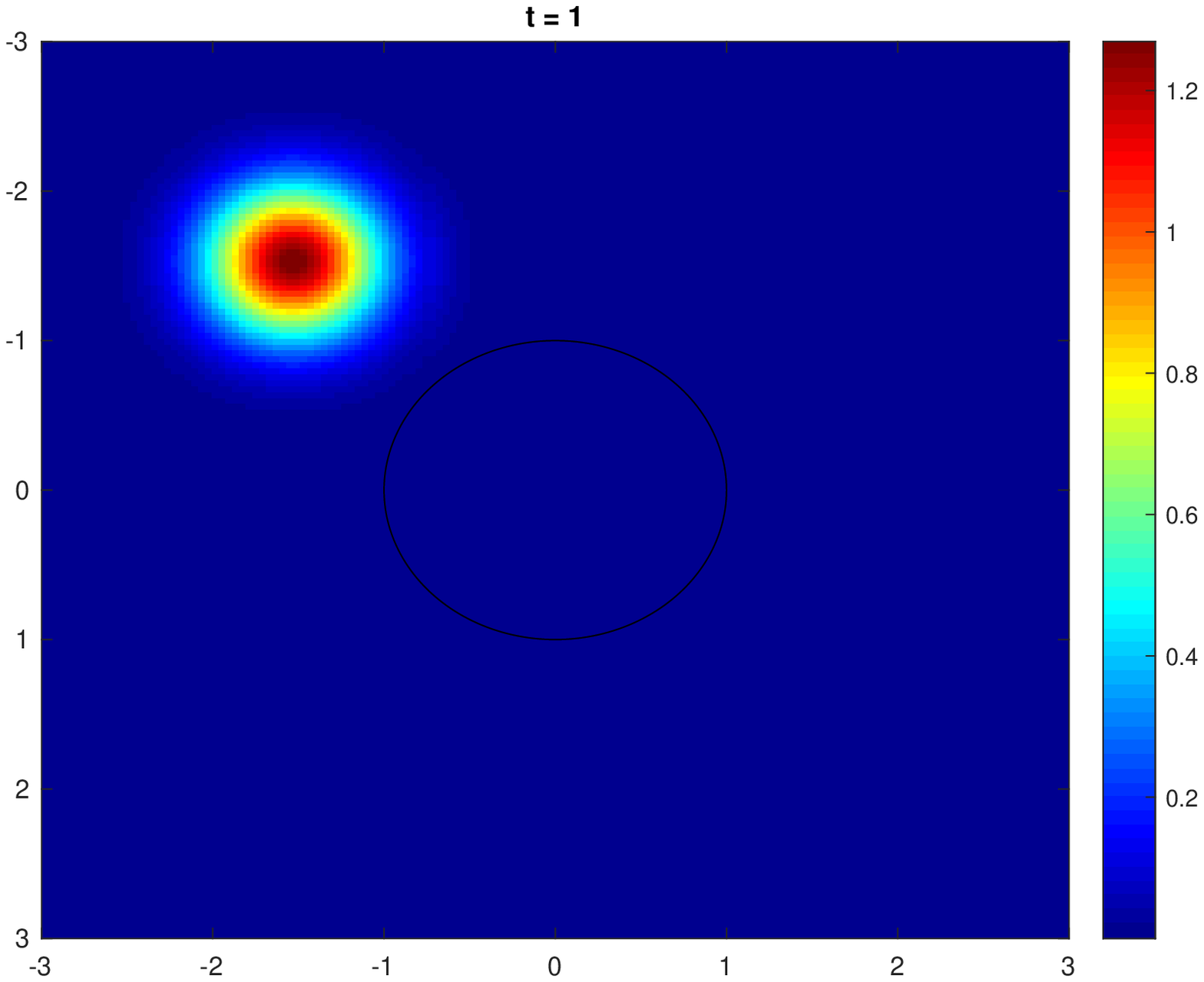}} \\
\caption{\small  The distribution $\rho(t,x) $ generating the convection on the mass for different $t = 0,0.2,0.4,0.6,0.8,1.0$. The black circle is the boundary of $B_1(0)$.}\label{exp_18}
     \end{center}
 \end{figurehere}

We identify reachability of the measure to the boundary of the ball w.r.t. to the $l_1$ Hamiltonian.
With our choice of regularization, we, however, see a defect in our numerical computation: there are three small tails that are left behind in the conservation law as the mass is moving since the exact cutoff of the absolute value is regularized.   Nonetheless, the solution makes perfect sense in term of identifying reachability.
\end{example}

\begin{example}
 In this example, we consider the non-convex Hamiltonian $H_2$ above and the input distribution $\rho(x)$ the same as in Example 1 in Figure \ref{exp_4}.

\noindent  Again, we choose the center and radius $(x_0 ,R)$, which helps to define $G(\rho)$ as $x_0 = (0,0), R = 1$. Figure \ref{exp_17} gives the optimizer $\tilde{\Phi}$ (left) in \eqref{Smart} and its gradient $\nabla_x \tilde{\Phi}$ (right)computed using {Algorithm 1} when $t = 1$ in the Hamiltonian.

\begin{figurehere}
     \begin{center}
     \vskip -0.3truecm
        \scalebox{0.4}{\includegraphics{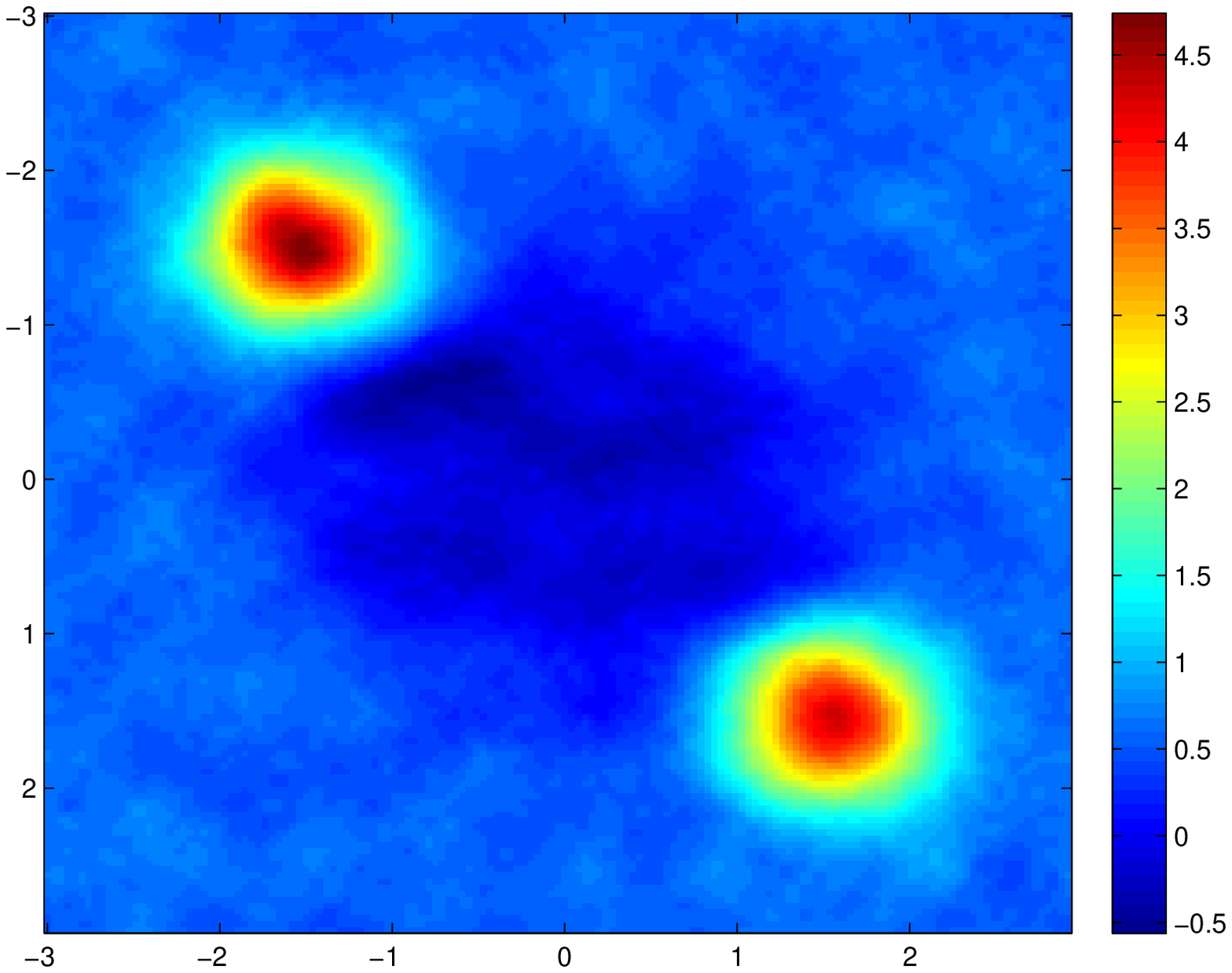}}
\scalebox{0.4}{\includegraphics{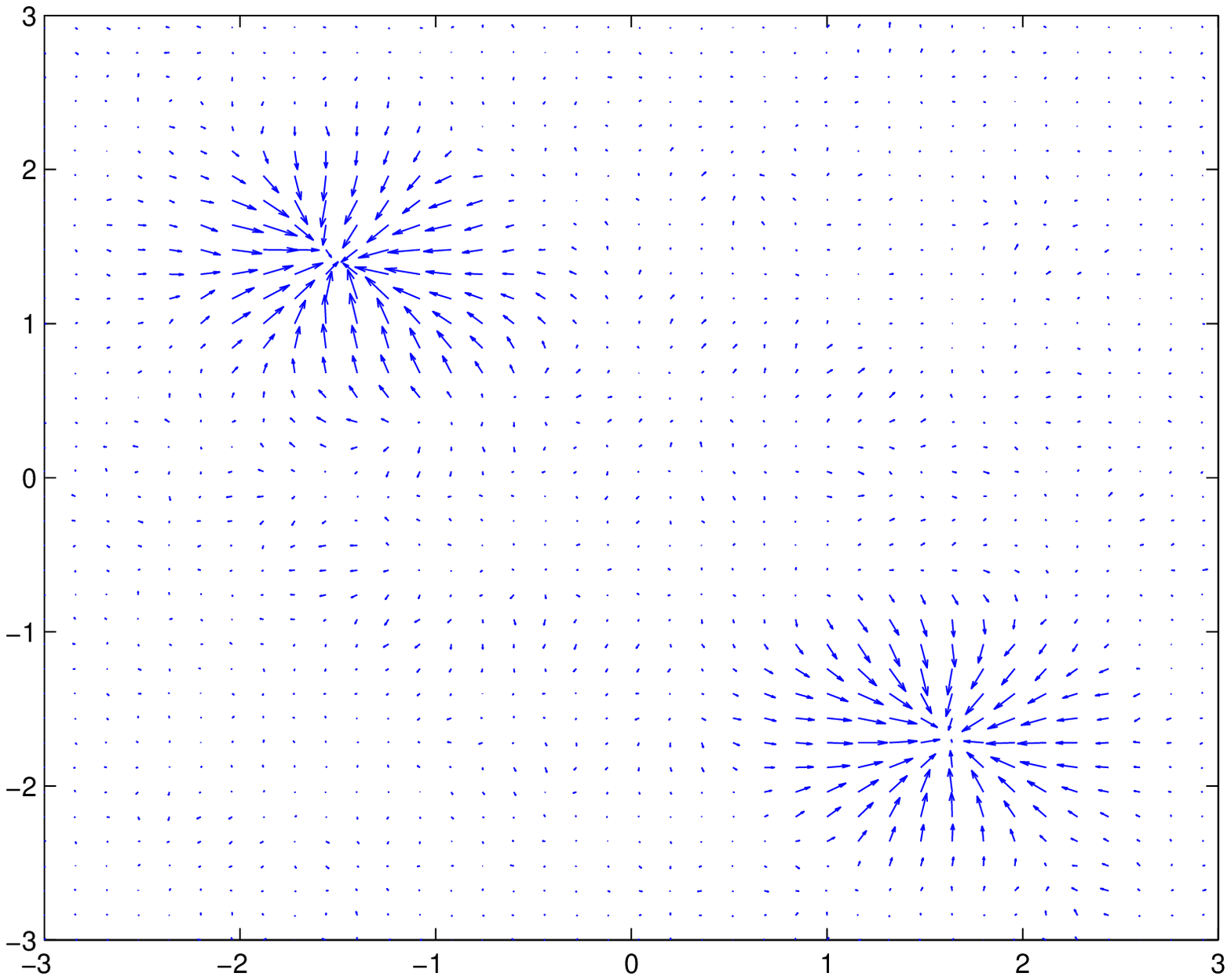}} \\
\caption{\small Left: optimizer $\tilde{\Phi}$ in $U(t,\rho)$ in \eqref{lala_hopf}, right: vector field $\nabla_x \tilde{\Phi}$.}\label{exp_17}
     \end{center}
 \end{figurehere}

\noindent Figure \ref{exp_18} plots the distributions $ \rho(t,x) $ for different $t = 0,0.2,0.4,0.6,0.8,1.0$.

\begin{figurehere}
     \begin{center}
     \vskip -0.3truecm
       \scalebox{0.35}{\includegraphics{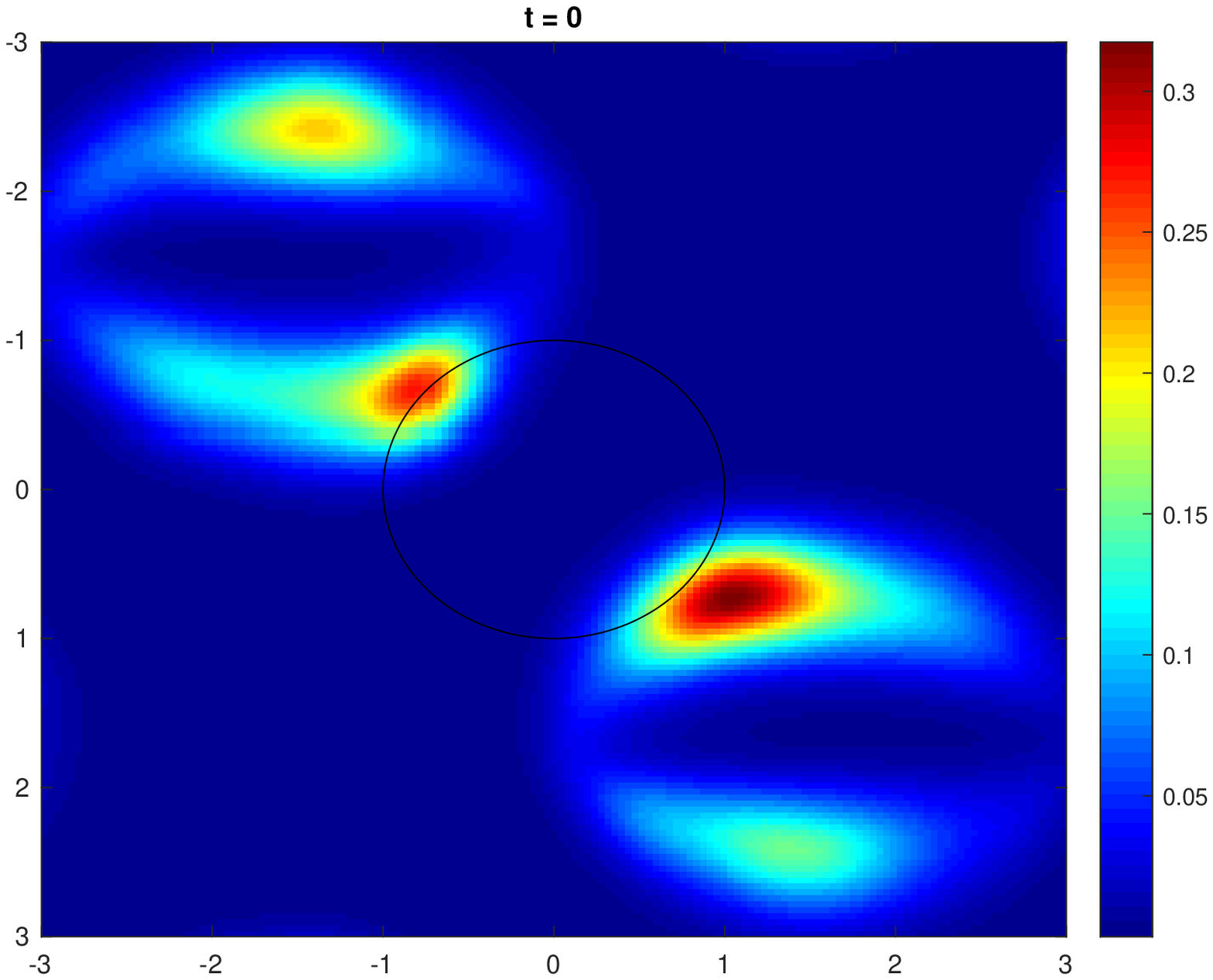}}
    \scalebox{0.35}{\includegraphics{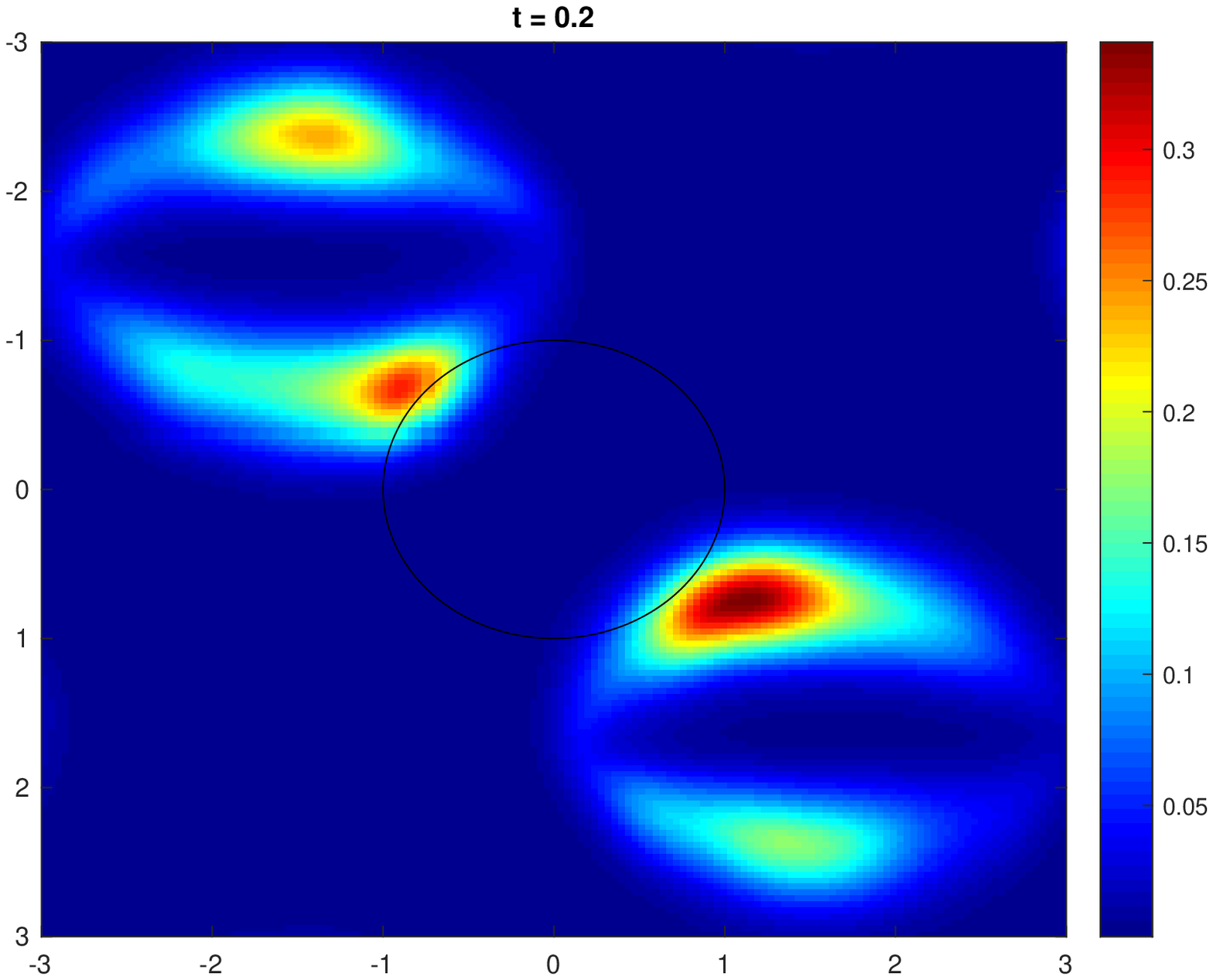}} \\
   \vskip -0.3truecm
       \scalebox{0.35}{\includegraphics{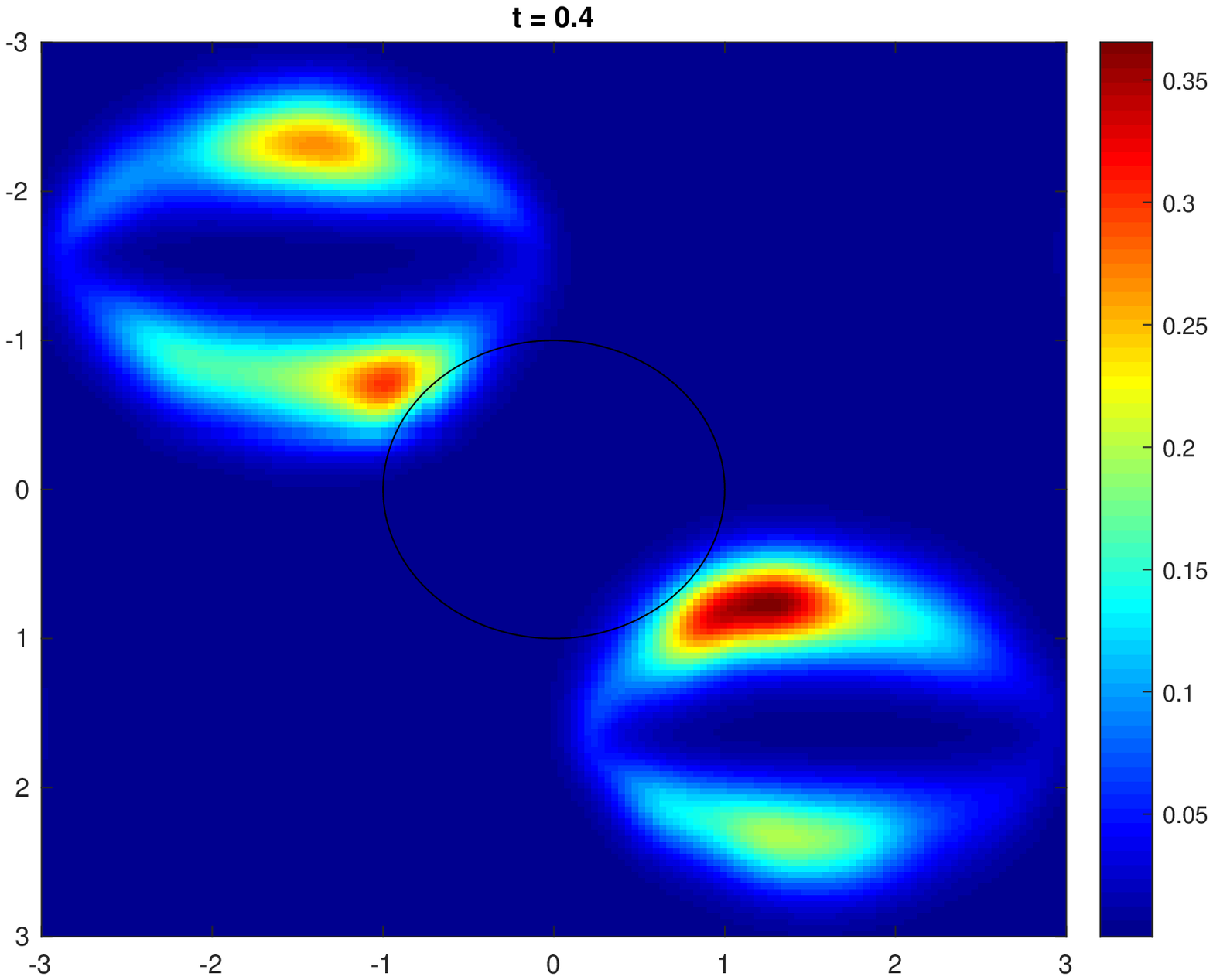}}
    \scalebox{0.35}{\includegraphics{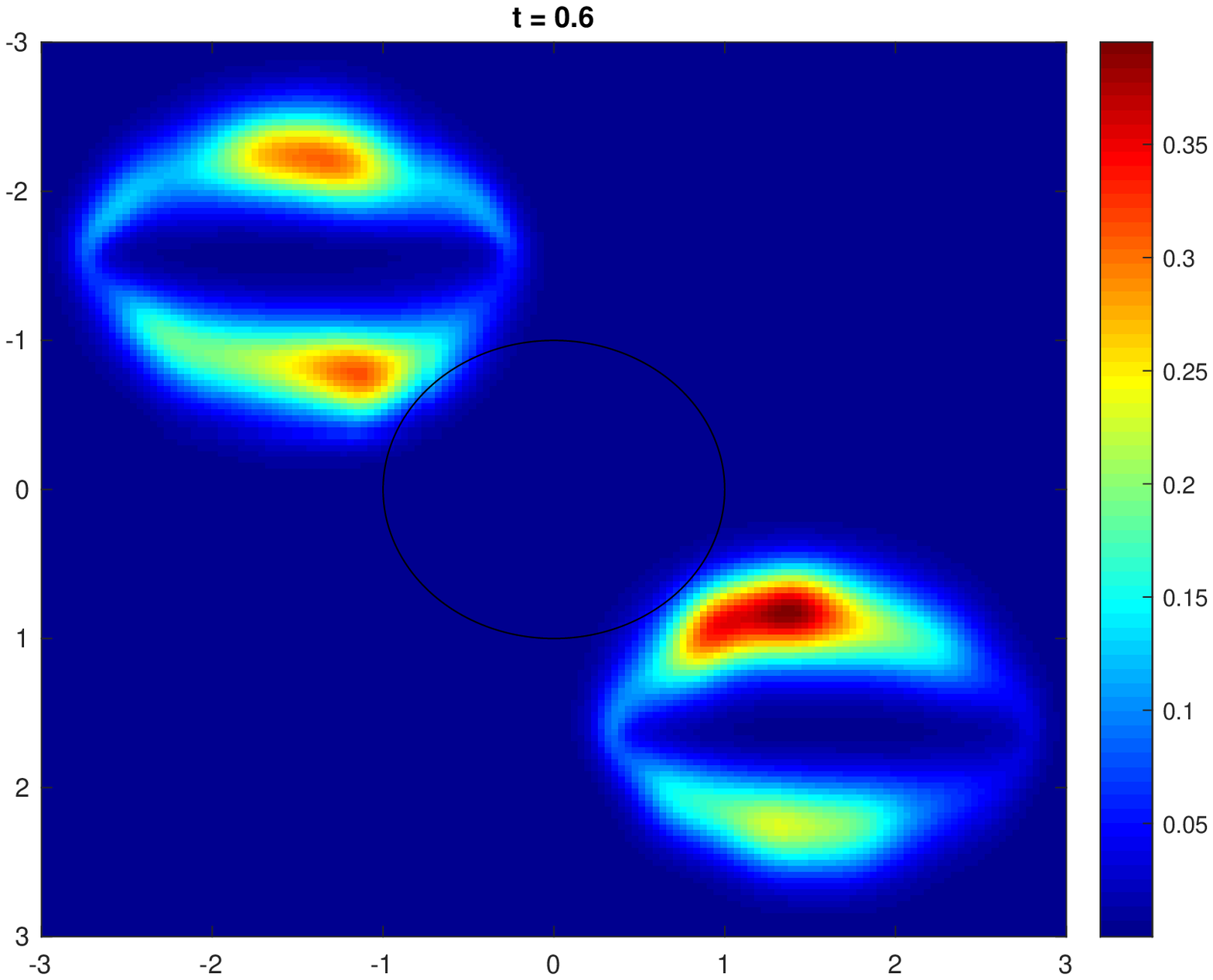}} \\
   \vskip -0.3truecm
       \scalebox{0.35}{\includegraphics{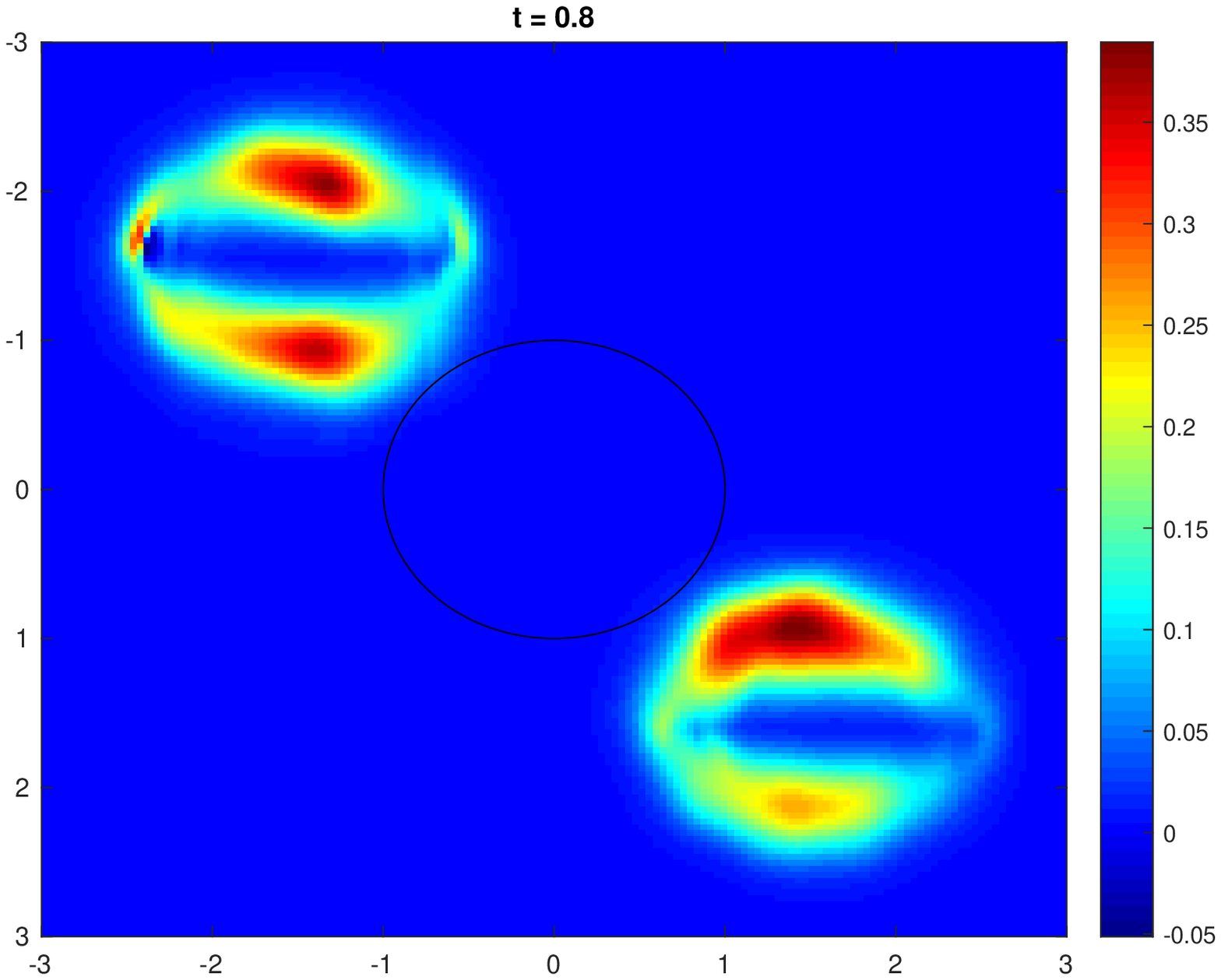}} 
         \scalebox{0.35}{\includegraphics{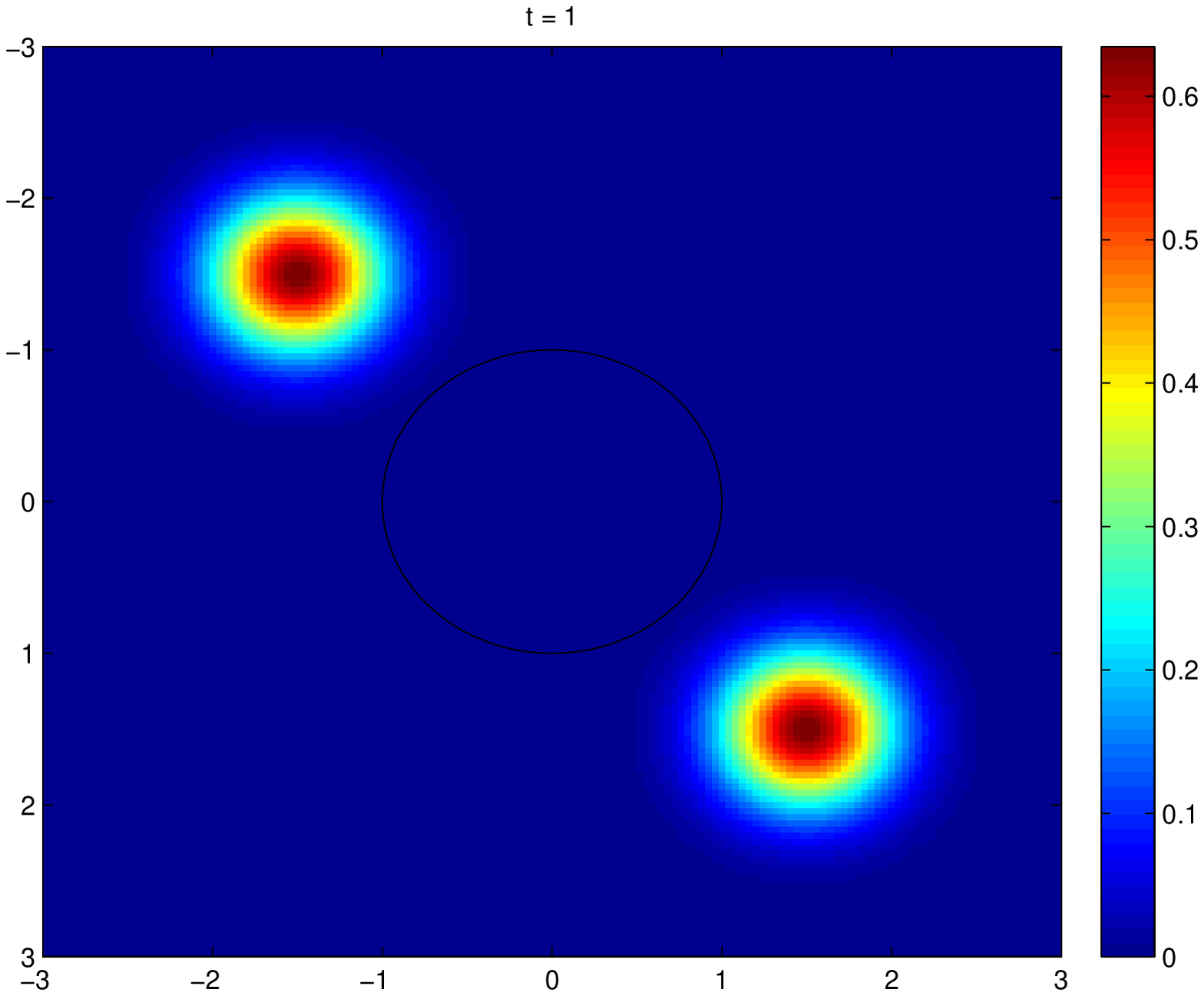}} \\
\caption{\small  The distribution $\rho(t,x) $ generating the convection on the mass for different $t = 0,0.2,0.4,0.6,0.8,1.0$. The black circle is the boundary of $B_1(0)$.}\label{exp_18}
     \end{center}
 \end{figurehere}

We can see the competing nature of the Hamiltonian, where one part of the Hamiltonian tries to drift the mass to the ball inward along one direction, while the other part of the Hamiltonian tries to drift the mass away along another direction.  This tears each mass lump apart into two lumps. Although the problem has not been fully understood mathematically, the numerical behavior of the solution shows the competing nature of a differential game problem in the mean field setting. 
\end{example}

 \section{Discussions}
 To summarize, we propose a generalized Hopf formula for potential mean field games. Our algorithm inherits main ideas in optimal transport on graphs and the Hopf formula for state-dependent optimal control problems. 
 
 Compared to the existing methods, the advantage of the proposed algorithm is three fold. First, the Hopf formula in density space introduces a minimization with variables depending on solely spatial grids. It has a lower complexity than the original optimal control problem. Second, the Hopf formula gives a simple parameterization for boundary problems in NE. This parameterization helps us design a simple first-order gradient descent method. This property allows us to compute the case of nonconvex Hamiltonians efficiently. Finally, our spatial discretization follows the dual of optimal transport on graphs. Hence, it is approximately discrete time reversible. This property conserves the primal-dual structure of potential mean field games.   
 
We notice that the Hopf formula in density space appears to {go beyond} monotonicity conditions and give legitimate numerical results, as shown in Section \ref{sec5}. 
Although it is beyond the scope of this paper, it is interesting to search for the precise conditions for the validity of the Hopf formula. Also, our current study only considers potential games without noise perturbations in players' decision processes. We will extend it to compute NEs for general non-potential games in future work. 
\bibliographystyle{abbrv}
\bibliography{HF}

\end{document}